\newcommand{\goaway}[1]{}
\newlist{deflist}{enumerate}{1}
\setlist[deflist]{label=(\arabic{deflisti}), ref=\thelemma.(\arabic{deflisti}),noitemsep}
\newtheorem{lemma}{Lemma}[subsection]
\newtheorem{theorem}[lemma]{Theorem}
\newtheorem{proposition}[lemma]{Proposition}
\newtheorem{corollary}[lemma]{Corollary}
\newtheorem{introthm}{Theorem}
\newtheorem{introcor}[introthm]{Corollary}
\theoremstyle{definition}
\newtheorem{example}[lemma]{Example}
\newtheorem{remark}[lemma]{Remark}
\newtheorem{definition}[lemma]{Definition}
\newtheorem{d-a}[lemma]{Definition-Assumption}
\newtheorem{question}[lemma]{Question}
\renewcommand{\P}{\mathscr{P}}      % poset
\newcommand{\Q}{\mathscr{Q}}        % poset / order ideal
\renewcommand{\L}{\mathscr{L}}		% lattice
\newcommand{\zero}{\hat{0}}			% minimum of poset
\newcommand{\one}{\hat{1}}			% maximum of poset
\newcommand{\A}{\mathscr{A}}		% arrangement
\newcommand{\B}{\mathscr{B}}        % arrangement
\newcommand{\qA}{\A/Y}              % quotient arrangement
\newcommand{\pr}{\bar{p}}           % projection map
\newcommand{\G}{\mathbb{G}}         % abelian Lie group
\newcommand{\Z}{\mathbb{Z}}			% integers
\newcommand{\R}{\mathbb{R}}			% real numbers
\newcommand{\C}{\mathbb{C}}         % complex numbers
\newcommand{\QQ}{\mathbb{Q}}        % rational numbers
\newcommand{\gp}{\mathfrak{G}}      % group
\DeclareMathOperator{\Hom}{Hom}
\DeclareMathOperator{\Conf}{Conf}
\DeclareMathOperator{\rk}{rk}
\DeclareMathOperator{\im}{im}
\DeclareMathOperator{\Hor}{Hor}
\DeclareMathOperator{\Bad}{Bad}
\DeclareMathOperator{\Aut}{Aut}
\newcommand{\tr}[1]{\max\P(\A_{#1})}
\newcommand{\st}{\colon}
\newcommand{\angles}[1]{\langle #1 \rangle}
\newcommand{\imp}[2]{\item[\eqref{#1}$\implies$\eqref{#2}]}
\newcommand{\equ}[2]{\item[\eqref{#1}$\iff$\eqref{#2}]}
\newcommand{\solidnodes}{\tikzstyle{every node}=
[draw,circle,fill=black,minimum size=4pt,inner sep=0pt]}
\newcommand{\defh}[1]{\textbf{#1}}
\newcommand{\defhno}[1]{\textbf{#1}}
\newcommand{\mylabel}[2]{#2\def\@currentlabel{#2}\label{#1}}
\newcommand\mynobreakpar{%
%\vspace{\baselineskip}
\par\nobreak\@afterheading} 
\title{
Supersolvable posets and fiber-type abelian arrangements
}
\author{Christin Bibby}
\address{Louisiana State University, Baton Rouge, LA, USA}
\email{\url{bibby@math.lsu.edu}}
\author{Emanuele Delucchi}
\address{University of Applied Arts and Sciences of Southern Switzerland}
\email{\url{emanuele.delucchi@supsi.ch}}
\subjclass[2020]{%Primary
06A07; 55R80%
%Secondary 52C35
}
\keywords{
hyperplane arrangement, 
configuration space, 
supersolvable lattice,
arrangement of submanifolds%
}
\begin{document}

\begin{abstract}
We present a combinatorial analysis of fiber bundles of generalized configuration spaces on connected abelian Lie groups.
These bundles are akin to those of Fadell--Neuwirth for configuration spaces, 
and their existence is detected by a combinatorial property of an associated finite partially ordered set. This is consistent with Terao's fibration theorem connecting  bundles of hyperplane arrangements to Stanley's lattice supersolvability.
We obtain a combinatorially determined class of $K(\pi,1)$ toric and elliptic arrangements. 
Under a stronger combinatorial condition, 
we prove a factorization of the Poincar\'e polynomial when the Lie group is noncompact. 
In the case of toric arrangements, this provides
an analogue of Falk--Randell's formula relating the Poincar\'e polynomial to the lower central series of the fundamental group.
\end{abstract}

\maketitle

\vspace{-2em}

\section{Introduction}

The fiber bundles studied by Fadell and Neuwirth \cite{FN} provide a fundamental tool in the study of configuration spaces on  manifolds. 
In the case of configurations of points in the plane $\C$,
these bundles can be generalized 
to complements of certain arrangements of hyperplanes in a complex vector space known as ``fiber-type'' \cite{FR}. 
Such arrangement complements
are $K(\pi,1)$s and exhibit a noteworthy relationship between the Poincar\'e polynomial and the lower central series of the fundamental group, first observed in the case of configuration spaces by Kohno \cite{kohno} and proved in general by Falk and Randell \cite{FR}. 
Terao \cite{teraofibthm} showed that a linear hyperplane arrangement is fiber-type if and only if the poset of intersections satisfies a purely combinatorial condition, which was defined by Stanley \cite{stanley} and motivated by the structure of the lattice of subgroups in a supersolvable group.

Much less is known for generalized configuration spaces on manifolds other than Euclidean space.
An {\em abelian  arrangement}  
is a finite set of subgroup cosets in a connected abelian Lie group (\Cref{def:alga}). The complement of the union of all elements of the arrangement inside the ambient Lie group is the manifold of interest here. 
An arrangement is called {\em fiber-type} if either it consists of single points, or if its complement fibers over the complement of a lower-dimensional fiber-type arrangement. The resulting tower of fibrations generalizes the one associated to configuration spaces.

The combinatorial data of an arrangement is the \emph{poset of layers}, i.e., the set 
of all connected components of intersections of elements of the arrangement,
partially ordered by inclusion. The significance of this poset was first recognized by Zaslavsky \cite{TomZ}.
In this context we are able to define a notion of supersolvability (\Cref{def:ss}) based on a suitable extension of the concept of modularity in lattices (\Cref{def:modideal}). A key difference here is that, for some topological consequences, we need a stronger condition which we call \emph{strict supersolvability} (\Cref{def:sss}). 

\begin{introthm}[Fibration Theorem, \Cref{thm:ft=ss} and \Cref{thm:FN}]\label{thm:A}
An essential abelian arrangement is fiber-type if and only if its poset of layers is supersolvable. 
If the poset of layers is strictly supersolvable then the arrangement bundles can be pulled back from Fadell--Neuwirth's bundles of configuration spaces.
\end{introthm}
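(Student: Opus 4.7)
The plan is to prove both halves of \Cref{thm:A} by induction on the rank of the arrangement, with the base case being rank-zero arrangements (i.e.\ finite sets of points in a zero-dimensional ambient group), which are trivially both fiber-type and supersolvable.

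For the forward direction of the equivalence (fiber-type implies supersolvable), I would assume that the complement of $\A$ in $G$ fibers over the complement of an arrangement $\A'$ in a quotient $G' = G/K$ of lower dimension, with fiber the complement of an arrangement $\A''$ inside the kernel $K$. By the inductive hypothesis, $\P(\A')$ is supersolvable. The short exact sequence $1 \to K \to G \to G' \to 1$ controls how layers of $\A$ decompose in terms of layers of $\A'$ and $\A''$, and I would use this decomposition to lift a modular ideal in $\P(\A')$ to a modular ideal in $\P(\A)$ by appending the layers contributed by $\A''$. The verification of the modularity axiom from \Cref{def:modideal} then reduces to a direct computation of meets and joins of layers through the exact sequence.

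For the reverse direction (supersolvable implies fiber-type), given a modular ideal in $\P(\A)$ whose complement contains a single maximal layer, I would extract from this combinatorial datum a distinguished closed subgroup $K \leq G$ and consider the projection $G \twoheadrightarrow G/K$. The goal is to show that this projection restricts on complements to a locally trivial bundle: the base should be the complement of an abelian arrangement in $G/K$ with supersolvable (by removing one step from the modular chain) poset of layers, and hence fiber-type by induction, while each fiber should be $K$ minus a finite subset of constant cardinality. Local triviality of this projection is the main obstacle; the combinatorial condition of modularity is tailored precisely so that the induced arrangement on each fiber is combinatorially constant, but turning this into a geometric trivialization requires care, especially in the toric and elliptic cases where nontrivial monodromy can arise over loops in the base.

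For the second claim, strict supersolvability of \Cref{def:sss} strengthens the preceding step by forcing the kernel $K$ at each stage of the tower to be a standard one-dimensional factor of $G$ (a copy of $\C$, $\C^*$, or an elliptic curve) and the finite fiber point-set to be of constant size. This identifies each fiber with a fiber of the classical Fadell--Neuwirth bundle for configuration spaces of the corresponding one-dimensional Lie group. Inductively choosing compatible reference points, I would define classifying maps from each stage of the arrangement tower to the corresponding stage of the Fadell--Neuwirth tower, and verify via the universal property of pullbacks that the resulting squares are Cartesian, assembling into the claimed global pullback diagram.
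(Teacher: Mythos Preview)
Your inductive skeleton for the first equivalence matches the paper's, but the proposal for the second claim rests on a misidentification of what strict supersolvability provides. You write that the TM condition ``forces the kernel $K$ at each stage of the tower to be a standard one-dimensional factor of $\G$'' and ``the finite fiber point-set to be of constant size''. Both of these already hold under ordinary supersolvability: the admissible subgroup $Y$ is a copy of $\G$ by construction (\Cref{def:Y}), and the fiber over every point of $M(\qA)$ is $\G$ minus exactly $\ell=\sum_{\alpha\notin\A_Y} c(\alpha)^d$ points by \Cref{lem:tgood} and \Cref{thm:fib}. If your argument were correct it would show that \emph{every} supersolvable arrangement bundle pulls back from Fadell--Neuwirth, contradicting \Cref{ex:MnotTM} and \Cref{rem:monodromy}. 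What the TM condition $\vert a\vee y\vert=1$ actually gives (\Cref{rem:TM}) is that each connected component $H_{\alpha,j}$ of a hypersurface with $\alpha\notin\A_Y$ meets every translate of $Y$ in a single point, so the covering $p\vert_{H_{\alpha,j}}$ of \Cref{cor:cover} is in fact a homeomorphism onto $T/Y$. Its inverse continuously tracks one puncture of the fiber, and the product of these inverses is the map $g:M(\qA)\to\Conf_\ell(\G)$ along which the Fadell--Neuwirth bundle is pulled back. When the M-ideal is not a TM-ideal, some component has disconnected intersection with $Y$, the covering has nontrivial monodromy, and no such continuous $g$ exists.

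A smaller issue in the first equivalence: the phrase ``a modular ideal in $\P(\A)$ whose complement contains a single maximal layer'' does not describe the corank-one M-ideal $\Q_{n-1}$, which typically has many maximal elements (the translates of $Y$). The paper singles out the unique $Y\in\max(\Q_{n-1})$ passing through the identity of $T$ (this uses essentiality) and then proves $\P(\A_Y)=\Q_{n-1}$. You also correctly flag local triviality as the main obstacle but offer no mechanism for it; the paper's is the equivalence in \Cref{prop:Ygood} between $\P(\A_Y)$ being a corank-one M-ideal and the vanishing of the bad set $\Bad$, which is what drives the explicit local trivialization in the proof of \Cref{thm:fib}.
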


The latter part of \Cref{thm:A} was first observed by Cohen \cite{cohen} in the case of hyperplane arrangements. 
This allows one to pull back properties of the bundle, such as a description of monodromy or a section of the configuration space bundle (the latter exists for all connected abelian Lie groups, see \Cref{cor:section}).
We leave open these questions when an arrangement is not strictly supersolvable (\Cref{q:monodromy}).

As a motivating example for \Cref{thm:A} we consider Dowling posets as defined by the first author and Gadish in \cite{BG}. 
These arise in the study of orbit configuration spaces -- an equivariant analogue to ordinary configuration spaces -- and generalize the lattices of Dowling \cite{dowling}.
Xicot\'encatl \cite{X} established an equivariant analogue of Fadell--Neuwirth's bundles for orbit configuration spaces, corresponding to the fact that Dowling posets are supersolvable (\Cref{prop:dowling}).

\medskip

Since the work of Brieskorn and Deligne \cite{Brieskorn,Deligne}, a long standing problem in arrangement theory is to classify which arrangement complements are $K(\pi,1)$. As a corollary to \Cref{thm:A}, we obtain a combinatorially determined class of $K(\pi,1)$ toric and elliptic arrangements (i.e., when the Lie group is $\mathbb C^\times$ or $(S^1)^{2}$). 
Using the aforementioned bundle section for strictly supersolvable arrangements, this also yields a description of the fundamental group.

\begin{introcor}[\Cref{cor:kpi1} and \Cref{cor:pi1}]
If the poset of layers of a linear, toric, or elliptic arrangement is supersolvable, then the arrangement complement is a $K(\pi,1)$ space.
If the poset is strictly supersolvable, then the fundamental group is an iterated semidirect product of free groups.
\end{introcor}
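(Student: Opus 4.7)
The plan is to apply the Fibration Theorem (\Cref{thm:A}) to reduce the statement to an inductive argument on the length of the fiber-type tower, then analyze each stage with the long exact sequence in homotopy.

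By \Cref{thm:A}, supersolvability of the poset of layers yields a tower of fiber bundles
\[
M=M_r \longrightarrow M_{r-1} \longrightarrow \cdots \longrightarrow M_1,
\]
in which each map is a locally trivial fibration whose fiber is the complement of a finite set of points in a one-dimensional connected abelian Lie group $G_1$ -- namely $\C$, $\C^\times$, or an elliptic curve $E$, according to the case. The base $M_1$ is itself a complement of this form.

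For the $K(\pi,1)$ claim, I would first check that the complement of any finite set of points in $\C$, $\C^\times$, or $E$ is aspherical: each of these one-dimensional groups is already a $K(\pi,1)$ (with $\pi_1$ equal to $1$, $\Z$, or $\Z^2$), and removing finitely many points preserves asphericity since the result is a non-compact surface with contractible universal cover. An induction up the tower then invokes the long exact sequence in homotopy: if $F \to X \to B$ is a fiber bundle with $F$ and $B$ aspherical, then $\pi_i(X)=0$ for all $i \geq 2$, so $X$ is a $K(\pi,1)$. Climbing from $M_1$ to $M_r=M$ gives the first statement.

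For the fundamental group part, strict supersolvability activates the second half of \Cref{thm:A}: each bundle in the tower is pulled back from a Fadell--Neuwirth configuration-space bundle on $G_1$. Such bundles admit sections (\Cref{cor:section}) and pullbacks preserve sections, so each $M_i \to M_{i-1}$ has a section. The associated short exact sequence of fundamental groups therefore splits,
\[
1 \longrightarrow \pi_1(F_i) \longrightarrow \pi_1(M_i) \longrightarrow \pi_1(M_{i-1}) \longrightarrow 1,
\]
and each fiber $F_i$ deformation retracts onto a wedge of circles, hence has free fundamental group. Iterating the split extensions exhibits $\pi_1(M)$ as an iterated semidirect product of free groups.

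The step I expect to demand the most care is, in the elliptic case, verifying that every fiber $F_i$ really has at least one puncture -- otherwise $\pi_1(F_i)=\Z^2$ would not be free and the iterated semidirect product description would fail. This nondegeneracy should follow either from the concrete structure of the Fadell--Neuwirth pullback (which removes at least one point at each stage by construction) or, equivalently, from an essentiality assumption guaranteeing that the strictly supersolvable filtration is nontrivial at every step. By contrast, the asphericity argument for the $K(\pi,1)$ claim is uniform across the three cases and does not rely on free fundamental groups at each stage.
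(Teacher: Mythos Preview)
Your proposal is correct and follows essentially the same route as the paper: induction along the fiber-type tower, the homotopy long exact sequence for asphericity, and \Cref{cor:section} for the splitting that yields the iterated semidirect product. Your flagged concern about the elliptic fibers having at least one puncture is legitimate and is handled exactly as you suggest---by the standing essentiality assumption (so that in rank one $\A$ is nonempty, and at each stage the corank-one M-ideal is proper, forcing $\A\setminus\A_Y\neq\emptyset$ and hence $\ell\geq 1$).
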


On the purely combinatorial side, we give an abstract definition of a {\em geometric poset}  (\Cref{def:geom}) that seems to provide the right level of generality for a study of posets of layers of arrangements, and that reduces to the well-known notion in the case of (semi)lattices, see \cite{WW}.
Geometric posets support an equivalent definition of supersolvability (\Cref{thm:niceM}) which, for posets of layers of affine hyperplane arrangements, agrees with the definition 
given by Falk and Terao \cite{falk-terao}.
We prove that a geometric semilattice is supersolvable if and only if its canonical  extension to a geometric lattice is supersolvable (\Cref{thm:sscone}). 
The topological consequence of this is an affine analogue of Terao's fibration theorem (\Cref{thm:affinefib}).

Stanley \cite{stanley} proved that the characteristic polynomial of a supersolvable lattice has positive integer roots, and we observe the same phenomenon for strictly supersolvable posets.
Through a formula of Orlik and Solomon \cite{OS} for complex hyperplane arrangements and Liu, Tran, and Yoshinaga \cite{LTY} for noncompact abelian Lie group arrangements, this yields a factorization of the Poincar\'e polynomial for arrangement complements when the poset of layers is strictly supersolvable.

\begin{introthm}[Polynomial Factorization, \Cref{thm:charpoly} and \Cref{cor:poincare}]
Let $\P$ be a strictly supersolvable poset. Then there is a partition $A_1\sqcup\cdots\sqcup A_n$ of the atoms of $\P$ such that the characteristic polynomial of $\P$ factors as
\[ \chi_\P(t) = \prod_{i=1}^n (t-|A_i|).\]
If $\P$ is the poset of layers for an essential arrangement in $\G^n$ where $\G=(S^1)^d\times\R^v$ with $v>0$, then 
the Poincar\'e polynomial of the arrangement complement is
\[\operatorname{Poin}(t) = \prod_{i=1}^n \left((1+t)^d+|A_i|t^{d+v-1}\right).\]
\end{introthm}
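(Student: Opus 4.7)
The plan is to handle the two assertions in sequence. The polynomial factorization of $\chi_\P$ is the combinatorial heart, and the Poincar\'e polynomial formula follows by substituting into the Liu--Tran--Yoshinaga formula.

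For the factorization of $\chi_\P$, I would mirror Stanley's classical argument for supersolvable geometric lattices, proceeding by induction on the rank $n$ of $\P$. Strict supersolvability supplies a maximal chain $\zero = x_0 < x_1 < \cdots < x_n = \one$ whose elements satisfy the stronger modularity condition of \Cref{def:sss}. For each $i$, let $A_i$ denote the set of atoms of $\P$ that lie below $x_i$ but not below $x_{i-1}$; because $\P$ is geometric (\Cref{def:geom}) and the chain is saturated, these sets partition the atom set of $\P$. The key reduction is a factorization lemma of the form
\[ \chi_\P(t) = \chi_{\P_{\leq x_{n-1}}}(t) \cdot (t - |A_n|), \]
which, applied iteratively along the chain, produces $\chi_\P(t) = \prod_i (t - |A_i|)$. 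To establish this lemma I would decompose each $y \in \P$ canonically as $y = y' \vee a$ with $y' \leq x_{n-1}$ and $a \in A_n \cup \{\zero\}$, then compute $\chi_\P(t) = \sum_y \mu(\zero, y)\, t^{n - \rk y}$ by showing that both the rank function and the M\"obius function behave multiplicatively along this decomposition. The word ``strict'' in \emph{strictly supersolvable} should be precisely what forces this compatibility: ordinary supersolvability gives the factorization when $\P$ is a lattice, while the stronger condition compensates for the possible failure of globally defined joins in the more general semilattice setting.

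For the Poincar\'e polynomial, I would apply the formula of \cite{LTY} for essential arrangements in $\G^n$ with $\G = (S^1)^d \times \R^v$, $v > 0$. Specialized to our setting, their expression takes the form
\[ \operatorname{Poin}(t) = (-t^{d+v-1})^n \, \chi_\P\!\left(-\frac{(1+t)^d}{t^{d+v-1}}\right). \]
Substituting the factorization $\chi_\P(t) = \prod_i (t - |A_i|)$ and distributing one factor of $-t^{d+v-1}$ into each factor of the product yields
\[ \operatorname{Poin}(t) = \prod_{i=1}^n \bigl((1+t)^d + |A_i|\, t^{d+v-1}\bigr), \]
as claimed. As a consistency check, the specialization $d=0$, $v=2$ (complex linear arrangements) recovers the Falk--Randell product formula via Orlik--Solomon, while $d=v=1$ (toric) recovers the known toric factorization.

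The main obstacle is the factorization lemma for $\chi_\P$. In a geometric lattice, modularity of $x_{n-1}$ produces a direct decomposition of the flat poset that translates cleanly into a M\"obius identity. In the geometric-poset setting of this paper, one must verify that the extra flexibility --- lack of a top element, potential failure of joins, and the use of modular \emph{ideals} rather than elements --- does not obstruct this decomposition. Pinning down the right avatar of strict modularity and executing the resulting M\"obius computation is where the real work lies; the passage to Poincar\'e polynomials is then only a substitution.
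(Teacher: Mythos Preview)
Your high-level strategy matches the paper's: prove a one-step factorization $\chi_\P(t) = \chi_\Q(t)\,(t-a)$ for a corank-one TM-ideal $\Q$, then iterate along the chain. Your treatment of the Poincar\'e polynomial via the Liu--Tran--Yoshinaga substitution is exactly what the paper does and is correct.

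However, your setup for the combinatorial part is confused in a way that matters. You write that strict supersolvability gives a chain of \emph{elements} $\zero = x_0 < \cdots < x_n = \one$, but $\P$ need not have a maximum, nor even be a semilattice; the data of strict supersolvability is a chain of \emph{TM-ideals} $\Q_0 \subset \cdots \subset \Q_n = \P$, and the blocks are $A_i = A(\Q_i) \setminus A(\Q_{i-1})$, not atoms below some element. You acknowledge this discrepancy in your final paragraph but never reconcile it with the argument you actually sketched, which is written entirely in terms of the nonexistent $x_{n-1}$.

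More substantively, your proposed route to the factorization lemma --- a global ``canonical'' decomposition $y = y' \vee a$ with $y' \in \Q_{n-1}$ and $a \in A_n \cup \{\zero\}$, followed by multiplicativity of $\mu$ --- does not go through as stated. The decomposition is not canonical in $a$ (for $x \in \P \setminus \Q$ there is a whole set $A_x = A(\P_{\leq x}) \setminus A(\Q)$ of such atoms), and there is no product-type isomorphism giving M\"obius multiplicativity. The paper instead works \emph{locally}: for each $x \in \P \setminus \Q$, \Cref{prop:localmod} and \Cref{rem:uniquey} produce a unique $y \in \Q$ with $y \lessdot x$ which is modular in the geometric lattice $\P_{\leq x}$, so Stanley's classical theorem applies inside $\P_{\leq x}$ and comparing constant terms gives $\mu_\P(x) = -\mu_\Q(y)\,|A_x|$. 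The TM-ideal condition $|a \vee y| = 1$ is precisely what ensures that, for each fixed $y \in \Q$, the sets $A_x$ over all $x \gtrdot y$ with $x \notin \Q$ partition $A(\P) \setminus A(\Q)$; summing over $y$ then collapses to $\chi_\Q(t)(t-a)$. You correctly located where ``strict'' enters, but the mechanism you describe is not the one that works.
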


The poset of layers of every abelian Lie group arrangement $\A$ is the quotient of a geometric semilattice by the action of a free abelian group (see \Cref{ex:UC} for a precise statement). 
We prove that a geometric semilattice is supersolvable if and only if its quotient by a suitable group action is supersolvable (\Cref{thm:quotient}).
Topologically, this relates the fiber-type property of an abelian arrangement complement with that of a covering space (\Cref{cor:coverdown} and \Cref{cor:coverup}). Combinatorially, this shows that our notion of supersolvability is a natural extension of the classical one for matroids to the context of group actions of semimatroids, see \cite{dD}.

\medskip

Now consider a noncompact abelian Lie group and an arrangement bundle for which the algebraic monodromy is trivial. 
This includes all strictly supersolvable toric arrangements (\Cref{rmk:toricmonodromy}). In fact, strict supersolvability is necessary for the monodromy to be trivial, and we obtain a tensor decomposition of the cohomology algebra for such arrangements (\Cref{thm:monodromy}) which can in principle be expressed combinatorially. 
Finally, for toric arrangements, a combination of our 
results about 
Poincar\'e polynomial factorization, existence of a section, and trivial monodromy yields a formula akin to Falk and Randell's \cite{FR} lower central series formula.

\begin{introthm}[Lower Central Series Formula for Toric Arrangements, Thm.\ \ref{thm:LCS}]
Let $\A$ be a strictly supersolvable toric arrangement and let $A_1\sqcup\cdots\sqcup A_n$ be the induced partition of the atoms in its poset of layers. For $j\geq 1$, let $\varphi_j$ be the rank of the $j$th successive quotient in the lower central series of the fundamental group of the complement of $\A$. Then
\[\prod_{j=1}^\infty (1-t^j)^{\varphi_j} = \prod_{i=1}^n (1-(|A_i|+1)t).\]
\end{introthm}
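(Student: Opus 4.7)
The plan is to combine the consequences of strict supersolvability already established in the paper with the classical algebraic argument of Falk and Randell \cite{FR}. First, I would use \Cref{thm:A} to obtain that $M(\A)$ is the total space of an iterated fiber bundle pulled back from the Fadell--Neuwirth tower of ordered configuration spaces of $\C^\times$, each stage admitting a section by \Cref{cor:section}. At the $i$-th stage the fiber is $\C^\times$ with $|A_i|$ points removed, which is homotopy equivalent to a wedge of $|A_i|+1$ circles and hence has fundamental group the free group $F_{|A_i|+1}$. The split long exact sequences of homotopy groups then assemble into an iterated semidirect product
\[
\pi_1(M(\A)) \;\cong\; F_{|A_n|+1} \rtimes \bigl(F_{|A_{n-1}|+1} \rtimes \cdots \rtimes F_{|A_1|+1}\bigr).
\]

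Next, by \Cref{rmk:toricmonodromy}, strict supersolvability in the toric setting guarantees that the algebraic monodromy of each bundle in the tower is trivial. Since $H^1$ of a wedge of circles coincides with the abelianization of its fundamental group, trivial monodromy on $H^1$ is equivalent to triviality of the induced action on $F_{|A_i|+1}^{\mathrm{ab}}$, which is precisely the \emph{almost direct} hypothesis under which Falk and Randell establish their LCS formula. Applying their algebraic theorem to the iterated semidirect product above, with $c_i = |A_i|+1$, then yields
\[
\prod_{j\geq 1}(1-t^j)^{\varphi_j} \;=\; \prod_{i=1}^n \bigl(1-(|A_i|+1)\,t\bigr),
\]
as desired. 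The Poincar\'e polynomial factorization of \Cref{cor:poincare} serves as an independent consistency check via the substitution $t\mapsto -t$, since both sides above should equal $\operatorname{Poin}_{M(\A)}(-t)$.

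The substantial work has already been carried out in establishing Theorem~\ref{thm:A}, the section of \Cref{cor:section}, and the triviality of the monodromy for strictly supersolvable toric arrangements. The main subtle point in the present proof is the verification that the trivial cohomological monodromy of \Cref{rmk:toricmonodromy} is exactly the almost-directness condition on the iterated semidirect product; this hinges on identifying $H^1$ of each fiber with the abelianization of its $\pi_1$, which is immediate here because the fibers are wedges of circles. Once this translation is made, the LCS formula reduces to a direct invocation of the Falk--Randell machinery.
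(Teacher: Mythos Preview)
Your proposal is correct and follows essentially the same route as the paper: both use the tower of split fibrations coming from strict supersolvability (\Cref{cor:kpi1}, \Cref{cor:section}) to reduce to Falk--Randell's algebraic splitting of lower central series quotients, and both identify the fiber at each stage as $\C^\times$ minus $|A_i|$ points. The paper organizes this as an induction on $n$, applying \cite[Cor.~3.6]{FR} once per step, whereas you set up the full iterated semidirect product and invoke Falk--Randell globally; these are equivalent.

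One point worth highlighting is that you are more careful than the paper's own proof about the \emph{almost-direct} hypothesis in \cite{FR}. The paper simply cites \cite[Cor.~3.6]{FR} after obtaining the split short exact sequence, without explicitly verifying that $\pi_1(M(\qA))$ acts trivially on $\pi_1(F)^{\mathrm{ab}}$. You correctly supply this verification: by \Cref{thm:FN} the bundle is pulled back from the Fadell--Neuwirth bundle of $\Conf_\ell(\C^\times)$, whose algebraic monodromy is trivial by \cite[Prop.~2.5]{FR} (as recorded in \Cref{rmk:toricmonodromy}); pulling back preserves the monodromy action on $H^*(F)$, and since the fiber is a wedge of circles, trivial action on $H^1(F)$ is exactly trivial action on $\pi_1(F)^{\mathrm{ab}}$. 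This step is genuinely needed and your proof is clearer on it. A minor omission: the paper's statement also asserts that each $\gp(j)$ is free abelian, which you do not address explicitly, but it follows immediately from the Falk--Randell splitting $\gp(j)\cong\bigoplus_i \mathfrak{F}_{|A_i|+1}(j)$ together with the fact that LCS quotients of free groups are free abelian.
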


\hyphenation{For-schungs-in-sti-tut}
\subsection{Acknowledgements} The bulk of the work reported in this paper was carried out during a Research-in-Pairs visit by the authors at  Mathematisches Forschungsinstitut Oberwolfach.
Both authors are grateful to MFO and their staff for their hospitality and support, and for the ideal working environment provided. 
The first author was supported by NSF DMS-2204299.
The second author was partly supported by the SNSF professorship grant PP00P2\_150552/1.

\setcounter{tocdepth}{1}
\tableofcontents

\section{Supersolvable locally geometric posets}

We recall basic ideas about posets and supersolvable geometric lattices.
We then introduce the definitions of M-ideal (\Cref{def:modideal}) and supersolvability (\Cref{def:ss}), extending the notions of modular elements in and supersolvability of geometric lattices to our setting of locally geometric posets.
We conclude this section with a motivating example: 
Dowling posets (\Cref{prop:dowling}).

\subsection{Generalities about posets} Let $\P$ be a partially ordered set (or ``poset'') with partial order relation $<$. For $x,y\in \P$ write $x\leq y$ when either $x<y$ or $x=y$, and $x \lessdot y$ when $x<y$ and $x\leq z<y$ implies $x=z$. Given any $x\in \P$ let $\P_{<x}:=\{y\in \P\st y<x\}$, partially ordered by the restriction of $<$. The posets $\P_{\leq x}$, $\P_{>x}$ and $\P_{\geq x}$ are defined analogously.  The \textit{interval} between two elements $x,y\in \P$ is the set $[x,y]:=\P_{\geq x}\cap \P_{\leq y}$. 

Let $\P, \Q$ be posets. A poset morphism $f:\P\to\Q$ is an order-preserving map (i.e., we require that $x\leq y$ implies $f(x)\leq f(y)$ for all $x,y\in \P$). We call $f$ a poset isomorphism if $f$ is bijective and its inverse is a poset morphism. An \textit{automorphism} of a poset $\P$ is any isomorphism $\P\to\P$.

A \textit{chain} in $\P$ is any $C\subseteq \P$ such that either $c_1\leq c_2$ or $c_1\geq c_2$ for all $c_1,c_2\in C$. The \textit{length} of a chain $C$ is $\vert C \vert -1$. The poset $\P$ is \textit{chain-finite} if all chains in $\P$ have finite length. An \textit{antichain} in $\P$ is any subset whose elements are pairwise incomparable. 

Call $\P$  \textit{bounded below} if it contains a unique minimum element, which we denote by $\zero$. In this case,  the \textrm{rank} $\rk(x)$ of an element $x\in \P$ is the maximum length of any chain in $\P_{\leq x}$. The set of \textit{atoms} of a bounded-below poset $\P$ is $$A(\P):=\{x\in \P\st \rk(x)=1\}.$$ If any two maximal chains in the same interval of $\P$ have equal length, $\P$ is said to be \textit{graded}. Equivalently, the assignment $x\mapsto \rk(x)$ defines a function $\rk:\P\mapsto \mathbb Z_{\geq 0}$ such that $\rk(\zero)=0$ and $\rk(y)=\rk(x)+1$ whenever $x\lessdot y$. If such a function exists, it is unique, and it is called the \textit{rank function} of $\P$.  If any two maximal elements in $\P$ have the same rank, then $\P$ is called \textit{pure}.

For any
two elements $x,y\in\P$, we define $x\vee y$ to be the \textit{set} of minimal
upper bounds and $x\wedge y$ to be the \text{set} of maximal lower bounds. That
is:
\[ x\vee y := \min\{z\in\P\st z\geq x \text{ and } z\geq y\},\]
%\quad\textrm{ and }\quad
 \[x\wedge y := \max\{z\in\P\st z\leq x \text{ and } z\leq y\}. \]
More generally, denote by $\bigwedge T$ and $\bigvee T$ the sets of minimal upper
bounds and maximal lower bounds of a set $T\subseteq\P$.

A {\em complement} of an element $x$ in a chain-finite poset $\P$ is any $z\in \P$ such that $x\vee z\subseteq \max \P$ and $x\wedge z\subseteq \min\P$. Given a subset $X\subseteq \P$ we say that $z\in\P$ is a complement to $X$ if $z$ is a complement of every $x\in X$. (Notice that this definition generalizes the usual one for lattices.)

\subsection{Locally geometric posets}
Recall that a \textit{lattice} is a poset $\L$ in which any pair of elements $x,y\in\L$ has a unique minimum upper bound ($\vert x\vee y \vert = 1$) and a unique maximum lower bound ($\vert x\wedge y \vert =1$). In this case we abuse notation and write, e.g., $a=x\vee y$ for $a\in x\vee y$. A meet-semilattice is a poset in which any pair of elements has a unique maximum lower bound. Note that any chain-finite meet-semilattice (hence also any chain-finite lattice) is bounded below.

\begin{definition} \label{def:geomlattice}
A chain-finite lattice $\L$ is called \defhno{geometric} if and only if, for all $x,y\in \L$:
\begin{center}
	$x\lessdot y$ if and only if there is an atom $a\in A(\L)$ with $a\not\leq x$, $y=x\vee a$.
\end{center}
\end{definition}

A geometric lattice $\L$ is necessarily ranked and furthermore
it is upper semimodular, meaning that for any $x,y\in\L$:
\begin{equation*}
\rk(x) + \rk(y) \geq \rk(x\vee y) + \rk(x\wedge y).
\end{equation*}

\begin{definition}
A graded, bounded below poset  $\P$ is \defh{locally geometric} if, for every $x\in\P$, the subposet 
$\P_{\leq x}$ is a geometric lattice. 
\end{definition}

\begin{remark}
We do not require $\P$ itself to even be a (semi)lattice. If $\P$ is a lattice, then it is locally geometric if and only if
it is geometric.
\end{remark}

\begin{example}
A classical example of a geometric lattice is a Boolean lattice $B_n$, the set of all subsets of $[n]=\{1,2,\dots,n\}$ ordered by inclusion. A simplicial poset, in which every closed interval is isomorphic to a Boolean lattice, is then a locally geometric poset. The Hasse diagram of one such example is depicted in \Cref{fig:locgeom}; observe that this is a locally geometric poset that is not a lattice nor a semilattice.
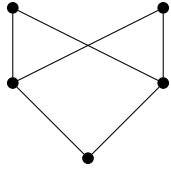
\begin{figure}[ht]
\begin{tikzpicture}
\solidnodes
\node (0) at (0,0) {};
\node (1) at (-1,1) {};
\node (2) at (1,1) {};
\node (a) at (-1,2) {};
\node (b) at (1,2) {};
\draw[-] (2)--(0)--(1)--(a)--(2)--(b)--(1);
\end{tikzpicture}
\caption{Hasse diagram of a locally geometric poset.}
\label{fig:locgeom}
\end{figure}
\end{example}

\begin{remark}\label{rem:subgl}
Let $\P$ be a locally geometric poset, and suppose that $x,y\in\P$ are such that
$x\vee y$ is nonempty. Then 
\begin{enumerate}
\item $x$ and $y$ have a (unique) greatest lower bound $x\wedge y$, and
\item for any $z\in x\vee y$, 
 $\rk(x)+\rk(y) \geq \rk(z)+\rk(x\wedge y)$.
\end{enumerate}
The reason is that both properties must hold in the
subposet $\P_{\leq z}$ whenever $z\in x\vee y$.
\end{remark}

\begin{remark}
If $\P$ is locally geometric, then so are $\P_{\leq x}$ and $\P_{\geq x}$ for
any $x\in\P$. 
\end{remark}

\subsection{Supersolvable geometric lattices}

There are several equivalent definitions for a modular element in a geometric lattice (see eg. \cite[Theorem 3.3]{brylawski}), and the following, due to Stanley \cite{stanley-modular}, is most useful for us. For this we need some more terminology. Let $\L$ be a chain-finite lattice. Then $\L$ has a unique minimal element $\zero$ as well as a unique maximal element, which we denote by $\one$. Let $x\in \L$. The \textit{complements} of $x$ in $\L$ are the elements $y\in\L$ such that $x\wedge y=\zero$ and $x\vee y=\one$.

\begin{definition}\label{def:modular}
An element $x$ in a geometric lattice $\L$ is \defh{modular} if 
the complements of $x$ form an antichain.
\end{definition}

\begin{remark}\label{rem:modular-equivalent} The following are equivalent for an element $x$ of a geometric lattice $\L$:
\begin{enumerate}
    \item \label{def:eqmod:0} $x$ is modular in $\L$;
   \item\label{def:eqmod:1}
   $\rk(x)+\rk(y)=\rk(x\vee y)+\rk(x\wedge y)$ for all $y\in\L$;
   \item\label{def:eqmod:2}
   $(u \vee y ) \wedge x = u\vee (y\wedge x)$ for all $u\leq x$ and all $y\in \L$.
\end{enumerate}   
All equivalences are well-known in the finite case. The proof of \eqref{def:eqmod:0}$\Leftrightarrow$\eqref{def:eqmod:1} given in \cite[Theorem 1]{stanley-modular} for the finite case carries over to the chain-finite setting using \cite[Theorem 2.29 and 6.4.(iii)]{aigner}. The equivalence \eqref{def:eqmod:0}$\Leftrightarrow$\eqref{def:eqmod:2} is proved in the chain-finite setting in \cite[Proposition 2.8]{crapo-rota}.
\end{remark}

\begin{example}
Let $\L=\Pi_4$ be the set of partitions of $\{1,2,3,4\}$ partially ordered by refinement, whose Hasse diagram is depicted in \Cref{fig:Pi4}.
The partition $123|4$ is modular because its set of complements $\{14|2|3, 24|1|3, 34|1|2\}$ is an antichain.

The partition $12|34$ is not modular because both $13|24$ and $13|2|4$ are complements of $12|34$ while $13|2|4<13|24$. One can alternatively see that $12|34$ is not modular from the inequality $\rk(12|34)+\rk(13|24) = 4 > 3 = \rk(1234) + \rk(1|2|3|4)$. 
\begin{figure}[ht]
%\scriptsize
{
\begin{tikzpicture}[scale=.7]
\node (0) at (0,0) {$1|2|3|4$};
\node (12) at (-5,2) {$12|3|4$};
\node (13) at (-3,2) {$13|2|4$};
\node (14) at (-1,2) {$14|2|3$};
\node (23) at (1,2) {$23|1|4$};
\node (24) at (3,2) {$24|1|3$};
\node (34) at (5,2) {$34|1|2$};
\node (123) at (-6,5) {$123|4$};
\node (124) at (-4,5) {$124|3$};
\node (a) at (-2,5) {$12|34$};
\node (c) at (0,5) {$14|23$};
\node (b) at (2,5) {$13|24$};
\node (134) at (4,5) {$134|2$};
\node (234) at (6,5) {$234|1$};
\node (1234) at (0,7) {$1234$};
\foreach \x in {12,13,14,23,24,34} {\draw[-] (\x) -- (0);};
\foreach \x in {123,124,134,234,a,b,c} {\draw[-] (\x) -- (1234);};
\foreach \x in {123,124,a} {\draw[-] (\x.south) -- (12.north);};
\foreach \x in {123,134,b} {\draw[-] (\x.south) -- (13.north);};
\foreach \x in {124,134,c} {\draw[-] (\x.south) -- (14.north);};
\foreach \x in {123,234,c} {\draw[-] (\x.south) -- (23.north);};
\foreach \x in {124,234,b} {\draw[-] (\x.south) -- (24.north);};
\foreach \x in {134,234,a} {\draw[-] (\x.south) -- (34.north);};
\end{tikzpicture}
}
\caption{The lattice $\Pi_4$ of partitions of $\{1,2,3,4\}$.}
\label{fig:Pi4}
\end{figure}
\end{example}

The following definition extends to the chain-finite case Stanley's criterion for when a finite geometric lattice is supersolvable \cite[Corollary 2.3]{stanley}. We will later further extend it to locally geometric posets.

\begin{definition}\label{def:sslattice}
A geometric lattice $\L$ is \defhno{supersolvable} if there is a chain $\zero=y_0<y_1<\cdots<y_n=\one$ where each $y_i$ is a modular element with $\rk(y_i)=i$.
\end{definition}

\begin{example}
In a Boolean lattice, the least upper bound of two subsets is their union while the greatest lower bound is their intersection.
It is easy to see that every element is a modular element, which implies that a Boolean lattice is supersolvable.
\end{example}

\begin{example}\label{ex:partitions}
The partition lattice $\Pi_n$ is 
the collection of set partitions of $[n]=\{1,2,\dots,n\}$ ordered by refinement. 
The modular elements of $\Pi_n$ correspond to partitions with at most one nonsingleton block, and one can build a chain of these elements to see that $\Pi_n$ is supersolvable.
\end{example}

\subsection{M-ideals}

Let $\P$ be a locally geometric poset. An \emph{order ideal} in $\P$ is a
downward-closed subset. An order ideal is \emph{pure} if all maximal elements
have the same rank. An order ideal $\Q $ is \emph{join-closed} if $T\subseteq \Q $
implies $\bigvee T\subseteq \Q $.

Here we introduce M-ideals to generalize the notion of modular elements beyond lattices. Our perspective is to rather generalize the order ideal generated by a modular element and how this ideal interacts with the entire poset. In the lattice case this ideal is principal and is therefore determined by its unique maximal element; in general this will not be the case.
The motivation for our definition is geometric (see \Cref{thm:fib}).

\begin{definition}\label{def:modideal}
An \defh{M-ideal} of a locally geometric poset $\P$ is a pure,
join-closed order ideal $\Q \subseteq\P$ such that:
\begin{deflist}
\item\label[definition]{def:mod:1}
if $y\in \Q $ and $a\in A(\P)$ such that $a\vee y=\emptyset$ then $a\in \Q $, and
\item\label[definition]{def:mod:2}
for every $x\in\max(\P)$, there is some $y\in\max(\Q )$ such that $y$ is a modular
element in the geometric lattice $\P_{\leq x}$.
\end{deflist}
\end{definition}

\begin{remark}\label{rem:uniquey}
Let $\Q\subseteq\P$ be an M-ideal, and let $x\in\max(\P)$. 
Since $\Q$ is join-closed, the $y\in\max(\Q)$ which is modular in $\P_{\leq x}$, guaranteed by \Cref{def:mod:2}, is necessarily unique.
\end{remark}

\begin{example}
In every locally geometric poset $\P$, both $\P$ and $\{\zero\}$ are M-ideals.
\end{example}

\begin{example}\label{ex:ss:mod}
Consider the poset $\P$ in \Cref{fig:ex:ss:mod}.
Both $\P_{\leq 1}$ and $\P_{\leq 3}$ are M-ideals in $\P$.
On the other hand, $\P_{\leq 2}$ is not an M-ideal, since $\max(\P_{\leq
2})=\{2\}$, $\max(\P)=\{a,b\}$, and $2\notin\P_{\leq b}$.
\begin{figure}[hb]
\begin{tikzpicture}
\node (T) at (0,0) {$\zero$};
\node (1) at (-1,1.5) {$1$};
\node (2) at (0,1.5) {$2$};
\node (3) at (1,1.5) {$3$};
\node (11) at (-1,3) {$a$};
\node (-11) at (1,3) {$b$};
\draw[-] (T)--(1);
\draw[-] (T)--(2);
\draw[-] (T)--(3);
\draw[-] (1)--(11);
\draw[-] (1)--(-11);
\draw[-] (2)--(11);
\draw[-] (3)--(11);
\draw[-] (3)--(-11);
\solidnodes
\end{tikzpicture}
\caption{In the poset $\P$ depicted here, the ideals $\P_{\leq 1}$ and $\P_{\leq 3}$ are M-ideals, while $\P_{\leq 2}$ is not (see \Cref{ex:ss:mod}).}
\label{fig:ex:ss:mod}
\end{figure}
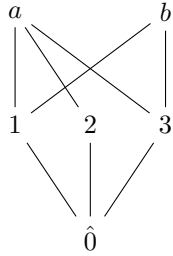
\end{example}

The following lemma shows that our definition of an M-ideal extends the definition of a modular element in a geometric lattice.
\begin{lemma}\label{lem:latticemodid}
An order ideal $\Q $ in a geometric lattice $\L$ is an M-ideal if and only if
$\Q =\L_{\leq y}$ for some modular element $y$.
\end{lemma}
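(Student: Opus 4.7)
The plan is to prove both directions of the biconditional, using crucially that $\L$ is a lattice (so in particular has a unique maximum $\one$ and joins are always singletons).

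For the forward direction, I would take $y \in \L$ modular and verify the four conditions of an M-ideal for $\Q := \L_{\leq y}$. Purity is immediate since $y$ is itself the unique maximum of $\Q$. Join-closedness is standard: the join of any family of elements $\leq y$ is again $\leq y$ in any lattice. For condition (1) of \Cref{def:modideal}, I would observe that because $\L$ is a lattice, $a \vee z$ is always a singleton, so the hypothesis $a \vee z = \emptyset$ never occurs and the condition is vacuous. Finally for condition (2), note that $\max(\L) = \{\one\}$ (since chain-finite lattices are bounded above), so the condition reduces to asking that some element of $\max(\Q)$ be modular in $\L_{\leq \one} = \L$; here $y$ itself works.

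For the reverse direction, the first thing to establish is that an M-ideal $\Q \subseteq \L$ has a unique maximum. If $y_1, y_2 \in \max(\Q)$, then by join-closedness $y_1 \vee y_2 \in \Q$, and since this join dominates both $y_i$ and each $y_i$ is maximal in $\Q$, we must have $y_1 = y_1 \vee y_2 = y_2$. Denote this unique maximum by $y$. Then condition (2), applied to the unique $x = \one \in \max(\L)$, forces $y$ to be modular in $\L_{\leq \one} = \L$. It remains to identify $\Q$ with $\L_{\leq y}$: the inclusion $\L_{\leq y} \subseteq \Q$ follows from $\Q$ being a downward-closed and containing $y$, while for the other inclusion, given any $z \in \Q$, join-closedness puts $y \vee z \in \Q$, and maximality of $y$ forces $y \vee z = y$, hence $z \leq y$.

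I do not anticipate a real obstacle here: the lemma is essentially a bookkeeping exercise checking that, when $\P$ happens to be a lattice, \Cref{def:modideal} collapses to the classical notion of a modular element. The only mildly subtle points are (i) noticing that the atom condition (1) is automatically vacuous in the lattice setting and (ii) extracting the uniqueness of the top of $\Q$ from join-closedness (invoking \Cref{rem:uniquey} if desired). Neither requires any of the stronger lattice properties (e.g.\ semimodularity) beyond the existence of binary joins.
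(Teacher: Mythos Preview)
Your proof is correct and follows essentially the same approach as the paper: identify join-closed order ideals in a lattice with principal ideals $\L_{\leq y}$, observe that condition~(1) is vacuous since joins are never empty, and reduce condition~(2) to modularity of $y$ in $\L_{\leq \one}=\L$. (Minor note: your ``forward'' and ``reverse'' labels are swapped relative to the statement's order, but the content is fine.)
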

\begin{proof}
Let $\L$ be a geometric lattice. Since joins are always nonempty in a lattice,
an order ideal $\Q $ is join-closed if and only if $\Q =\L_{\leq y}$ for some $y$.
Any such ideal is pure and satisfies condition \Cref{def:mod:1}. Since there is
a unique maximum in the lattice $\L$, condition \Cref{def:mod:2} is equivalent
to requiring that $y$ be modular in $\L$. 
\end{proof}

We conclude this subsection with several properties of M-ideals to be used later in this paper.
\begin{lemma}\label{lem:Qcomp}
Let $\Q $ be an M-ideal in a locally geometric poset $\P$ with
$\rk(\Q )=\rk(\P)-1$, and let $a\in\P$. Then $a\in A(\P)\setminus A(\Q )$ if and
only if $a\wedge y=\zero$ for all $y\in\max(\Q )$.
\end{lemma}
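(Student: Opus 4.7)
The forward direction needs only that $\Q$ is downward-closed. Assume $a\in A(\P)\setminus A(\Q)$ and fix $y\in\max(\Q)$. As $a$ is an atom, any element $z\leq a$ equals $\zero$ or $a$; a common lower bound equal to $a$ would imply $a\leq y\in\Q$, whence $a\in\Q$ (order ideal), contradicting $a\notin A(\Q)$. So $a\wedge y=\zero$.

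For the converse, assume $a\wedge y=\zero$ for every $y\in\max(\Q)$; we must show $a\in A(\P)$ and $a\notin\Q$. (The vacuous case $a=\zero$, in which the left side fails but the right side holds trivially, is implicitly excluded.) Non-membership is immediate: were $a\in\Q$, chain-finiteness would place $a$ below some $y\in\max(\Q)$, giving $a\leq a\wedge y=\zero$ and forcing $a=\zero$. For atomicity, assume toward contradiction that $\rk(a)\geq 2$. Pick $x\in\max(\P)$ above $a$ and apply the M-ideal axiom \Cref{def:mod:2} to obtain $y\in\max(\Q)$ that is modular in the geometric lattice $\P_{\leq x}$. Since $a,y\leq x$, the join, meet, and rank computed in $\P_{\leq x}$ agree with those in $\P$ by \Cref{rem:subgl}, and the modular rank equation \eqref{def:eqmod:1} of \Cref{rem:modular-equivalent} yields
\[\rk(a\vee y)=\rk(a)+\rk(y)-\rk(a\wedge y)\geq 2+(\rk(\P)-1)-0=\rk(\P)+1,\]
contradicting $a\vee y\leq x$ and $\rk(x)\leq\rk(\P)$.

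The main obstacle is the atomicity step: the rest of the argument uses only that $\Q$ is a pure downward-closed subset, whereas ruling out $\rk(a)\geq 2$ is precisely where the existence of a modular representative in each top-dimensional interval (axiom \Cref{def:mod:2}) is essential. A minor technicality to handle is that the modular rank identity must be applied inside $\P_{\leq x}$ rather than directly in $\P$; \Cref{rem:subgl} supplies the needed compatibility of joins, meets, and ranks once a common upper bound $x$ of $a$ and $y$ has been fixed.
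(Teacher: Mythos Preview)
Your proof is correct and follows essentially the same approach as the paper's: both directions use the order-ideal property for the easy implication, and for the converse both pick $x\in\max(\P)$ above $a$, invoke \Cref{def:mod:2} to obtain a modular $y\in\max(\Q)\cap\P_{\leq x}$, and apply the modular rank identity in $\P_{\leq x}$ together with $a\wedge y=\zero$ to force $\rk(a)=1$. The only cosmetic difference is that the paper computes $\rk(a)=1$ directly (using $a\vee y=x$ from $\rk(y)=\rk(x)-1$), whereas you phrase it as a contradiction from $\rk(a)\geq 2$; you also explicitly flag the degenerate case $a=\zero$, which the paper leaves implicit.
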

\begin{proof}
First, suppose that $a\in A(\P)\setminus A(\Q )$, and let $y\in\max(\Q )$. If
$a\wedge y\neq\zero$, then $a\wedge y = a$ since $a\in A(\P)$. However, this
implies $a\leq y$, and thus $a\in \Q $ because $\Q $ is an order ideal. This is a
contradiction; therefore $a\wedge y=\zero$.

Now suppose that $a\wedge y=\zero$ for all $y\in\max(\Q )$. This means, in
particular, that $a\not\leq y$ for any $y\in\max(\Q )$. Thus $a\notin \Q $, and we
need to show that $a\in A(\P)$. Now let $x\in\max(\P)$ be such that $a\leq x$.
By \Cref{def:mod:2}, there is some $y\in\max(\Q )$ such that $y\leq x$ and $y$ is
modular in $\P_{\leq x}$. Since $a\not\leq y$ and $\rk(y)=\rk(x)-1$, we must
have $a\wedge y=\zero$ and $a\vee y=x$ in $\P_{\leq x}$. By modularity of $y$ in
$\P_{\leq x}$, this implies $\rk(a)=\rk(x)+\rk(\zero)-\rk(y)=1$. Thus, $a\in
A(\P)$.
\end{proof}

\begin{proposition}\label{prop:localmod}
If $\Q $ is an M-ideal of a locally geometric poset $\P$ with $\rk(\Q )=\rk(\P)-1$, then for any
$x\in\P\setminus \Q $, there is some $y\in \Q $ such that $x$ covers $y$ and $y$ is
modular in $\P_{\leq x}$.
\end{proposition}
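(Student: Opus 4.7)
My plan is to lift $x$ to a maximal element of $\P$, where \Cref{def:mod:2} directly yields a modular element, and then descend by taking the meet with $x$. Concretely, since $\rk(\P)$ is finite (as $\rk(\Q) = \rk(\P)-1$), I can choose $x' \in \max(\P)$ with $x \leq x'$; since $\Q$ is an order ideal and $x \notin \Q$, we also have $x' \notin \Q$. The M-ideal condition then supplies some $y' \in \max(\Q)$ that is modular in the geometric lattice $\P_{\leq x'}$. A brief rank bookkeeping using the purity of $\Q$ shows that $\rk(y') = \rk(\P)-1$; if $\rk(x')$ were $\rk(\P)-1$, then from $y' \leq x'$ we would get $x' = y' \in \Q$, contradicting $x' \notin \Q$. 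Therefore $\rk(x') = \rk(\P)$ and $y' \lessdot x'$.

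I will then set $y := x \wedge y'$ (computed in $\P_{\leq x'}$) and verify the three required properties. Containment $y \in \Q$ is immediate since $y \leq y' \in \Q$. For the cover relation, $x \notin \Q$ forces $x \not\leq y'$, so $x \vee y' > y'$; combined with $y' \lessdot x'$ and $x \vee y' \leq x'$, this gives $x \vee y' = x'$. Then \eqref{def:eqmod:1} applied to $y'$ and $x$ yields $\rk(y) = \rk(x) + \rk(y') - \rk(x') = \rk(x) - 1$, hence $y \lessdot x$ in the graded lattice $\P_{\leq x}$.

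The substantive part -- and, I expect, the main obstacle -- is verifying that $y$ is modular in $\P_{\leq x}$. The underlying general fact is that the meet of a modular element with any element of a geometric lattice is modular in the resulting lower interval; I plan to prove it directly using \Cref{rem:modular-equivalent}. Given $z \leq x$, note that meets and joins computed in $\P_{\leq x}$ agree with those in $\P_{\leq x'}$, and that $y \wedge z = y' \wedge z$ since $z \leq x$. Applying \eqref{def:eqmod:2} to the modular element $y'$ with $u = y \leq y'$ and $v = z$ gives $(y \vee z) \wedge y' = y \vee (z \wedge y') = y$, using $z \wedge y' \leq x \wedge y' = y$. The rank identity \eqref{def:eqmod:1} for $y'$, applied to $y \vee z$, then yields
\[\rk(y') + \rk(y \vee z) = \rk(y' \vee z) + \rk(y).\]
Combining this with $\rk(y')+\rk(z) = \rk(y'\vee z)+\rk(y'\wedge z)$ (again \eqref{def:eqmod:1} for $y'$) and the identity $\rk(y\wedge z) = \rk(y'\wedge z)$ produces $\rk(y)+\rk(z) = \rk(y\vee z)+\rk(y\wedge z)$, establishing modularity of $y$ in $\P_{\leq x}$.
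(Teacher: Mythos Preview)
Your proof is correct and follows essentially the same route as the paper's: lift $x$ to a maximal $x'$, take the modular $y'\in\max(\Q)$ below $x'$ supplied by \Cref{def:mod:2}, set $y=x\wedge y'$, and use modularity of $y'$ to deduce both $y\lessdot x$ and modularity of $y$ in $\P_{\leq x}$. The only cosmetic difference is in the last step: the paper verifies the modular law \Cref{rem:modular-equivalent}.\eqref{def:eqmod:2} for $y$ directly via a chain of equalities, whereas you instead derive the rank identity \eqref{def:eqmod:1} by combining one instance of \eqref{def:eqmod:2} (with $u=y$) with two applications of the rank equation for $y'$---both arguments are short and rest on the same underlying fact that $y'\wedge z = y\wedge z$ for $z\leq x$.
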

\begin{proof}
Let $\hat{x}\in\max(\P)$ be such that $\hat{x}\geq x$. Let
$\hat{y}\in\max(\Q )$ be such that $\hat{x}$ covers $\hat{y}$, guaranteed by \Cref{def:mod:2}. 
We have $\hat{x}\in\hat{y}\vee x$, because $x\notin Q$ implies $x\not\leq \hat{y}$. Let $y:=\hat{y}\wedge x$.
Via \Cref{rem:modular-equivalent}.\eqref{def:eqmod:1}, 
modularity of $\hat{y}$ in $\P_{\leq \hat{x}}$ implies $\rk(\hat{y})+\rk(x)=\rk(y)+\rk(\hat{x})$ and since $\rk(\hat{x})-\rk(\hat{y})=1$, this shows that $x\gtrdot y$.
In order to show that $y$ is modular in $\P_{\leq x}$, we check 
the condition in \Cref{rem:modular-equivalent}.\eqref{def:eqmod:2}. Let $u\leq y$ and let $z\in\P_{\leq x}$. Then in the lattice $\P_{\leq x}$
\begin{align*}
u\vee(z\wedge y)
=u\vee(z\wedge (x\wedge \hat{y}))
&=u\vee(z\wedge \hat{y})\\
&=(u\vee z)\wedge \hat{y}\\
&=(u\vee z) \wedge (x\wedge \hat{y})=
(u\vee z) \wedge y,
\end{align*}
where the middle equality holds by modularity of $\hat{y}$ in $\P_{\leq \hat{x}}$ (note that $u\leq \hat{y}$), the second and fourth equalities hold because $z\leq x$ and $u\vee z\leq x$, while  the first and last equalities are by definition of $y$.
\end{proof}

\begin{corollary}\label{cor:niceM}
Let $\Q $ be an M-ideal of a locally geometric poset $\P$ with $\rk(\Q )=\rk(\P)-1$. Then
\begin{itemize}
    \item[\mylabel{dagger}{$(\dagger)$}] for any two distinct
$a_1,a_2 \in A(\P)\setminus A(\Q)$ and every $x\in a_1\vee a_2$ there is $a_3\in A(\Q)$ with $x > a_3$.
\end{itemize}
\end{corollary}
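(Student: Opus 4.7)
The plan is to localize to the geometric lattice $\P_{\leq x}$ and apply \Cref{prop:localmod}. First I would observe that $x\notin\Q$: indeed $a_1\leq x$ but $a_1\notin \Q$, and $\Q$ is an order ideal, so $x\in\Q$ would force $a_1\in A(\Q)$. By \Cref{prop:localmod}, there is then some $y\in\Q$ with $x\gtrdot y$ such that $y$ is modular in $\P_{\leq x}$; the natural candidate for $a_3$ will be this $y$ itself, so it is enough to argue that $y$ is in fact an atom of $\P$.

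To see this, I would pin down the rank of $x$. The hypothesis $x\in a_1\vee a_2$ says that $x$ is a minimal upper bound of $a_1,a_2$ in $\P$; since $\P_{\leq x}$ is a lattice, this forces $x=a_1\vee a_2$ when the join is computed inside $\P_{\leq x}$. Upper semimodularity in the geometric lattice $\P_{\leq x}$ then gives $\rk(x)\leq\rk(a_1)+\rk(a_2)=2$, while $a_1\neq a_2$ yields $\rk(x)\geq 2$. Hence $\rk(x)=2$, and the $y$ produced above has $\rk(y)=1$, so $y\in A(\P)$. Combined with $y\in \Q$ and the fact that $\Q$ is an order ideal, this gives $y\in A(\Q)$, while $y<x$ holds by construction. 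Setting $a_3:=y$ finishes the argument.

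I do not expect a genuine obstacle here; the only piece of bookkeeping to keep in mind is that the rank of $y$ in $\P$ agrees with its rank in $\P_{\leq x}$, which is immediate since both are defined as the length of a longest chain in $\P_{\leq y}$. Note also that the argument in fact shows the stronger statement that the $y$ furnished by \Cref{prop:localmod} is already an atom of $\Q$ below $x$, which explains why the conclusion \ref{dagger} is the right compatibility condition to record in view of the later applications.
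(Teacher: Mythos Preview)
Your argument is correct and follows essentially the same route as the paper's proof: both show $x\notin\Q$, invoke \Cref{prop:localmod} to obtain $y\lessdot x$ with $y\in\Q$, establish $\rk(x)=2$ via upper semimodularity (the paper cites \Cref{rem:subgl}(2) for this), and conclude that $a_3:=y$ is an atom of $\Q$. The only differences are cosmetic: you spell out the semimodularity inequality explicitly rather than citing the remark, and you note that $y$ is modular in $\P_{\leq x}$, which is true but not needed here.
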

\begin{proof} Note that $x\in \P\setminus \Q$ (otherwise the fact that $\Q$ is downward-closed would imply $a_1,a_2\in \Q$) and $\rk(x)=2$ by \Cref{rem:subgl}.(2). An application of \Cref{prop:localmod} to $x$ gives some $y\lessdot x$, $y\in \Q$. Since $\rk(x)=2$ and $\rk$ is a rank function, $\rk(y)=1$ and so $a_3:=y$ satisfies the claim.
\end{proof}

\begin{corollary}\label{cor:niceGL}
Let $\L$ be a geometric lattice, let $y\in \L$ with $y\neq\one$ and let $\Q:=\L_{\leq y}$.
Then $y$ is a modular element of $\L$ of rank $\rk(y)=\rk(\L)-1$ if and only if \ref{dagger} above holds for $\Q$.
\end{corollary}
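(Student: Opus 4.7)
The plan is to prove the two implications separately by direct reduction to previously assembled material. For the forward direction, if $y$ is modular with $\rk(y)=\rk(\L)-1$, I would use \Cref{lem:latticemodid} to identify the principal order ideal $\Q=\L_{\leq y}$ as an M-ideal of $\L$ of rank $\rk(\L)-1$, and then apply \Cref{cor:niceM} with $\P=\L$ to conclude \ref{dagger}.

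For the converse, I would assume \ref{dagger} for $\Q=\L_{\leq y}$ with $y\neq\one$ and split the argument into (a) showing that $y$ is a coatom, and (b) showing that the complements of $y$ form an antichain. By \Cref{def:modular}, (b) is exactly modularity, so together with (a) this gives the claim.

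For (a), I would fix an atom $a\not\leq y$, which exists because $y\neq\one$ and $\L$ is generated by its atoms, and aim to show $y\vee a=\one$; since \Cref{def:geomlattice} guarantees $y\lessdot y\vee a$, this forces $\rk(y)=\rk(\L)-1$. Suppose, for contradiction, that some atom $a'\not\leq y\vee a$ exists. Then $a'\neq a$ and both lie in $A(\L)\setminus A(\Q)$, so \ref{dagger} supplies an atom $a_3\in A(\Q)$ with $a_3\leq a\vee a'$. The rank-2 geometric lattice $[\zero,a\vee a']$ has the property that any two of its distinct atoms join to the top, so $a\vee a_3=a\vee a'$; hence $a'\leq a\vee a_3\leq y\vee a$, contradicting the choice of $a'$.

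For (b), I would take complements $z_1\leq z_2$ of $y$ and prove that $z_1=z_2$. Because $y\neq\one$, $\zero$ is not a complement of $y$, so each $z_i$ contains some atom; and the condition $z_i\wedge y=\zero$ forces every atom of $z_i$ to lie outside $A(\Q)$. If $z_1<z_2$, the geometric lattice $[\zero,z_2]$ admits an atom $a_2\leq z_2$ with $a_2\not\leq z_1$; pairing it with any atom $a_1\leq z_1$ gives distinct atoms in $A(\L)\setminus A(\Q)$. Then \ref{dagger} produces some $a_3\in A(\Q)$ with $a_3\leq a_1\vee a_2\leq z_2$, forcing $a_3\leq z_2\wedge y=\zero$, which contradicts $a_3$ being an atom. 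The whole argument is a direct unwinding of the definitions, and I do not foresee any substantial obstacle; the main point to keep straight is the bookkeeping of which atoms lie in $A(\Q)$ versus $A(\L)\setminus A(\Q)$ at each step.
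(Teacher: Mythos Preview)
Your proof is correct. The forward direction is identical to the paper's. For the converse, the paper takes a more unified route: rather than separately proving (a) that $y$ is a coatom and (b) that complements form an antichain, it shows in one stroke that every complement of $y$ has rank $1$. The argument is essentially your (b): given a complement $z$ with $\rk(z)>1$, pick $z'\leq z$ of rank $2$; every atom below $z'$ lies in $A(\L)\setminus A(\Q)$ (else $z\wedge y>\zero$), which contradicts \ref{dagger}. Once all complements are atoms, modularity is immediate, and the rank condition follows because any rank-$1$ complement $z$ satisfies $y\lessdot y\vee z=\one$. So your step (a) is correct but redundant: the paper extracts the coatom conclusion directly from the rank-$1$-complements fact, saving a separate argument.
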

\begin{proof}
By \Cref{lem:latticemodid}, $y$ being modular of rank $\rk(\L)-1$ is equivalent to $\Q$ being an M-ideal of rank $\rk(\L)-1$ and then \Cref{cor:niceM} implies that \ref{dagger} holds. On the other hand, assume \ref{dagger}.
We prove that the complements of $y$ in $\L$ all have rank one, implying both that $y$ is modular and that $\rk(y)=\rk(\L)-1$.
Let $z$ be a complement to $y$ in $\L$. Then $z>\zero$. 
If $\rk(z)>1$, there is some $z'\leq z$ with $\rk(z')=2$, and every atom below $z'$ is in $\L\setminus\Q$ (otherwise $z\wedge y>\zero$),
contradicting \ref{dagger}. Thus $\rk(z)=1$ as was to be shown. 
\end{proof}
\subsection{Supersolvability}
We are now prepared to present our definition of a supersolvable locally geometric poset, which extends the definition of a supersolvable geometric lattice (cf.\ \Cref{def:sslattice}).
\begin{definition}\label{def:ss}
A locally geometric poset $\P$ is \defh{supersolvable} if there is a chain
\[
\zero = \Q_0 \subset \Q_1 \subset \cdots \subset \Q_n = \P\]
 where each $\Q_i$ is an M-ideal of $\Q_{i+1}$ with $\rk(\Q_i)=i$.
\end{definition}

\begin{example}\label{ex:ss:ss}
Recall the poset $\P$ from \Cref{ex:ss:mod} (see also \Cref{fig:ex:ss:mod}).
It is supersolvable via the chain $\zero\subset\P_{\leq 1}\subset\P$.
\end{example}

\begin{proposition}\label{prop:ss2ways}
If $\L$ is a geometric lattice, then $\L$ satisfies \Cref{def:ss} if and only if it satisfies
\Cref{def:sslattice}.
\end{proposition}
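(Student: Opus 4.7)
The bridge between the two definitions is \Cref{lem:latticemodid}: in a geometric lattice $\L$, the M-ideals are exactly the principal order ideals $\L_{\leq y}$ generated by modular elements $y$. Thus a chain of M-ideals $\Q_0 \subset \cdots \subset \Q_n = \L$ with $\rk(\Q_i)=i$ corresponds, via $\Q_i = \L_{\leq y_i}$, to a chain $y_0 < \cdots < y_n = \one$ with $\rk(y_i)=i$. The task is to verify that the two modularity conditions along these chains match up.

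For the implication $\Leftarrow$, I would start with a modular chain $y_0 < \cdots < y_n = \one$ per \Cref{def:sslattice} and set $\Q_i := \L_{\leq y_i}$. The order-ideal, purity, and join-closedness conditions are immediate, and condition~\Cref{def:mod:1} is vacuous since $\Q_{i+1}$ is itself a lattice (so $a \vee y$ is never empty). Condition~\Cref{def:mod:2} reduces to showing that $y_i$ is modular in the geometric lattice $\L_{\leq y_{i+1}}$. This ``restriction'' of modularity follows from the rank characterization in \Cref{rem:modular-equivalent}.\eqref{def:eqmod:1}, because meets and joins computed in $\L_{\leq y_{i+1}}$ coincide with those computed in $\L$ for elements below $y_{i+1}$.

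For the implication $\Rightarrow$, I would apply \Cref{lem:latticemodid} iteratively from the top of the chain: $\Q_{n-1}$ is an M-ideal of the geometric lattice $\L$, yielding $y_{n-1}$ modular in $\L$; then $\Q_{n-2}$ is an M-ideal of the geometric lattice $\L_{\leq y_{n-1}}$, yielding $y_{n-2}$ modular in $\L_{\leq y_{n-1}}$; and so on. The main obstacle, and the essential step, is to upgrade ``modular in $\L_{\leq y_{i+1}}$'' to ``modular in $\L$'' for each $y_i$. I would proceed by downward induction on $i$, the base case $i=n-1$ being already established. For the inductive step, assume $y_{i+1}$ is modular in $\L$ and $y_i$ is modular in $\L_{\leq y_{i+1}}$, and verify the rank identity $\rk(y_i) + \rk(z) = \rk(y_i \vee z) + \rk(y_i \wedge z)$ for arbitrary $z \in \L$. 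The key auxiliary equation $(y_i \vee z) \wedge y_{i+1} = y_i \vee (z \wedge y_{i+1})$ follows from \Cref{rem:modular-equivalent}.\eqref{def:eqmod:2} applied to $y_{i+1}$ with $u = y_i \leq y_{i+1}$; combining this with the rank formula for $y_{i+1}$ in $\L$ and for $y_i$ in $\L_{\leq y_{i+1}}$ (noting $y_i \wedge z = y_i \wedge (z \wedge y_{i+1})$) then yields the desired identity after a short calculation, completing the induction.
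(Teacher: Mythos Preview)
Your proof is correct and follows the same route as the paper, using \Cref{lem:latticemodid} to translate between chains of M-ideals and chains of modular elements. The paper's argument is terser and leaves implicit the transfer-of-modularity step (from modular in $\L_{\leq y_{i+1}}$ to modular in $\L$) that you carefully justify via downward induction and \Cref{rem:modular-equivalent}.
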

\begin{proof}
Via \Cref{lem:latticemodid}, a geometric lattice $\L$ satisfies \Cref{def:ss} if and only if there is a chain
\[ \zero \subset \Q_{\leq y_1} \subset \cdots \subset \Q_{\leq y_n} = \L \]
with each $y_i$ a modular element of rank $i$. In particular, this is equivalent
to the existence of a maximal chain of modular elements $\zero<y_1<\cdots<y_n=\one$ 
as required by \Cref{def:sslattice}. 
\end{proof}

\begin{remark}
If a locally geometric poset is supersolvable, then every closed interval $\P_{\leq x}$ is a supersolvable
geometric lattice. However, this ``local'' supersolvability is not enough for $\P$
itself to be supersolvable, as demonstrated in the following example.
\end{remark}

\begin{example}\label{ex:locss}
Consider the poset $\P$ whose Hasse diagram is depicted in \Cref{fig:ex:locss},
which is the Boolean algebra on three generators with the maximum element
removed.
Every closed interval in $\P$ is supersolvable (since every Boolean lattice is), however it is not itself supersolvable.
Indeed, the only proper order ideals which are pure and join-closed are
principal, that is, $\P_{\leq x}$ for some rank-one element $x$. However, such
an order ideal cannot satisfy \Cref{def:mod:2} since no single element is
covered by all maximal elements.

This particular poset describes the intersection data of an affine hyperplane arrangement, explicitly the de-cone of an arrangement $\A$ of four generic hyperplanes in $\C^3$. The arrangement $\A$ is not supersolvable either; this is not a coincidence and will be made explicit in Theorem \ref{thm:sscone}.

\begin{figure}[ht]
\begin{tikzpicture}[scale=1.2]
\node (0) at (0,0) {$\zero$};
\node (1) at (-1.5,1.5) {$\{1\}$};
\node (2) at (0,1.5) {$\{2\}$};
\node (3) at (1.5,1.5) {$\{3\}$};
\node (a) at (-1.5,3) {$\{1,2\}$};
\node (b) at (0,3) {$\{1,3\}$};
\node (d) at (1.5,3) {$\{2,3\}$};
\foreach \x in {1,2,3} \draw[-] (0) -- (\x);
\foreach \x in {a,b} \draw[-] (1.north) -- (\x.south);
\foreach \x in {a,d} \draw[-] (2.north) -- (\x.south); 
\foreach \x in {b,d} \draw[-] (3.north) -- (\x.south); 
\end{tikzpicture}
\caption{
This locally geometric poset is ``locally'' supersolvable but not 
supersolvable (see \Cref{ex:locss}).}
\label{fig:ex:locss}
\end{figure}
\end{example}

\subsection{Dowling posets}

Dowling posets \cite{BG} form a class of locally geometric posets which motivates
the general definition of supersolvability.
These generalize partition lattices and Dowling lattices, which are
known to be supersolvable geometric lattices \cite{stanley,dowling}.
To define these posets, let us fix a positive integer $n$, a finite group $G$,
and a finite $G$-set $S$. Denote $[n]=\{1,2,\dots,n\}$. 

Given a subset $B\subseteq[n]$, a $G$-coloring is a function $b:B\to G$. Define
an equivalence relation on $G$-colorings of $B$ where $(b:B\to G)\sim(b':B\to
G)$ whenever $b'=bg$ for some $g\in G$. 
Note that if $B=\{k\}$ is a singleton then all $G$-colorings are equivalent.
A partial $G$-partition of $[n]$ is a
collection $\beta=\{(B_1,\overline{b_1}),\dots,(B_\ell,\overline{b_\ell})\}$
where $\{B_1,\dots,B_\ell\}$ is a partition of some subset $T\subseteq [n]$ and
each $\overline{b_i}$ is a chosen equivalence class of $G$-colorings on $B_i$.
Given such a partial $G$-partition, denote $Z_\beta=[n]\setminus\cup_i B_i$.

Let $D_n(G,S)$ be the set of pairs $(\beta,z)$ where $\beta$ is a partial
$G$-partition of $[n]$ and $z:Z_\beta\to S$.
This set is partially ordered via the covering relations:
\begin{itemize}
\item $(\beta\cup\{(A,\overline{a}),(B,\overline{b})\},z) \prec
(\beta\cup\{(A\cup B,\overline{a\cup bg})\},z)$ whenever $g\in G$, and
\item $(\beta\cup\{(B,\overline{b})\},z)\prec(\beta,z')$ whenever $z':B\cup
Z_\beta\to S$ satisfies $z'|_{Z_\beta}=z$ and $z'|_B=f\circ b$ for some
$G$-equivariant function $f:G\to S$.
\end{itemize}
As shown in \cite[Theorem A]{BG}, this is a locally geometric poset whose
maximal intervals are products of partition and Dowling lattices.

\begin{proposition}\label{prop:dowling}
For any positive integer $n$, finite group $G$, and finite $G$-set $S$, the
Dowling poset $D_n(G,S)$ is supersolvable.
\end{proposition}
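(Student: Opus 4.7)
The plan is to exhibit an explicit chain of M-ideals witnessing supersolvability. Recall that $D_n(G,S)$ is graded with rank function $\rk(\beta,z)=n-|\beta|$: the minimum $\zero$ has $\beta$ equal to the partition of $[n]$ into singletons (each carrying its unique class of colorings) and $z$ the empty function, while each covering either merges two blocks or projects one block into $S$, in either case decreasing $|\beta|$ by one. For $0\le i\le n$ set
\[
\Q_i := \{(\beta,z)\in D_n(G,S)\st \{j\}\text{ is a singleton block of }\beta\text{ for all } j\in\{i+1,\dots,n\}\}.
\]
Then $\Q_0=\{\zero\}$, $\Q_n=D_n(G,S)$, and informally $\Q_i$ consists of those elements whose partition, coloring and projection activity is confined to $\{1,\dots,i\}\subseteq[n]$.

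First I would collect the easy verifications. The subposet $\Q_i$ is downward closed because each covering relation only touches the block(s) it explicitly acts on, so the singleton status of $\{j\}$ for $j>i$ is preserved by going down; and $\Q_i$ is pure of rank $i$, its maximal elements having the form $(\{(\{i+1\},*),\dots,(\{n\},*)\},z_*)$ for an arbitrary $z_*\colon\{1,\dots,i\}\to S$. (When $S=\emptyset$ the Dowling poset reduces to the Dowling lattice $Q_n(G)$, whose supersolvability is classical.) For join-closedness in $\Q_{i+1}$, a minimal upper bound of elements that each leave $\{i+1,\dots,n\}$ untouched cannot itself act on those indices, since any cover involving them could be stripped off the top to produce a smaller upper bound. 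Condition \Cref{def:mod:1} is verified by contraposition: if an atom $a$ of $\Q_{i+1}$ lies outside $\Q_i$, then $a$ either merges $\{i+1\}$ with some $\{k\}$ ($k\le i$) or projects $\{i+1\}$ to some $s\in S$; because any $y\in\Q_i$ leaves $\{i+1\}$ as an untouched singleton, one can explicitly combine the local action of $a$ on $\{i+1\}$ with the activity of $y$ on $\{1,\dots,i\}$ to build a common upper bound, contradicting $a\vee y=\emptyset$.

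The principal obstacle is condition \Cref{def:mod:2}. Given a maximal $x\in\Q_{i+1}$, write $x=(\{(\{i+2\},*),\dots,(\{n\},*)\},z_x)$ with $z_x\colon\{1,\dots,i+1\}\to S$, and set
\[
y := (\{(\{i+1\},*),(\{i+2\},*),\dots,(\{n\},*)\},z_x|_{\{1,\dots,i\}}).
\]
Then $y\in\max(\Q_i)$, $y\lessdot x$ in $\Q_{i+1}$, and $(\Q_{i+1})_{\le x}=(D_n(G,S))_{\le x}$ because no element below $x$ can break the singleton structure on $\{i+2,\dots,n\}$. To show that $y$ is modular in this geometric lattice, I would invoke the product description of maximal intervals in $D_n(G,S)$ from \cite[Theorem A]{BG}: the $G$-orbits of $z_x$ induce a partition of $\{1,\dots,i+1\}$, and $(D_n(G,S))_{\le x}$ factors accordingly as a product of partition lattices (one for each free orbit) and Dowling lattices (one for each remaining orbit). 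Under this identification $y$ corresponds to $\hat 1$ in every factor that does not meet $i+1$, and to the coatom ``isolate $i+1$'' in the factor that does. The latter is a modular coatom of either a partition lattice (\Cref{ex:partitions}) or a Dowling lattice \cite{dowling}, and since modular elements of a product of geometric lattices are exactly the coordinate-wise modular ones, $y$ is modular in the whole interval.

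The main technical difficulty is thus unpacking the product decomposition of $(D_n(G,S))_{\le x}$ supplied by \cite{BG} and tracking the image of $y$ inside it; once that identification is explicit, the verification of modularity of $y$ reduces to a standard statement about the modular coatom that isolates a single index in a partition or Dowling lattice.
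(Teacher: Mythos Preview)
Your proposal is correct and follows essentially the same route as the paper. The paper argues by induction on $n$, taking $\Q:=\im(\iota)$ where $\iota(\beta,z)=(\beta\cup\{n\},z)$, which is exactly your $\Q_{n-1}$; your explicit chain $\Q_0\subset\cdots\subset\Q_n$ is just this induction unrolled (each $\Q_i\cong D_i(G,S)$). Both arguments verify \Cref{def:mod:2} by invoking the product decomposition of maximal intervals from \cite[Theorem A]{BG} and the modularity of the ``isolate one index'' coatom in a partition or Dowling lattice, and both handle the $S=\emptyset$ case separately via the classical supersolvability of Dowling lattices.
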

\begin{proof}
We proceed by induction on $n$. The case $n=1$ is immediate, so let $n>1$.
There is an injective map of posets $D_{n-1}(G,S)\to D_n(G,S)$ defined by
$\iota(\beta,z) = (\beta\cup\{n\},z)$.
The image $\Q :=\im(\iota)$ is a pure, join-closed order ideal of $D_n(G,S)$,
isomorphic to $D_{n-1}(G,S)$, and so by induction it suffices to prove that $\Q $
satisfies the conditions of \Cref{def:mod:1,def:mod:2}.

Using the description of atoms in $D_n(G,S)$ from \cite[Lemma 2.5.1]{BG},
we see that the atoms which are not in $\Q$ are of the following form:
\begin{itemize}
\item a $G$-partition of $[n]$ whose only nonzero singleton block is $\{i,n\}$,
for some $i<n$, or
\item the partition of $[n-1]$ into singletons along with any function
$z:\{n\}\to S$.
\end{itemize}
In each case, it is straightforward to check that such an atom has a unique
minimal upper bound with an element of the form $(\beta\cup\{n\},z)$.

Finally, to see \Cref{def:mod:2}, we consider two cases: either $S$ is empty or
nonempty. If $S$ is nonempty, then the maximal elements of $D_n(G,S)$ are
of the form $(\emptyset,z)$, and the interval $D_n(G,S)_{\leq(\emptyset,z)}$ is
isomorphic to the product of Dowling lattices $D_{z^{-1}(Gs)}(G_s)$ where $Gs\in
S/G$. In particular, the element $(\{n\},z|_{Z-\{n\}})$ is a modular element of
this interval and also a maximal element of $\Q $. 
If $S$ is empty, then every maximal interval is isomorphic to a partition
lattice $\Pi_n$, of which a corank-one element with singleton $\{n\}$ is
modular.
\end{proof}

These Dowling posets were defined to describe the intersection data of a collection of submanifolds whose complement is an orbit configuration space -- an equivariant analogue of configuration spaces first studied by Xicot\'encatl \cite{X}. 
The supersolvability of these posets, and Xicot\'encatl's study of fiber bundles on orbit configurations, 
hints at a much larger phenomenon that we study in the next section.

\section{Topological fibrations of arrangements}\label{sec:fib}

In this section, we establish a topological interpretation of an M-ideal, generalizing Terao's Fibration Theorem \cite{teraofibthm} on hyperplane arrangements. First, we fix notation and terminology to be used throughout.

\subsection{Arrangements}\label{sec:def:alga}
Fix a finite-dimensional connected
abelian Lie group $\G$, so that $\G\cong(S^1)^d\times\R^v$ for some nonnegative
integers $d,v$. Also fix a finite-rank free abelian group $\Gamma$ and
$T=\Hom(\Gamma,\G)$. Note that the group operation on $\G$ induces a group operation on $T$, which we denote by
$+$. For sets $U,V\subseteq T$, let 
$U+V := \{t +s\st t\in U,s\in V\}\subseteq T$.
If $U=\{t\}$, we abbreviate $t+V=U+V$, which is a translation of $V$ by $t$.

\begin{definition}
\label{def:alga}
An \defh{abelian Lie group arrangement}, or an \emph{abelian arrangement} for short, is a collection $\{H_\alpha\st
\alpha\in\A\}$ for some finite set $\A\subseteq\Gamma$, where
\[H_\alpha := \{t\in T\st t(\alpha)=0\}.\]
The complement of $\A$ is denoted by
\[ M(\A) := 
T\setminus \bigcup_{\alpha\in\A} H_\alpha.\]

\textit{Linear, toric,} and \textit{elliptic} arrangements are abelian Lie group arrangements with $\G = \C$, $\mathbb C^\times$ or a complex elliptic curve, respectively (here $\G\cong(S^1)^d\times\R^v$ for $(d,v)=(0,2)$,
$(1,1)$, and $(2,0)$).

We will often refer to an arrangement $\{H_\alpha\st \alpha\in\A\}$ simply by $\A$ when there is no confusion.

\end{definition}

\begin{definition}\label{def:layers}
A \defh{layer} of an arrangement $\A$ is a nonempty connected component of an intersection $\bigcap_{\alpha\in S}H_\alpha$ where $S\subseteq\A$.
The set $\P(\A)$ of layers, partially ordered by reverse inclusion, is called the \defh{poset of layers}.
\end{definition}

\begin{example}[Graphic arrangements and configuration spaces]\label{ex:graphic}
Every finite simple graph determines an abelian Lie group arrangement in the following way.
Let $G$ be a finite simple graph with vertex set $[n]=\{1,2,\dots,n\}$ and edge set $E$. Let $\Gamma$ be a free abelian group with basis  $\beta_1,\dots,\beta_n$. Given two elements $1\leq i<j\leq n$, define $\alpha_{i,j}=\beta_i-\beta_j \in\Gamma$ and abbreviate $H_{i,j}:=H_{\alpha_{i,j}}\subseteq\Hom(\Gamma,\G)$. 
Now let $\A_G := \{H_{i,j}\st \{i,j\}\in E\}$, the arrangement associated to the graph $G$.
In the case that $G=K_n$ is the complete graph on $n$ vertices, the complement $M(\A_{K_n})$ is the configuration space of $n$-tuples of distinct points in $\G$, denoted by $\Conf_n(\G)$. For an arbitrary simple graph, the complement is sometimes called a \textit{partial configuration space}, allowing points to collide.

The poset of layers of $\A_{K_n}$ is isomorphic to the partition lattice $\Pi_n$, which we saw in \Cref{ex:partitions} was supersolvable.
More generally, the poset of layers for a graphic arrangement $\A_G$ is supersolvable if and only if the graph $G$ is chordal \cite{stanley}.
\end{example}

By convention, $T$ is the unique minimum element of $\P(\A)$, thought of as the
empty intersection. 
The poset of layers for an abelian arrangement can be realized as the quotient of a geometric semilattice by a translative group action (see \cite[Lemma 9.8]{dD} and \Cref{def:translative} below) and hence $\P(\A)$ is a locally geometric poset. 
The atoms of $\P(\A)$ are precisely the connected components
of the $H_\alpha$, where $\alpha\in\A$.
Note that if $K$ is the connected component of an intersection
$X=\cap_{\alpha\in S}H_\alpha$ passing through the identity of $T$, then every
connected component of $X$ is of the form $t+K$ for some $t\in T$.

\begin{d-a}
We call an arrangement \defh{essential} if $\A$ generates a full subgroup of $\Gamma$. The arrangement is \defh{irredundant} if, for distinct $\alpha$ and $\beta$, $H_\alpha$ and $H_\beta$ do not share a connected component. All arrangements that we will consider will be essential and irredundant.
\end{d-a}

\begin{example}\label{ex:ss}
Let $\Gamma=\mathbb Z^2$ and $\A=\{\alpha_1=(1,0),\alpha_2=(0,1),\alpha_3=(1,2)\}$. Let $\G=S^1\times \mathbb R^v$ and
consider an arrangement in $T$. If $v=0$ or $v=1$, we may identify
$T$ with $(S^1)^2$ or $(\C^\times)^2$, respectively.
\Cref{fig:ex:ss} depicts the arrangement in $(S^1)^2$ and the Hasse diagram for
its poset of layers in either case.
\begin{figure}[ht]
\begin{tikzpicture}[scale=2]
\draw[thick,-] (0,0)--(1,0)--(1,1)--(0,1)--(0,0);
\draw[thick,red,-] (0,0)--(1,0);
\draw[thick,green,-] (0,0)--(0,1);
\draw[thick,blue,-] (0,0)--(1,.5);
\draw[thick,blue,-] (0,.5)--(1,1);
\node at (0,-.25) {};
\solidnodes
\node at (0,0) {};
\node at (0,.5) {};
\end{tikzpicture}
\hspace{1in}
\begin{tikzpicture}
\node (T) at (0,0) {$T$};
\node (1) at (-1,1.5) {$H_1$};
\node (2) at (0,1.5) {$H_2$};
\node (3) at (1,1.5) {$H_3$};
\node (11) at (-1,3) {$(1,1)$};
\node (-11) at (1,3) {$(1,-1)$};
\draw[-] (T)--(1);
\draw[-] (T)--(2);
\draw[-] (T)--(3);
\draw[-] (1)--(11);
\draw[-] (1)--(-11);
\draw[-] (2)--(11);
\draw[-] (3)--(11);
\draw[-] (3)--(-11);
\solidnodes
\end{tikzpicture}
\caption{The arrangement $\A$ from Example \ref{ex:ss} is depicted on the left,
with $H_1$ in \textcolor{green}{green}, $H_2$ in \textcolor{red}{red}, and $H_3$
in \textcolor{blue}{blue}. Its poset of layers $\P(\A)$ is depicted on the right.}
\label{fig:ex:ss}
\end{figure}
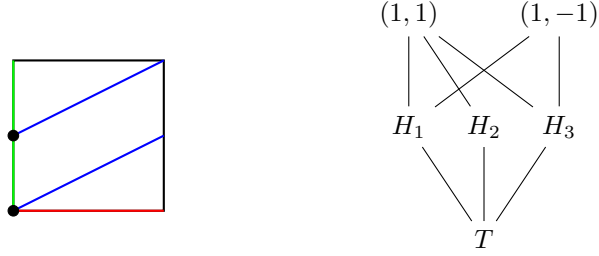

\end{example}

\begin{definition}
\label{def:Y} 
A subgroup $Y$ of $T$ will be called \defh{admissible} if there is a direct sum decomposition $\Gamma = \Gamma' \oplus \Gamma''$ such that $\Gamma'$ has rank $1$ and
$Y$ is the image of the injection $\varepsilon^*:\Hom(\Gamma',\G)\to\Hom(\Gamma,\G)$ induced by the projection $\varepsilon:\Gamma\to\Gamma'$. 
Choose a generator $\Gamma'=\langle\beta_0\rangle$, and define for $\alpha\in\Gamma$ a nonnegative integer $c(\alpha)$ such that $\varepsilon(\alpha)=\pm c(\alpha)\beta_0$. 
\end{definition}

If $Y$ is admissible, the corresponding projection \[p:T\to
T/Y\cong\Hom(\Gamma/\Gamma',\G)\] is a
section of the map induced by the quotient \[q:\Gamma\to\Gamma/\Gamma'.\]

This allows us to define the sub-arrangement
\[\A_Y := \{\alpha\in\A\st H_\alpha\supseteq Y\} = 
\{\alpha\in\A \st c(\alpha)=0\}\]

\begin{remark}\label{rem:aay}
The set of atoms $A(\A_Y)$ consists of all connected components of the $H_\alpha$ with $\alpha\in \A_Y$. These are the atoms of $\A$ that either contain $Y$ or are disjoint from it. 
For any $\alpha\not\in\A_Y$, every connected component of $H_\alpha$ will intersect $Y$ nontrivially. In particular, the poset of layers $\P(\A_Y)$ may be viewed as a subposet of $\P(\A)$. Moreover, if $Y\in \P(\A)$, then the maximal elements of $\P(\A_Y)$ are cosets of $Y$.
\end{remark}

 The set
\[\qA := q(\A_Y)\subseteq\Gamma/\Gamma'\]
defines an arrangement in
$T/Y$. The map $p:T\to T/Y$ restricts to a map on arrangement complements
$\pr:M(\A)\to M(\qA)$ and induces an isomorphism of posets \[\P(\A_Y)\cong\P(\qA).\]

Following the terminology of Terao \cite{teraofibthm}, associated to the arrangement $\A$ and the projection $p$ we define the {\em horizontal set} by 
\[\Hor := \{X\in\P_{>T} \st p(X) = T/Y\},\]
the {\em bad set} by
\[\Bad := \bigcup_{\substack{X\in\P_{>T}\\ X\notin\Hor}} p(X)\cap M(\qA),\]
and for $t\in T$,
\[\P_t := \{X\in\P_{>T}\st(t+Y)\cap X\neq\emptyset\}.\]

\begin{example}\label{ex:fib:good}
Consider the arrangement from \Cref{ex:ss} (see also \Cref{fig:ex:ss}) with $\Gamma=\Z^2$, 
$\A=\{\alpha_1=(1,0),\,\alpha_2=(0,1),\,\alpha_3=(1,2)\}$,
and $\G=S^1\times\R^v$.
Abbreviate $H_i=H_{\alpha_i}$ for $i=1,2,3$, and let $Y=H_1$.
Then $\A_Y=\{\alpha_1\}$ and the projection $M(\A)\to M(\qA)$ is depicted in
Figure \ref{fig:fib:good}.

As the picture suggests, this map is a fibration with fiber homeomorphic to $T$
with three points removed.
In this case, the bad set is empty ($\Bad=\emptyset$) while the horizontal set
and $\P_t$ (for $t\in M(\A)$) are both equal to $\{H_2,H_3\}$.

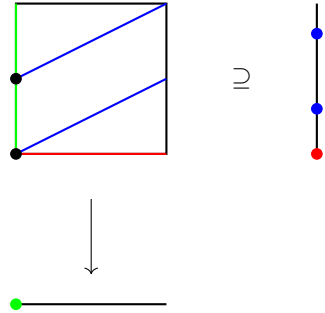
\begin{figure}[ht]
\begin{tikzpicture}[scale=2]
\draw[->] (.5,-.3)--(.5,-.8);
\node at (1.5,.5) {$\supseteq$};
%% top
\draw[thick,-] (0,0)--(1,0)--(1,1)--(0,1)--(0,0);
\draw[thick,red,-] (0,0)--(1,0);
\draw[thick,green,-] (0,0)--(0,1);
\draw[thick,blue,-] (0,0)--(1,.5);
\draw[thick,blue,-] (0,.5)--(1,1);
\node at (0,-.25) {};
\node[green] at (-.15,.75) {\scriptsize $H_1$};
\node[blue] at (.35,.35) {\scriptsize $H_3$};
\node[red] at (.75,-.15) {\scriptsize $H_2$};
\solidnodes
\node at (0,0) {};
\node at (0,.5) {};
%% bottom
\draw[thick,-] (0,-1) -- (1,-1);
\node[green,fill=green] at (0,-1) {};
%% fiber
\draw[thick,-] (2,0)--(2,1);
\node[red,fill=red] at (2,0) {};
\node[blue,fill=blue] at (2,.3) {};
\node[blue,fill=blue] at (2,.8) {};
\end{tikzpicture}
\caption{The restriction of the projection $S^1\times S^1\to S^1$ to the
complement of the arrangement $\A$ from \Cref{ex:fib:good} is a fibration
whose fibers are homeomorphic to the circle $S^1$ with three punctures.}
\label{fig:fib:good}
\end{figure}
\end{example}

\begin{example}\label{ex:fib:bad}
Consider again the arrangement $\A$ from \Cref{ex:fib:good}, but take $Y=H_2$.
The projection $M(\A)\to M(\qA)$ is depicted in Figure \ref{fig:fib:bad}, from
which it is evident that this map is not a fibration. Indeed, the fiber over a
point $t\in M(\qA)$ is homeomorphic to $T$ with two punctures, except the one
case that $t=-1$ and the fiber is homeomorphic to $T$ with a single puncture.

In this example, the bad set is nonempty: $\Bad=\{-1\}$, the horizontal set is
$\Hor=\{H_1,H_3\}$, and the set $\P_t$ is equal to $\{H_1,H_3\}$ for all $t\in
M(\qA)$ except $t=-1$, for which we have $\P_{-1}=\{H_1,H_3,(1,-1)\}$.

\begin{figure}[ht]
\begin{tikzpicture}[scale=2]
%% misc
\foreach \x in {0.5,2,2.5} \draw[->] (\x,-.3)--(\x,-.8);
\node at (1.5,.5) {$\supseteq$};
\node at (2,-1) {\scriptsize $-1$};
\node at (2.5,-1) {\scriptsize $t\neq -1$};
%% top
\draw[thick,-] (0,0)--(1,0)--(1,1)--(0,1)--(0,0);
\draw[thick,red,-] (0,0)--(0,1);
\draw[thick,green,-] (0,0)--(1,0);
\draw[thick,blue,-] (0,0)--(.5,1);
\draw[thick,blue,-] (.5,0)--(1,1);
\node at (0,-.25) {};
\solidnodes
\node at (0,0) {};
\node at (.5,0) {};
%% bottom
\draw[thick,-] (0,-1) -- (1,-1);
\node[red,fill=red] at (0,-1) {};
%% fibers
\draw[thick,-] (2,0)--(2,1);
\draw[thick,-] (2.5,0)--(2.5,1);
\node[cyan,fill=cyan] at (2,0) {};
\node[green,fill=green] at (2.5,0) {};
\node[blue,fill=blue] at (2.5,.3) {};
\end{tikzpicture}
\caption{The restriction of the projection $S^1\times S^1\to S^1$ (left) is not a fibration, as indicated by the two non-homeomorphic fibers
presented (right). See \Cref{ex:fib:bad}.}
\label{fig:fib:bad}
\end{figure}
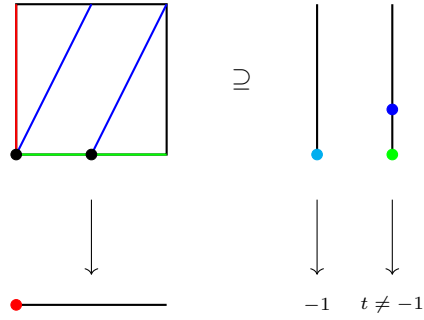
\end{example}

\subsection{Characterization of fibrations}
In this section, we describe conditions on an admissible subgroup $Y$ which will imply that the map $\pr:M(\A)\to M(\qA)$ is a fiber bundle (see \Cref{prop:Ygood}).

\begin{lemma}\label{lem:calpha}
If $\alpha\in \A\setminus \A_Y$, then for any $t\in T$ the intersection $(t+Y)\cap H_\alpha$ is a set of $c(\alpha)^d$ points. 
If $\alpha\in\A_Y$, then for any $t\notin H_\alpha$ the intersection $(t+Y)\cap H_\alpha$ is empty.
\end{lemma}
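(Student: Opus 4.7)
The plan is to parametrize the coset $t+Y$ by the isomorphism $Y\cong\Hom(\Gamma',\G)\cong\G$, translate membership in $H_\alpha$ into an equation in $\G$, and then count solutions using the explicit decomposition $\G\cong(S^1)^d\times\R^v$. Concretely, for $y=\varepsilon^*(s)\in Y$ with $s\in\Hom(\Gamma',\G)$, the value $(t+y)(\alpha)$ equals $t(\alpha)+s(\varepsilon(\alpha))=t(\alpha)\pm c(\alpha)\,s(\beta_0)$, so $t+y\in H_\alpha$ if and only if $c(\alpha)\,s(\beta_0)=\mp t(\alpha)$ in $\G$.

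For the second statement, I would note that if $\alpha\in\A_Y$ then $c(\alpha)=0$, the equation collapses to $t(\alpha)=0$, and the assumption $t\notin H_\alpha$ makes this impossible; hence $(t+Y)\cap H_\alpha=\emptyset$. This half is essentially immediate from the definitions.

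For the first statement, assume $c(\alpha)\neq 0$ and study the multiplication-by-$c(\alpha)$ endomorphism $m_{c(\alpha)}\colon\G\to\G$. Since $s(\beta_0)$ ranges freely over $\G$ as $y$ ranges over $Y$, the number of points in $(t+Y)\cap H_\alpha$ equals $|m_{c(\alpha)}^{-1}(\mp t(\alpha))|$. On each $S^1$ factor, multiplication by the nonzero integer $c(\alpha)$ is a surjective covering of degree $|c(\alpha)|$; on each $\R$ factor it is a bijection. The factors are independent, so $m_{c(\alpha)}$ is surjective with every fiber of cardinality $|c(\alpha)|^d\cdot 1^v=c(\alpha)^d$, giving the claimed count (with the convention $c(\alpha)\geq 0$).

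The only subtle point is confirming surjectivity of $m_{c(\alpha)}$ on $\G$, which I expect to be the sole ``obstacle'' — and it is handled factor by factor as above. Nothing else in the argument goes beyond the definition of $c(\alpha)$ and elementary facts about endomorphisms of $S^1$ and $\R$; in particular no assumption on $\A$ beyond what is already in force is used.
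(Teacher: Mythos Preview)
Your proposal is correct and follows essentially the same approach as the paper: both reduce the problem to solving $c(\alpha)\cdot g = h$ in $\G$ and count solutions using the decomposition $\G\cong(S^1)^d\times\R^v$. Your presentation is marginally more streamlined—you analyze the fibers of the endomorphism $m_{c(\alpha)}$ directly, whereas the paper first treats the case $t=0$ (computing the kernel of $m_{c(\alpha)}$) and then exhibits an explicit bijection between $Y\cap H_\alpha$ and $(t+Y)\cap H_\alpha$—but the underlying argument is the same.
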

\begin{proof} Since $\Gamma'$ is a direct summand of $\Gamma$ we have a direct sum decomposition $\Gamma=\Gamma'\oplus\Gamma''$, with $\Gamma''=\Gamma/\Gamma'$. Recall from \Cref{def:Y} our choice of  $\beta_0$ such that $\Gamma'=\angles{\beta_0}$. Without loss of generality suppose that for our $\alpha\in \Gamma$ the nonnegative integer $c(\alpha)$ satisfies $$\alpha=(c(\alpha)\beta_0) + \alpha''$$ with $\alpha''\in \Gamma''$.

The layer $Y\subseteq T$ consists of all homomorphisms $x\in T=\Hom(\Gamma,\G)$ with $x(\Gamma'')=0$. Thus, 
$$Y\cap H_\alpha =\{x\in T : x(\Gamma'')=0 \textrm{ and } c(\alpha)x(\beta_0)=0\}$$ 
(the second condition is equivalent to $x(\alpha)=0$).
This intersection thus equals $Y$ when $\alpha\in \A_Y$ (i.e, when $c(\alpha)=0$). Otherwise, it is in bijection with the possible choices for $x(\beta_0)$, namely of the set of all $g\in \G$ with $c(\alpha)g=0$. This set has $c(\alpha)^d$ elements (as finite-order elements of $\G$ must lay in the factor $(S^1)^d$).

Now let $t\in T$ and consider
$$
(t+Y)\cap H_\alpha =
\{
t+x\in T : x(\Gamma'')=0\textrm{ and }
(t+x)(\alpha)=0
\}
$$
\def\lawG{+_{\G}}
\def\idG{0_\G}
The last condition is equivalent to $c(\alpha)(t(\beta_0)\lawG x(\beta_0) )\lawG t(\alpha'') = 0$ (here $\lawG$ denotes the group operation in $\G$). 
If $\alpha\in \A_Y$ then $c(\alpha)=0$ and the latter equation can only be satisfied if $t(\alpha'')=0$, i.e., if $t\in H_\alpha$. The second part of the claim follows.

If $\alpha\in \A\setminus\A_Y$, then $c(\alpha)\neq 0$ and so there is $g\in \G=(S^1)^d\times \R^v$ such that $c(\alpha)(t(\beta_0)\lawG g)=-t(\alpha'')$. Let $\overline{x}\in T$ be defined by setting $\overline{x}(\Gamma'')=\{0\}$ and  $\overline{x}(\beta_0)=g$. Then the assignment $Y\cap H_\alpha \to (t+Y)\cap H_\alpha$, $x\mapsto (t+x+\overline{x})$ is a bijection and shows that $(t+Y)\cap H_\alpha$ and $Y\cap H_\alpha$ have the same cardinality. 
\end{proof}

\begin{lemma}\label{lem:Hor}
The horizontal set with respect to the projection $p: T\to T/Y$ is
$$\Hor = A(\A\setminus\A_Y).$$
\end{lemma}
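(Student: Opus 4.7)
The plan is to establish the two inclusions $A(\A\setminus\A_Y)\subseteq\Hor$ and $\Hor\subseteq A(\A\setminus\A_Y)$ separately, exploiting the fact that $p\st T\to T/Y$ is a group homomorphism, together with \Cref{lem:calpha} and the irredundancy hypothesis.

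For $A(\A\setminus\A_Y)\subseteq\Hor$, I would take a connected component $X$ of $H_\alpha$ with $c(\alpha)\neq 0$. Denoting by $H_\alpha^0$ the identity component of $H_\alpha$, write $X=H_\alpha^0+t$. Since $p$ is a homomorphism, $p(X)=p(H_\alpha^0)+p(t)$, and it suffices to show $H_\alpha^0+Y=T$. This is a dimension count: \Cref{lem:calpha} at $t=0$ makes $Y\cap H_\alpha$ finite, so $H_\alpha^0\cap Y$ is discrete; meanwhile $\dim H_\alpha^0+\dim Y=(d+v)(\rk\Gamma-1)+(d+v)=\dim T$. Hence the closed subgroup $H_\alpha^0+Y$ has full dimension, is therefore open, and equals $T$ by connectedness.

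For the reverse inclusion, let $X\in\Hor$ and set $S=\{\alpha\in\A\st X\subseteq H_\alpha\}$, so that $X$ is a connected component of $\bigcap_{\alpha\in S}H_\alpha$. The condition $p(X)=T/Y$ forces $\dim X\ge\dim(T/Y)=(d+v)(\rk\Gamma-1)$; since $X\neq T$ the set $S$ is nonempty, and containment of $X$ in any single $H_\alpha$ bounds $\dim X$ from above by $(d+v)(\rk\Gamma-1)$. Thus $X$ has the dimension of an atom, which forces the identity component of $\bigcap_{\alpha\in S}H_\alpha$ to coincide with the identity component of each $H_\alpha$ for $\alpha\in S$; irredundancy then yields $|S|=1$, so $X\in A(\A)$. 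Finally, the unique $\alpha\in S$ cannot lie in $\A_Y$: otherwise $Y\subseteq H_\alpha$, whence $Y\subseteq H_\alpha^0$ by connectedness of $Y$, so $p(H_\alpha^0)$ has dimension strictly less than $\dim(T/Y)$, contradicting $p(X)=T/Y$.

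The main obstacle lies in the reverse inclusion, where one must successively rule out the possibilities that $X$ is a non-atomic layer (handled by the dimension matching above) and that several hyperplanes share $X$ as a common component (handled by irredundancy, once the dimensions match). The rest is a routine bookkeeping of dimensions of closed Lie subgroups of $T$ and the standard formula $\dim(K_1+K_2)=\dim K_1+\dim K_2-\dim(K_1\cap K_2)$.
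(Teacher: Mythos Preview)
Your proof is correct and follows essentially the same strategy as the paper: reduce to the identity component (a subgroup of $T$) and argue by a dimension count that the sum with $Y$ is all of $T$ precisely when the layer is an atom not containing $Y$. The paper's version is more terse, packaging the whole argument into the biconditional $X_0+Y=T \iff X_0\not\supseteq Y$ and $\rk(X_0)=1$ using the poset rank directly; this sidesteps your explicit appeal to irredundancy, since rank-$1$ layers are atoms by definition regardless of how many $H_\alpha$ they sit inside.
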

\begin{proof}
Let $X\in\P_{>T}$, and let $X_0=t+X$ be a translate of $X$ which is a (proper) subgroup of $T$. Since every $H_\alpha$ contains the identity, $X_0\in \P$.  Then 
\begin{align*}
X\in\Hor
\iff p(X) = T/Y &\iff p(X_0)=T/Y
\iff X_0+Y=T \\&\iff X_0\not\supseteq Y \text{ and } \rk(X_0)=1\\
&\iff X_0\in A(\A\setminus \A_Y)\iff X\in A(\A\setminus \A_Y),
\end{align*}
where the last claim holds because $X$ and $X_0$ are necessarily connected components of the same $H_\alpha$ (for some $\alpha\in\A\setminus\A_Y$).
\end{proof}

\begin{lemma}\label{lem:subsets}
For any $t\in M(\A)$,
\begin{align}
\Hor&\subseteq\P_t \label{eq:HorPt}\\
&\subseteq\{X\in\P\colon p(X)\cap M(\qA)\neq\emptyset\} \label{eq:PtBad}\\
&=\{X\in\P\colon X\wedge Y'=T \text{ for all }
Y'\in\tr{Y}\} \label{eq:BadMeet}
\end{align}
\end{lemma}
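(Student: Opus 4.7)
The plan is to prove each containment/equality separately, mostly by unpacking definitions; the only technical point is the backward direction of the last equality, which I handle via an irreducibility/Baire argument using that every layer is a translate of a connected Lie subgroup of $T$.

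For \eqref{eq:HorPt}, let $X\in\Hor$, so $p(X)=T/Y$. Then for any $t\in T$, the element $p(t)\in T/Y$ has a preimage in $X$, that is, there is $s\in X$ with $p(s)=p(t)$, equivalently $s\in t+Y$. Hence $(t+Y)\cap X\neq\emptyset$, i.e., $X\in\P_t$ (this works for any $t\in T$, not merely $t\in M(\A)$).

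For \eqref{eq:PtBad}, let $X\in\P_t$ and pick $s\in(t+Y)\cap X$. Then $p(t)=p(s)\in p(X)$, so it suffices to show $p(t)\in M(\qA)$. Working in the decomposition $\Gamma=\Gamma'\oplus\Gamma''$, I will record once and for all the identity $t(\alpha)=p(t)(q(\alpha))$ for every $\alpha\in\A_Y$ (which is immediate because $c(\alpha)=0$ means $\alpha\in\Gamma''$). Since $t\in M(\A)$ gives $t(\alpha)\neq 0$ for all $\alpha\in\A_Y$, this yields $p(t)\notin H_{q(\alpha)}$ for every $q(\alpha)\in\qA$, so $p(t)\in p(X)\cap M(\qA)$.

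For \eqref{eq:BadMeet}, I prove both containments by contraposition. Recall from \Cref{rem:aay} that $\tr{Y}=\max\P(\A_Y)$ consists of cosets of $Y$, and note that a coset $Y'=t_0+Y$ is contained in $H_\alpha$ iff $\alpha\in\A_Y$ and $t_0\in H_\alpha$. Now suppose $X\wedge Y'\neq T$ for some $Y'\in\tr{Y}$, so there is a layer $Z\neq T$ with $X\cup Y'\subseteq Z$. Pick any $\alpha\in\A$ with $Z\subseteq H_\alpha$; the observation just made forces $\alpha\in\A_Y$, hence $X\subseteq H_\alpha$. Using the identity from the previous paragraph, $p(X)\subseteq H_{q(\alpha)}$, so $p(X)\cap M(\qA)=\emptyset$. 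Conversely, suppose $p(X)\cap M(\qA)=\emptyset$; again by the identity, $X\subseteq\bigcup_{\alpha\in\A_Y}H_\alpha$. Here comes the main obstacle: I need to upgrade this to $X\subseteq H_\alpha$ for a single $\alpha\in\A_Y$. Since every layer is a translate of a connected closed (hence Lie) subgroup of $T$, $X$ itself is such a translate; each $X\cap H_\alpha$ is then either all of $X$ or a lower-dimensional closed submanifold, hence nowhere dense in $X$. Since $X$ is a Baire space and is the finite union of these closed subsets, one of them must equal $X$, giving $X\subseteq H_\alpha$ for some $\alpha\in\A_Y$. Let $Z$ be the connected component of $H_\alpha$ containing $X$, pick $x\in X$, and set $Y':=x+Y$. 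Then $Y'\subseteq H_\alpha$ (because $x\in H_\alpha$ and $Y\subseteq H_\alpha$) and $Y'$ is connected and meets $Z$ at $x$, so $Y'\subseteq Z$. Hence $Z\neq T$ is a common layer containing $X$ and $Y'$, so $X\wedge Y'\neq T$. This completes the equality and the lemma.
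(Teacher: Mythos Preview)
Your arguments for \eqref{eq:HorPt} and \eqref{eq:PtBad} are correct and essentially match the paper's (your direct argument for \eqref{eq:HorPt} is in fact more self-contained than the paper's, which routes through \Cref{lem:calpha,lem:Hor}). Your Baire/irreducibility step in the converse of \eqref{eq:BadMeet} is a careful justification of something the paper simply asserts (namely that $p(X)\cap M(\qA)=\emptyset$ forces $p(X)\subseteq p(H)$ for a single $H\in A(\A_Y)$), so that part is a genuine improvement in rigor.

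There is, however, a real gap at the very end. Having found $\alpha\in\A_Y$ with $X\subseteq H_\alpha$ and the component $Z$ of $H_\alpha$ containing $X$, you set $Y':=x+Y$ for an arbitrary $x\in X$ and then claim $X\wedge Y'\neq T$. But the statement requires $Y'\in\tr{Y}=\max\P(\A_Y)$, and an arbitrary coset $x+Y$ need not lie in $\P(\A_Y)$ at all. \Cref{rem:aay} only says that every element of $\max\P(\A_Y)$ is a coset of $Y$ (and even that needs $Y\in\P(\A)$, which is not assumed here); it does not say every coset of $Y$ is a layer. For a concrete failure, take $\G=S^1$, $\Gamma=\Z^3$, $\A=\{e_1,e_2,e_3\}$, and $Y=\{1\}\times\{1\}\times S^1$ corresponding to $\Gamma'=\langle e_3\rangle$. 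Then $\A_Y=\{e_1,e_2\}$ and $\max\P(\A_Y)=\{Y\}$. The layer $X=H_{e_1}\cap H_{e_3}=\{1\}\times S^1\times\{1\}$ satisfies $p(X)\cap M(\qA)=\emptyset$ and $X\subseteq H_{e_1}$; choosing $x=(1,i,1)$ gives $Y'=x+Y=\{1\}\times\{i\}\times S^1$, which is not in $\max\P(\A_Y)$.

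The fix is immediate and is exactly what the paper does: $Z$ is an atom of $\P(\A_Y)$, so choose any $Y'\in\max\P(\A_Y)$ with $Y'\geq Z$ in $\P(\A_Y)$ (i.e., $Y'\subseteq Z$). Then $Z$ is a nontrivial layer containing both $X$ and $Y'$, giving $X\wedge Y'\neq T$ as desired.
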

\begin{proof}
Combining \Cref{lem:calpha,lem:Hor}, we have
$\Hor=A(\A\setminus\A_Y)\subseteq\P_t$, establishing \eqref{eq:HorPt}.

For \eqref{eq:PtBad}, consider $X\in\P_t$ and let $u\in (t+Y)\cap X$. Then
$u\in X$ implies $p(u)\in p(X)$, and $u\in t+Y$ implies $p(u)=p(t)\in M(\qA)$.
Thus, $p(u)\in p(X)\cap M(\qA)$.

For \eqref{eq:BadMeet}, first assume that $X\in\P$ and $X\wedge Y'\neq T$ for
some $Y'\in\tr{Y}$.
Then there is some $H\in A(\P)$ such that $X\subseteq H$ and
$Y'\subseteq H$. Now, $X\subseteq H$ implies $p(X)\cap M(\qA)\subseteq
p(H)\cap M(\qA)$. Since $Y'\subseteq H$, we must have $p(H)\cap
M(\qA)=\emptyset$. Therefore, $p(X)\cap M(\qA)=\emptyset$.

Conversely, suppose that $p(X)\cap M(\qA)=\emptyset$. Then there exists $H\in
A(\A_Y)$ such that $p(X)\subseteq p(H)$. But then $X\subseteq H$ and
$Y'\subseteq H$ for some $Y'\in\tr{Y}$, implying that $X\wedge Y'\neq
T$.
\end{proof}

The reason the set $\Bad$ is named such is that points \emph{not} in this set
have ``good'' fibers, as we will see in the following lemma.
\begin{lemma}\label{lem:tgood}
For any $t\in M(\A)$, the following statements are equivalent.
\begin{enumerate}
\item\label{lem:tgood:Bad} $p(t)\notin\Bad$
\item\label{lem:tgood:Pt=Hor} $\P_t=\Hor$
\item\label{lem:tgood:Int} For any distinct $\alpha,\beta\in\A\setminus\A_Y$,
the intersection $(t+Y)\cap H_\alpha\cap H_\beta$ is empty.
\item\label{lem:tgood:fiber} The fiber $\pr^{-1}(p(t))$ is homeomorphic to $\G$
with $\sum_{\alpha\in\A\setminus\A_Y} c(\alpha)^d$ points removed.
\end{enumerate}
\end{lemma}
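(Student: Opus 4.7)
The plan is to prove the cyclic equivalence \eqref{lem:tgood:Bad} $\iff$ \eqref{lem:tgood:Pt=Hor} $\iff$ \eqref{lem:tgood:Int} $\iff$ \eqref{lem:tgood:fiber}, essentially by unpacking definitions with the aid of Lemmas \ref{lem:calpha}, \ref{lem:Hor}, and \ref{lem:subsets}. Throughout we use that $p(t)\in M(\qA)$ whenever $t\in M(\A)$. For \eqref{lem:tgood:Bad} $\iff$ \eqref{lem:tgood:Pt=Hor}: by definition $p(t)\in\Bad$ iff there exists $X\in\P_{>T}\setminus\Hor$ with $p(t)\in p(X)\cap M(\qA)$, which (given $p(t)\in M(\qA)$) reduces to $(t+Y)\cap X\neq\emptyset$, i.e.\ $X\in\P_t\setminus\Hor$. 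Combined with the inclusion $\Hor\subseteq\P_t$ from Lemma \ref{lem:subsets}, this is exactly the failure of \eqref{lem:tgood:Pt=Hor}.

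For \eqref{lem:tgood:Pt=Hor} $\iff$ \eqref{lem:tgood:Int}, Lemma \ref{lem:Hor} identifies $\Hor$ with $A(\A\setminus\A_Y)$. If \eqref{lem:tgood:Int} fails, say $u\in (t+Y)\cap H_\alpha\cap H_\beta$ for distinct $\alpha,\beta\in\A\setminus\A_Y$, then irredundancy of $\A$ forces the connected component $X$ of $H_\alpha\cap H_\beta$ through $u$ to have rank at least two, placing $X$ in $\P_t\setminus\Hor$. Conversely, assume \eqref{lem:tgood:Int} and take $X\in\P_t$ with witness $u\in (t+Y)\cap X$. Every atom of $\P$ lying below $X$ in the poset order is a connected component of some $H_\gamma$ that contains $u$; if $\gamma\in\A_Y$ then Lemma \ref{lem:calpha} together with $t\notin H_\gamma$ would give $(t+Y)\cap H_\gamma=\emptyset$, a contradiction, so every such $\gamma$ lies in $\A\setminus\A_Y$. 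If $\rk(X)\geq 2$, the geometric lattice $\P_{\leq X}$ has at least two distinct atoms; by the previous observation these come from distinct $\alpha,\beta\in\A\setminus\A_Y$ (distinctness secured by irredundancy), and then $u\in (t+Y)\cap H_\alpha\cap H_\beta$ contradicts \eqref{lem:tgood:Int}. Hence $\rk(X)=1$ and $X\in\Hor$, giving $\P_t\subseteq\Hor$.

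For \eqref{lem:tgood:Int} $\iff$ \eqref{lem:tgood:fiber}, observe that
\[
\pr^{-1}(p(t)) = (t+Y)\setminus\bigcup_{\alpha\in\A}\bigl((t+Y)\cap H_\alpha\bigr).
\]
By Lemma \ref{lem:calpha}, the intersection is empty for $\alpha\in\A_Y$ (since $t\notin H_\alpha$) and consists of $c(\alpha)^d$ points for $\alpha\in\A\setminus\A_Y$. Since $t+Y$ is homeomorphic to $Y\cong\G$, the fiber is $\G$ minus a finite set whose total cardinality equals $\sum_{\alpha\in\A\setminus\A_Y} c(\alpha)^d$ precisely when these finite sets are pairwise disjoint, i.e.\ precisely when \eqref{lem:tgood:Int} holds. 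Since the number of punctures is a topological invariant of $\G$ minus finitely many points (detected, e.g.\ by Euler characteristic in every case relevant here), this closes the loop. The chief subtlety lies in step \eqref{lem:tgood:Pt=Hor} $\iff$ \eqref{lem:tgood:Int}, where the admissibility of $Y$, the irredundancy of $\A$, and Lemma \ref{lem:calpha} must be combined to rule out contributions to $\P_t$ coming from hyperplanes in $\A_Y$; once that case is excluded, the rest is a direct application of the geometric-lattice structure of $\P_{\leq X}$.
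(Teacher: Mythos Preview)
Your proof is correct and follows essentially the same route as the paper's: the same four implications are established using Lemmas~\ref{lem:calpha}, \ref{lem:Hor}, and \ref{lem:subsets} together with irredundancy, and your computation of the fiber matches the paper's. The only cosmetic differences are that you package \eqref{lem:tgood:Bad}$\iff$\eqref{lem:tgood:Pt=Hor} as a single biconditional (the paper does two contrapositives), you phrase the \eqref{lem:tgood:Int}$\Rightarrow$\eqref{lem:tgood:Pt=Hor} step via atoms of $\P_{\leq X}$ rather than directly choosing $\alpha,\beta$ with $X\subseteq H_\alpha\cap H_\beta$, and you add an explicit remark that the number of punctures is a homeomorphism invariant---a point the paper leaves implicit.
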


\begin{proof}\strut
\begin{description}
\imp{lem:tgood:Bad}{lem:tgood:Pt=Hor}
By \Cref{lem:subsets}, we need only show that $\P_t\subseteq\Hor$. Let
$X\in\P_t$, so that $(t+Y)\cap X\neq\emptyset$, and fix $u\in (t+Y)\cap X$.
Then since $u\in t+Y$, we have $p(u)=p(t)$. 
However, $u\in X$ implies $p(t)=p(u)\in p(X)$, and $t\in M(\A)$ implies $p(t)\in
M(\qA)$, so $p(t)\in p(X)\cap M(\qA)$. Since $p(t)\notin\Bad$, this implies
$X\in\Hor$.

\imp{lem:tgood:Pt=Hor}{lem:tgood:Bad}
Suppose that $p(t)\in\Bad$, and let $X\in\P_{>T}\setminus\Hor$ such that
$p(t)\in p(X)\cap M(\qA)$. Since $p(t)\in p(X)$, we have $u\in X$
such that $p(t)=p(u)$, hence $u=t+y$ for some $y\in Y$.
Thus, $(t+Y)\cap X\neq\emptyset$ and so $X\in\P_t$, contradicting $\P_t=\Hor$.

\imp{lem:tgood:Pt=Hor}{lem:tgood:Int}
If $(t+Y)\cap (H_\alpha\cap H_\beta)$ is nonempty, then there is a connected
component $X$ of $H_\alpha\cap H_\beta$ such that $X\in\P_t$. But if $\P_t=\Hor$
then \Cref{lem:Hor} implies that $X\in A(\A\setminus\A_Y)$. In particular, $X$ is a connected component of both $H_\alpha$ and $H_\beta$, contradicting
the assumption that $\A$ is irredundant.

\imp{lem:tgood:Int}{lem:tgood:Pt=Hor}
By \Cref{lem:subsets}, we need only show that $\P_t\subseteq\Hor$. 
Suppose not, and let $X\in\P_t\setminus\Hor$. 
By \Cref{lem:Hor}, we have
$\rk(X)>1$, so pick distinct $\alpha,\beta\in\A$ such that $H_\alpha\cap
H_\beta\supseteq X$. 
Then $(t+Y)\cap X\neq\emptyset$ implies that each of $(t+Y)\cap H_\alpha$, $(t+Y)\cap H_\beta$, and $(t+Y)\cap(H_\alpha\cap H_\alpha)$ is nonempty. The first two of these being nonempty implies $\alpha,\beta\in\A\setminus\A_Y$ via \Cref{lem:calpha}, which contradicts assumption \eqref{lem:tgood:Int}. 

\equ{lem:tgood:Int}{lem:tgood:fiber}
We have:
\begin{align*}
\pr^{-1}(p(t))
&= (t+Y)\cap M(\A)\\
&= (t+Y)\setminus \bigcup_{X\in\P_t} ((t+Y)\cap X)\\
&= (t+Y)\setminus \bigcup_{\alpha\in\A\setminus\A_Y} ((t+Y)\cap H_\alpha)
 %\text{ by Lemmas \ref{lem:Hor}  
%%and Lemma 
%and \ref{lem:subsets}\eqref{eq:HorPt}}
\\
&\cong \G\setminus\{k \text{ points}\},
 \text{ with } k = \left|\bigcup_{\alpha\in\A\setminus\A_Y} ((t+Y)\cap
H_\alpha)\right|
\end{align*}
where the third equality holds by \Cref{lem:Hor}  
%and Lemma 
and \Cref{lem:subsets}\eqref{eq:HorPt}. 
Then \Cref{lem:calpha} implies that $k=\sum_{\alpha\in\A\setminus\A_Y} c(\alpha)^d$ if and only if the sets
$(t+Y)\cap H_\alpha$ are pairwise disjoint, which is precisely condition \eqref{lem:tgood:Int}.
\end{description}
\end{proof}

When the statements in \Cref{lem:tgood} hold for all $t\in M(\A)$, we obtain a useful description of the horizontal set $\Hor$ and relate it back to M-ideals.
\begin{proposition}\label{prop:Ygood}
The following statements are equivalent.
\begin{enumerate}
\item\label{thm:Ygood:modular} 
$\P(\A_Y)$ is an M-ideal of $\P(\A)$ with $\rk(\P(\A_Y))=\rk(\P(\A))-1$.
\item\label{prop:Ygood:Hor} $\Hor = \{X\in\P\colon
X\wedge Y'=T \text{ for all } Y'\in\tr{Y}\}$.
\item\label{prop:Ygood:Pt=Hor} $\P_t=\Hor$ for any $t\in M(\A)$.
\item\label{prop:Ygood:Pt=Pu} $\P_t$ does not depend on $t\in M(\A)$.
\item\label{prop:Ygood:Bad} $\Bad=\emptyset$.
\end{enumerate}
\end{proposition}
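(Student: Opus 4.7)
The plan is to organize the five conditions into two passes. The first pass collapses the sandwich from \Cref{lem:subsets} via \Cref{lem:tgood} to establish the equivalences among (2), (3), (4), (5). The second pass is the combinatorial equivalence (1)$\Leftrightarrow$(2), hinging on property \ref{dagger} of \Cref{cor:niceM} and the lattice-level characterization in \Cref{cor:niceGL}.

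For the first pass: (3)$\Leftrightarrow$(5) is immediate from \Cref{lem:tgood} applied at every $t\in M(\A)$. The implication (3)$\Rightarrow$(4) is trivial; for (4)$\Rightarrow$(3), I will argue that $\Bad$ is a finite union of sets $p(X)\cap M(\qA)$ with $X\in\P_{>T}\setminus\Hor$, each a proper subset of $M(\qA)$ because $X\notin\Hor$ forces $p(X)\subsetneq T/Y$. Hence $M(\qA)\setminus\Bad$ is nonempty, so we may pick $t_0\in M(\A)$ with $p(t_0)\notin\Bad$; \Cref{lem:tgood} yields $\P_{t_0}=\Hor$, and the constancy hypothesis of (4) propagates this to every $t$. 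For (2)$\Leftrightarrow$(3), \Cref{lem:subsets} gives the sandwich $\Hor\subseteq\P_t\subseteq C$ where $C:=\{X\in\P\colon X\wedge Y'=T\text{ for all }Y'\in\tr Y\}$; collapse yields (2)$\Rightarrow$(3), and (3)$\Rightarrow$(2) follows by taking $X\in C$, selecting $s\in p(X)\cap M(\qA)$ (the equivalent description from \Cref{lem:subsets}), lifting $s$ to $t\in M(\A)$ (feasible by \Cref{lem:calpha}, since the fiber $s+Y$ meets $\bigcup H_\alpha$ in only finitely many points), and concluding $X\in\P_t=\Hor$.

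For (1)$\Rightarrow$(2), \Cref{cor:niceM} supplies \ref{dagger}. To show $C\subseteq\Hor$, take $X\in C$ with $X\neq T$. If $X$ is an atom in $A(\A_Y)$, purity of $\P(\A_Y)$ places $X\leq Y'$ for some $Y'\in\tr Y$, contradicting $X\wedge Y'=T$; if $X$ is an atom in $\Hor$ we are done; if $\rk X\geq 2$, choose distinct atoms $a_1,a_2\leq X$ in the geometric lattice $\P_{\leq X}$, and either a purity argument (when some $a_i\in A(\A_Y)$) or \ref{dagger} applied to an $x\in a_1\vee a_2$ with $x\leq X$ (when both $a_i\in\Hor$) produces an atom $a_3\in A(\A_Y)$ below $X$, again contradicting $X\wedge Y'=T$ for the $Y'\in\tr Y$ above $a_3$.

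For (2)$\Rightarrow$(1) I expect the main difficulty. The key observation is that (2) itself implies \ref{dagger} for $\P(\A_Y)$: if a rank-two layer $X$ covers two atoms of $\Hor$ but no atom of $A(\A_Y)$ lies below $X$, then $X\in C\setminus\Hor$, contradicting (2). Given \ref{dagger}, I then plan to verify the axioms of \Cref{def:modideal} for $\P(\A_Y)\subseteq\P(\A)$: order-ideal, purity, and join-closedness follow from the structure of $\P(\A_Y)$ as the poset of layers of a sub-arrangement (the decisive point is that $H_\alpha\supseteq X'$ for $X'\in\P(\A_Y)$ forces $\alpha\in\A_Y$, by the argument that a translate cannot fit inside a subgroup without its identity coset); the rank equality $\rk\P(\A_Y)=\rk\P(\A)-1$ is extracted from (2) by ruling out higher-rank members of $C\setminus\Hor$; axiom \Cref{def:mod:1} is verified directly from the geometry of $H_\alpha\cap y$; and axiom \Cref{def:mod:2} is obtained in each maximal interval $\P_{\leq x}$ by restricting \ref{dagger} to that interval and invoking \Cref{cor:niceGL}, which identifies \ref{dagger} with the existence of a corank-one modular element in a geometric lattice. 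Assembling \ref{dagger}, purity, and the rank condition into the full M-ideal structure of \Cref{def:modideal} is the principal obstacle.
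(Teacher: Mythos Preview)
Your proposal is correct and tracks the paper's argument closely, but packages the key step $(1)\Leftrightarrow(2)$ differently. For $(1)\Rightarrow(2)$ the paper invokes \Cref{lem:Qcomp} directly: that lemma already shows that any $X\in\P$ with $X\wedge Y'=T$ for all $Y'\in\tr Y$ must be an atom outside $A(\A_Y)$, which together with \Cref{lem:Hor} gives $C\subseteq\Hor$ in one stroke; your case analysis via \ref{dagger} unfolds the same content by hand. For $(2)\Rightarrow(1)$ the paper takes $x\in\max\P$, lets $W$ be the top of $\P(\A_Y)\cap\P_{\leq x}$, and shows directly that every complement of $W$ in $\P_{\leq x}$ lies in $C=\Hor$ and hence has rank one, so $W$ is modular of corank one; your route through \ref{dagger} and \Cref{cor:niceGL} is equivalent, since the proof of \Cref{cor:niceGL} \emph{is} precisely that complement argument. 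Either way the rank equality $\rk\P(\A_Y)=\rk\P-1$ falls out of the modularity of $W$, not before, so your remark about ``extracting'' it from $(2)$ beforehand is unnecessary. One small point to tighten in your $(4)\Rightarrow(3)$: ``finite union of proper subsets'' does not in general stay proper; what you need is that each $p(X)$ with $X\notin\Hor$ is a positive-codimension closed coset in $T/Y$, so the finite union is nowhere dense in $M(\qA)$. The paper itself simply asserts $\Bad\neq M(\qA)$ without justification, so your argument is already more explicit, but should appeal to codimension rather than mere properness.
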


\begin{proof}
\
\begin{description}

\imp{thm:Ygood:modular}{prop:Ygood:Hor}
This follows by \Cref{lem:Qcomp,lem:Hor}.

\imp{prop:Ygood:Hor}{thm:Ygood:modular}
It is clear that $\P(\A_Y)$ is a pure, join-closed, order ideal of $\P:=\P(\A)$.
\Cref{def:mod:1} follows from \Cref{lem:calpha}.

To show that \Cref{def:mod:2} holds, let $X\in\max(\P)$. Since $\P(\A_Y)$ is join-closed, the set $\P(\A_Y)\cap\P_{\leq X}$ is a sublattice of $\P_{\leq X}$ and thus has a unique maximum element $W$. 
Let $Z\in\P_{\leq X}$ be a complement of $W$, so that $W\wedge Z=T$ and $W\vee Z=X$ in $\P_{\leq X}$.
Let $Y'\in\tr{Y}=\max\P(\A_Y)$ and $U\in Y'\wedge Z$. 
Then $U\in\P(\A_Y)\cap\P_{\leq X}$ and hence $U\leq W$.
But $W\wedge Z=T$, so the fact that $U\leq W$ and $U\leq Z$ implies $U=T$. 
Therefore, $Y'\wedge Z=T$ for any $Y'\in\tr{Y}$, which by 
\eqref{prop:Ygood:Hor} and
\Cref{lem:Hor} implies
that $\rk(Z)=1$.
Since any complement $Z$ of $W$ in $\P_{\leq X}$ has rank one, the complements of $W$ form an antichain. This means that $W$ is a modular element of $\P_{\leq X}$.
Moreover, modularity of $W$ in $\P_{\leq X}$ implies $\rk(W) = \rk(X)+\rk(T)-\rk(Z) = \rk(\P)-1$.
In particular, $W\in\max\P(\A_Y)$ and the rank of $\P(\A_Y)$ is equal to $\rk(\P)-1$.

\imp{prop:Ygood:Hor}{prop:Ygood:Pt=Hor}
This follows immediately from \Cref{lem:subsets}.

\imp{prop:Ygood:Pt=Hor}{prop:Ygood:Pt=Pu}
Immediate.

\imp{prop:Ygood:Pt=Pu}{prop:Ygood:Bad}
Note that $\Bad\neq M(\qA)$, and so there is some $t_1\in M(\A)$ such that
$p(t_1)\notin\Bad$ and hence $\P_{t_1}=\Hor$ (by \Cref{lem:tgood}).
Then for any $t_2\in M(\A)$, we have $\P_{t_2}=\P_{t_1}=\Hor$ and hence
$p(t_2)\notin\Bad$ (by \Cref{lem:tgood}). Thus, $\Bad=\emptyset$.

\imp{prop:Ygood:Bad}{prop:Ygood:Hor}
If $\Bad=\emptyset$, then $\{X\in\P_{>T}\colon p(X)\cap
M(\qA)\neq\emptyset\}\subseteq\Hor$. 
The result follows by \Cref{lem:subsets}.

\end{description}
\end{proof}

\begin{example}
Recall the arrangement $\A$ from \Cref{ex:fib:good,ex:fib:bad}. As observed
there, the subgroup $Y=H_1$ satisfies the conditions \eqref{prop:Ygood:Pt=Hor} and \eqref{prop:Ygood:Bad} in \Cref{prop:Ygood}, while the subgroup $H_2$ does not.
This agrees with our observation in \Cref{ex:ss:mod}
that in the poset
of layers $\P=\P(\A)$, the order ideal $\P_{\leq H_1}$ is an M-ideal while
$\P_{\leq H_2}$ is not. 
\end{example}

\subsection{Fiber bundles}
We now are in the position to extend Terao's Fibration Theorem \cite{teraofibthm} from hyperplane arrangements to abelian arrangements.
\begin{theorem}[Fibration Theorem]\label{thm:fib}
The following statements are equivalent. 
\begin{enumerate}
\item\label{thm:fib:modular} 
$\P(\A_Y)$ is an M-ideal of $\P(\A)$ with $\rk(\P(\A_Y))=\rk(\P(\A))-1$.
\item\label{thm:fib:fiber} There exists an integer $\ell$ such that for any $u\in
M(\qA)$, the fiber $\pr^{-1}(u)$ is homeomorphic to $\G$ with $\ell$ points
removed.
\item\label{thm:fib:fib} $\pr:M(\A)\to M(\qA)$ is a fiber bundle.
\end{enumerate}
\end{theorem}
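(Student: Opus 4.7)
The plan is to cycle through the implications $\eqref{thm:fib:modular} \Rightarrow \eqref{thm:fib:fib} \Rightarrow \eqref{thm:fib:fiber} \Rightarrow \eqref{thm:fib:modular}$, leveraging the equivalences already established in \Cref{prop:Ygood} and the fiber description of \Cref{lem:tgood}. The implication $\eqref{thm:fib:fib} \Rightarrow \eqref{thm:fib:fiber}$ is immediate, since every fiber bundle has pairwise homeomorphic fibers.

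For $\eqref{thm:fib:fiber} \Rightarrow \eqref{thm:fib:modular}$, I would show that $\Bad = \emptyset$ by contradiction. The set $\Bad$ is a finite union of closed proper subsets of $M(\qA)$ (images of proper layers under $p$), so there exists $t_0 \in M(\A)$ with $p(t_0) \notin \Bad$. By \Cref{lem:tgood}, $\pr^{-1}(p(t_0))$ is $\G$ with exactly $\sum_{\alpha \in \A \setminus \A_Y} c(\alpha)^d$ points removed, and essentiality of $\A$ forces at least one $\alpha \notin \A_Y$ (hence $c(\alpha) > 0$), so this number is positive and must equal the $\ell$ of \eqref{thm:fib:fiber}. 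If $\Bad \neq \emptyset$, any $t_1 \in \pr^{-1}(\Bad)$ satisfies $p(t_1) \in \Bad$, and \Cref{lem:tgood} again implies that the fiber over $p(t_1)$ has strictly fewer than $\ell$ punctures (coincidences among the sets $(t_1+Y) \cap H_\alpha$ reduce the count). Since $\G$ with different numbers of points removed is non-homeomorphic (distinguished by first homology, or by $\pi_0$ in dimension one), this contradicts \eqref{thm:fib:fiber}. Thus $\Bad = \emptyset$, and \Cref{prop:Ygood} gives \eqref{thm:fib:modular}.

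For $\eqref{thm:fib:modular} \Rightarrow \eqref{thm:fib:fib}$, the plan is to construct explicit local trivializations of $\pr$. By \Cref{prop:Ygood}, the hypothesis yields $\Bad = \emptyset$. The section $\sigma: T/Y \to T$ from \Cref{def:Y} trivializes $p$ as a principal $Y$-bundle. By \Cref{lem:calpha}, for $\alpha \notin \A_Y$ the restriction $p|_{H_\alpha}: H_\alpha \to T/Y$ is a $c(\alpha)^d$-fold covering, while for $\alpha \in \A_Y$ the hyperplane $H_\alpha$ is disjoint from $p^{-1}(M(\qA))$. Because $\Bad = \emptyset$, sheets from different horizontal $\alpha$ remain pairwise disjoint over $M(\qA)$, so $\pr$ is obtained from the trivial $Y$-bundle $p|_{p^{-1}(M(\qA))}$ by removing a disjoint union of local covers.

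Given $u \in M(\qA)$, I would shrink a neighborhood $V$ of $u$ until each covering $p|_{H_\alpha}$ splits over $V$ into disjoint sections $\eta_{\alpha, s}: V \to H_\alpha$ with pairwise disjoint images for all $\alpha,s$. Under the trivialization $p^{-1}(V) \cong Y \times V$ induced by $\sigma$, the $\eta_{\alpha,s}$ become graphs of continuous maps $V \to Y$, so the punctures in $\pr^{-1}(v)$ vary continuously with $v \in V$. I would then construct a continuous family of self-homeomorphisms of $Y$ carrying the punctures at $u$ to those at $v$ by an ambient isotopy supported in small disjoint balls around each puncture, producing a local trivialization $\pr^{-1}(V) \cong \pr^{-1}(u) \times V$. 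The main technical point is this final step --- a standard ``fiber bundle minus finitely many disjoint sections'' construction --- where continuous dependence of the isotopies on $v$ can be arranged via a partition-of-unity argument on $V$ or, exploiting the Lie group structure on $Y \cong \G$, by local translations bumped off with a cutoff function.
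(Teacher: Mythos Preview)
Your proposal is correct and follows essentially the same route as the paper. The paper organizes the equivalences slightly differently --- proving $\eqref{thm:fib:modular}\Leftrightarrow\eqref{thm:fib:fiber}$ directly from \Cref{lem:tgood} and \Cref{prop:Ygood}, then $\eqref{thm:fib:fiber}\Rightarrow\eqref{thm:fib:fib}$ by an explicit local trivialization, and $\eqref{thm:fib:fib}\Rightarrow\eqref{thm:fib:fiber}$ trivially --- but the substance is identical: the only nontrivial step is the local trivialization, and your ``trivial bundle minus disjoint local sections, straightened by an ambient isotopy'' argument is the same construction the paper carries out (they phrase it via tangent-space charts and maps $\theta_i$ that flatten each $H_i$ over a small box $U\times\overline{V_i}$, fixing the boundary). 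Your treatment of $\eqref{thm:fib:fiber}\Rightarrow\eqref{thm:fib:modular}$ is in fact slightly more careful than the paper's, since you make explicit why $\G$ with different numbers of punctures cannot be homeomorphic, a point the paper leaves implicit.
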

\begin{proof}
\
\begin{description}

\equ{thm:fib:modular}{thm:fib:fiber}
Follows from \Cref{lem:tgood} and \Cref{prop:Ygood}.

\imp{thm:fib:fiber}{thm:fib:fib} 
We need to show that the projection $\pr:M(\A)\to M(\qA)$ is locally trivial, so
let $u\in M(\qA)$.
Since $\Gamma'$ is a direct summand of $\Gamma$, we have a 
homeomorphism $T\cong(T/Y)\times Y$ giving a trivialization of the projection
$p:T\to T/Y$. 
Through the identification $p^{-1}(u)\cong \{u\}\times Y$, we 
write $\pr^{-1}(u) \cong (\{u\}\times Y)\setminus\{(u,v_1),\dots,
(u,v_\ell)\}$ for some $v_1,\dots,v_\ell\in Y$. 

By \Cref{lem:tgood}.\eqref{lem:tgood:Int}, for each $1\leq i\leq\ell$, there is a unique
$H_i\in A(\A)$ such that $(u,v_i)\in (\{u\}\times Y)\cap H_i$.
Then for each $i$, there are neighborhoods $U_i$ around $u$ in $T/Y$,
$V_i$ around $v_i$ in $Y$, and $W$ around 0 in the tangent space
$\tau_{(u,v_i)}T$ such that $(U_i\times V_i)\cap M(\A) \cong W\cap
(\tau_{(u,v_i)} T\setminus \tau_{(u,v_i)} H_i)$.
Let us pick a neighborhood $U$ around $u$ in $M(\qA)$ small enough so that
$U$ may play the role of $U_i$ above for each $i$.  
By construction, the sets $V_i$ are pairwise disjoint.

Now, for each $i$, we may define a map $\theta_i:U\times \overline{V_i}\to
\overline{V_i}$ such that $\theta_i(H_i\cap (U\times\overline{V_i})) = v_i$
and such that, for any $w\in U$, the restriction
$\theta_i|_{w\times\overline{V_i}}:w\times\overline{V_i}\to\overline{V_i}$
is a homeomorphism fixing the boundary of $\overline{V_i}$.
Extend this to a homeomorphism $\theta:U\times Y\to U\times Y$ by
$\theta(w,y)=(w,\theta_i(w,y))$ when $y\in V_i$ and $\theta(w,y)=(w,y)$ if
$y\notin\cup_i V_i$.

Finally, consider $\pr:M(\A)\to M(\qA)$.
The above map $\theta$ restricts to a homeomorphism $\pr^{-1}(U)\cong U\times
\pr^{-1}(u)$.

\imp{thm:fib:fib}{thm:fib:fiber} Immediate.
\end{description}
\end{proof}

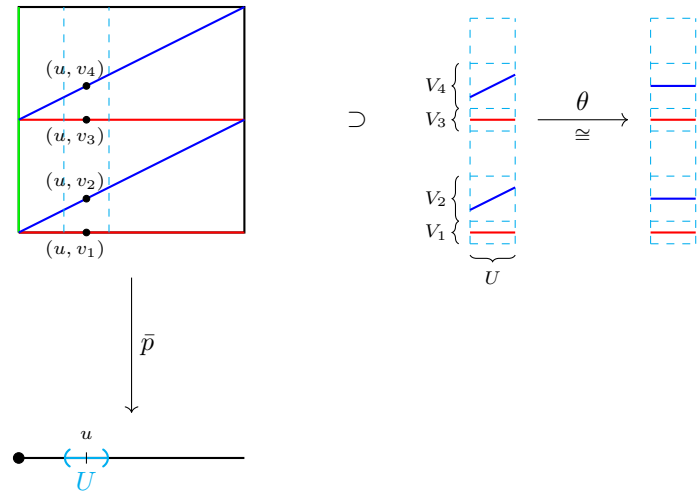
\begin{figure}[hb]
\begin{tikzpicture}[scale=3]
%% top
\draw[thick,-] (0,0)--(1,0)--(1,1)--(0,1)--(0,0);
\draw[thick,red,-] (0,0)--(1,0);
\draw[thick,red,-] (0,.5)--(1,.5);
\draw[thick,green,-] (0,0)--(0,1);
\draw[thick,blue,-] (0,0)--(1,.5);
\draw[thick,blue,-] (0,.5)--(1,1);
\draw[dashed,cyan,-] (.2,0)--(.2,1);
\draw[dashed,cyan,-] (.4,0)--(.4,1);
\node[draw,circle,fill=black,minimum size=2.5pt,inner sep=0pt] at (.3,0) {};
\node[draw,circle,fill=black,minimum size=2.5pt,inner sep=0pt] at (.3,.15) {};
\node[draw,circle,fill=black,minimum size=2.5pt,inner sep=0pt] at (.3,.5) {};
\node[draw,circle,fill=black,minimum size=2.5pt,inner sep=0pt] at (.3,.65) {};
\node at (.25,-.075) {\scriptsize $(u,v_1)$};
\node at (.25,.225) {\scriptsize $(u,v_2)$};
\node at (.25,.425) {\scriptsize $(u,v_3)$};
\node at (.25,.725) {\scriptsize $(u,v_4)$};
%% bottom
\draw[thick,-] (0,-1)--(1,-1);
\draw[thick,cyan,(-)] (.2,-1)--(.4,-1);
\node[cyan] at (.3,-1.1) {$U$};
\draw[-] (.3,-1.025)--(.3,-.975);
\node at (.3,-.9) {\scriptsize $u$};
%% fiber
\foreach \x in {2,2.2} \draw[dashed,cyan,-] (\x,-.05)--(\x,.95);
\foreach \x in {0,.5} \draw[thick,red,-] (2,\x)--(2.2,\x);
\foreach \x in {.1,.6} \draw[thick,blue,-] (2,\x)--(2.2,\x+.1);
\foreach \x in {-.05,.05,.25,.45,.55,.75,.95} 
\draw[dashed,cyan,-] (2,\x)--(2.2,\x);
\draw[decorate,decoration={brace}] (1.95,.05)--(1.95,.25);
\draw[decorate,decoration={brace}] (1.95,.45)--(1.95,.55);
\draw[decorate,decoration={brace}] (1.95,.55)--(1.95,.75);
\draw[decorate,decoration={brace}] (1.95,-.05)--(1.95,.05);
\node at (1.85,0) {\scriptsize $V_1$};
\node at (1.85,.15) {\scriptsize $V_2$};
\node at (1.85,.5) {\scriptsize $V_3$};
\node at (1.85,.65) {\scriptsize $V_4$};
\draw[decorate,decoration={brace}] (2.2,-.1)--(2,-.1);
\node at (2.1,-.2) {\scriptsize $U$};
%% homeo to fiber
\foreach \x in {2.8,3} \draw[dashed,cyan,-] (\x,-.05)--(\x,.95);
\foreach \x in {0,.5} \draw[thick,red,-] (2.8,\x)--(3,\x);
\foreach \x in {.15,.65} \draw[thick,blue,-] (2.8,\x)--(3,\x);
\foreach \x in {-.05,.05,.25,.45,.55,.75,.95} 
\draw[dashed,cyan,-] (2.8,\x)--(3,\x);
%% miscellaneous
\draw[->] (2.3,.5) -- node[above] {$\theta$} node[below] {\scriptsize $\cong$}
(2.7,.5);
\node at (1.5,.5) {$\supset$};
\draw[->] (.5,-.2) -- node[right] {$\pr$} (.5,-.8);
\solidnodes
%% bottom
\node at (0,-1) {};
%% fiber
\end{tikzpicture}
\caption{A picture guide to the proof of local triviality in 
\Cref{thm:fib}, \eqref{thm:fib:fiber}$\implies$\eqref{thm:fib:fib}.}
\label{fig:loctriv}
\end{figure}

\begin{corollary}\label{cor:cover}
For any $\alpha\in\A\setminus\A_Y$, the projection $p:T\to T/Y$ restricts to a covering map $p|_{H_{\alpha}}:H_\alpha\to 
T/Y.$
\end{corollary}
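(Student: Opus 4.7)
The plan is to show that $p|_{H_\alpha}\colon H_\alpha\to T/Y$ is already a finite covering map onto the entire quotient $T/Y$, and then to deduce the corollary by restricting along the open inclusion $M(\qA)\hookrightarrow T/Y$.

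For the first step, I would start from the fact that $p$ is a surjective homomorphism of abelian Lie groups and that $H_\alpha\subseteq T$ is a closed Lie subgroup, so the restriction $p|_{H_\alpha}$ is itself a Lie-group homomorphism into $T/Y$. The workhorse is \Cref{lem:calpha}: since $\alpha\in\A\setminus\A_Y$ means $c(\alpha)>0$, the lemma says that for every $t\in T$ the fiber
\[ p|_{H_\alpha}^{-1}(p(t)) \;=\; (t+Y)\cap H_\alpha \]
is a finite set of cardinality exactly $c(\alpha)^d$. Specializing $t=0$, the kernel $H_\alpha\cap Y$ is a discrete (finite) subgroup; combined with the dimension identity $\dim H_\alpha = (\dim\Gamma-1)\dim\G = \dim(T/Y)$, this makes $p|_{H_\alpha}$ a local diffeomorphism. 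Letting $t$ vary yields surjectivity with constant finite fiber cardinality, so we obtain a surjective Lie-group homomorphism with discrete kernel --- which is automatically a covering map, since translations in $T/Y$ trivialize the map locally over neighborhoods evenly covered by $c(\alpha)^d$ disjoint copies.

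The second step is then routine: a covering map restricts to a covering map over any open subset of the base, so pulling back along $M(\qA)\hookrightarrow T/Y$ restricts $p|_{H_\alpha}$ to a covering of $M(\qA)$ by $H_\alpha\cap p^{-1}(M(\qA))$, which is the content of the corollary. I do not foresee a genuine obstacle; the only mild care is that $H_\alpha$ need not be connected when $\G$ has a compact factor (e.g.\ when $c(\alpha)>1$ and $d>0$), but neither the local-diffeomorphism argument nor the covering-restriction argument requires connectedness of the domain.
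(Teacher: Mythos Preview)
Your argument is correct and takes a genuinely different route from the paper's. The paper derives the covering property by reusing the explicit local trivialization $\theta$ built in the proof of \Cref{thm:fib}: over a small enough $U\subseteq M(\qA)$, the maps $\theta_i^{-1}$ send $U\times\{v_i\}$ homeomorphically onto the sheets of $p^{-1}(U)\cap H_\alpha$. In particular, the paper's proof leans on (and hence implicitly assumes) the equivalent conditions of the Fibration Theorem, i.e., the M-ideal hypothesis on $\P(\A_Y)$.

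Your approach instead goes through the Lie-group structure: $H_\alpha$ is a closed subgroup, $p|_{H_\alpha}$ is a group homomorphism, \Cref{lem:calpha} gives surjectivity and a finite kernel, and the matching dimensions force a local diffeomorphism---so $p|_{H_\alpha}\colon H_\alpha\to T/Y$ is a covering outright, before any restriction to $M(\qA)$. This is cleaner, uses only \Cref{lem:calpha}, and in fact proves a stronger statement that does \emph{not} require the M-ideal hypothesis; the restriction to $M(\qA)$ is then automatic. The paper's proof, by contrast, is more tightly coupled to the surrounding fibration machinery, which is narratively coherent but logically less economical here. Your final remark about the restricted domain being $H_\alpha\cap p^{-1}(M(\qA))$ rather than all of $H_\alpha$ is the right reading of the statement; the paper's proof is doing the same thing, just less explicitly.
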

\begin{proof}
Pick $u\in T/Y$,
and choose a neighborhood $U$ of $u$ as in the proof of \Cref{thm:fib}.\eqref{thm:fib:fiber}$\implies$\eqref{thm:fib:fib}, and consider the map $\theta$ defined there (see also \Cref{fig:loctriv}). Suppose we numbered the preimages of $u$ so that $(u,v_1),\ldots,(u,v_m)$ denote the elements of $p^{-1}(u)\cap H_\alpha$. Then for each $i=1,\ldots,m$ the homeomorphism $\theta_i^{-1}$ maps $U\times \{v_i\}$ homeomorphically to the component of $p^{-1}(U)\cap H_\alpha$ containing $(u,v_i)$. This proves the claim. 
\end{proof}

\begin{corollary}\label{cor:Yislayer}
If $Y$ is an admissible subgroup of $T$ for which $\P(\A_Y)$ is an M-ideal of $\P(\A)$ and $\dim(Y)=\dim(\G)$, 
then $Y$ is a layer of $\A$.
\end{corollary}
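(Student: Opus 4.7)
The plan is to realize $Y$ as the identity component of $X_0 := \bigcap_{\alpha\in\A_Y} H_\alpha$. Since $c(\alpha)=0$ for every $\alpha\in\A_Y$, we have $Y\subseteq H_\alpha$ for all such $\alpha$, hence $Y\subseteq X_0$. Because $\Gamma'$ has rank one, the isomorphism $\Hom(\Gamma',\G)\cong\G$ of connected Lie groups shows that $Y$ is connected, and, being the image of the group homomorphism $\varepsilon^*$, it contains the identity $0\in T$. Let $W$ denote the connected component of $X_0$ containing $0$; then $Y\subseteq W$.

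Next, I argue that $W$ is a maximal element of $\P(\A_Y)$: it is by definition a connected component of the intersection taken over the entirety of $\A_Y$, and so no element of $\P(\A_Y)$ can strictly refine it. Invoking the purity of the M-ideal $\P(\A_Y)$ (\Cref{def:modideal}) together with the rank condition $\rk(\P(\A_Y)) = \rk(\P(\A))-1$ tied to the surrounding \Cref{prop:Ygood} and \Cref{thm:fib}, we obtain $\rk(W) = \rk(\P(\A))-1$.

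A dimension count then concludes the proof. Essentiality of $\A$ gives $\rk(\P(\A)) = \rk(\Gamma)$ and $\dim(T) = \rk(\Gamma)\dim(\G)$. Because each cover relation in the locally geometric poset $\P(\A)$ corresponds to cutting by a codimension-$\dim(\G)$ hyperplane component, the rank function on $\P(\A)$ satisfies $\dim(X) = \dim(T) - \rk(X)\dim(\G)$ for every layer $X$. Applied to $W$,
\[ \dim(W) = \rk(\Gamma)\dim(\G) - (\rk(\Gamma)-1)\dim(\G) = \dim(\G) = \dim(Y). \]
Since $Y\subseteq W$ are connected submanifolds of $T$ of equal dimension, $Y = W$, displaying $Y$ as a connected component of $\bigcap_{\alpha\in\A_Y}H_\alpha$, i.e., a layer of $\A$.

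The main (mild) hurdle is the bookkeeping step linking the combinatorial rank in $\P(\A)$ to the manifold codimension in $T$; once that correspondence and $\rk(W) = \rk(\P(\A))-1$ are secured, the conclusion follows from the elementary Lie-theoretic fact that a connected submanifold of a connected manifold of the same dimension must coincide with it.
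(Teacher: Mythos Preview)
Your argument is correct and follows essentially the same route as the paper's own proof: find a maximal layer $Y_0\in\max\P(\A_Y)$ containing $Y$ (your $W$ is the identity component of $\bigcap_{\alpha\in\A_Y}H_\alpha$, which is exactly the paper's $Y_0$), use the rank condition $\rk\P(\A_Y)=\rk\P(\A)-1$ from \Cref{thm:fib} to compute $\dim W=\dim\G$, and conclude $Y=W$. One small wording issue: the claim that ``a connected submanifold of a connected manifold of the same dimension must coincide with it'' is false in general (e.g.\ an open interval in $\R$); what you actually need, and what holds here, is that $Y$ is also \emph{closed} in $W$ (both are closed Lie subgroups of $T$), so $Y$ is open and closed in the connected $W$, hence $Y=W$.
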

\begin{proof}
Because each $H_\alpha$ contains the identity of $T$, there is some $Y_0\in\tr{Y}$ which contains $Y$.
As a consequence of \Cref{thm:fib}.\eqref{thm:fib:modular},
$\rk(Y_0)=\rk(\P(\A))-1$. 
Now \[\dim(Y_0)=\dim(\G)(\rk(\P(\A))-\rk(Y_0)) = \dim(\G) = \dim(Y),\]
so we must have $Y=Y_0\in\P(\A)$.
\end{proof}

\begin{remark}
In our fibration theorem, like Terao's \cite{teraofibthm}, we require $\P(\A_Y)$ to be a corank-one subposet of $\P(\A)$. 
We conjecture that an M-ideal of any rank in $\P(\A)$ will give rise to a fiber bundle, with fiber homeomorphic to the complement of an \emph{affine} abelian arrangement, as is the case for hyperplane arrangements \cite{Paris}.
\end{remark}

\subsection{Fiber-type arrangements}

\begin{definition}\label{def:ft}
We inductively define an arrangement $\A\subseteq\Gamma$ to be \defh{fiber-type} if
$\rk\Gamma=1$ or there exists a rank-one split direct summand $\Gamma'\subseteq\Gamma$ and
$\B\subseteq\Gamma/\Gamma'$ such that $\B$ is fiber-type and the projection
$p:\Hom(\Gamma,\G)\to\Hom(\Gamma/\Gamma',\G)$ restricts to a fibration $\pr:M(\A)\to M(\B)$
whose fibers are homeomorphic to $\G$ with finitely many points removed.
\end{definition}

\begin{remark}
Following \Cref{cor:Yislayer}, if $M(\A)\to M(\B)$ is a fiber bundle,
then $\B=\qA$ for some layer $Y\in\P(\A)$.
\end{remark}

\begin{theorem}\label{thm:ft=ss}
An essential arrangement $\A$ is fiber-type if and only if 
its poset of layers $\P(\A)$ is supersolvable.
\end{theorem}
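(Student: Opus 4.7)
The plan is to induct on $n := \rk(\Gamma)$. The base case $n = 1$ is immediate, since then $\rk(\P(\A)) \leq 1$ is trivially supersolvable and $\A$ is fiber-type by \Cref{def:ft}.

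For the forward direction, assume $\A$ is fiber-type. By \Cref{def:ft}, there is a rank-one split summand $\Gamma' \subset \Gamma$ and a fiber-type $\B \subseteq \Gamma/\Gamma'$ such that $\pr : M(\A) \to M(\B)$ is a fiber bundle with fibers homeomorphic to $\G$ minus finitely many points. Letting $Y$ be the admissible subgroup corresponding to $\Gamma'$, the remark following \Cref{def:ft} gives $\B = \qA$. Applying \Cref{thm:fib}, $\P(\A_Y) \cong \P(\B)$ is an M-ideal of $\P(\A)$ of rank $n-1$. By the inductive hypothesis applied to $\B$, there is a supersolvable chain $\Q_0 \subset \cdots \subset \Q_{n-1} = \P(\B)$. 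Transporting this chain via the isomorphism and appending $\Q_n := \P(\A)$ produces a supersolvable chain for $\P(\A)$: the M-ideal condition on each pair $\Q_i \subset \Q_{i+1}$ depends only on $\Q_{i+1}$ itself, so it carries over from $\P(\B)$ to $\P(\A)$.

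For the backward direction, let $\zero = \Q_0 \subset \cdots \subset \Q_n = \P(\A)$ witness supersolvability. The aim is to produce an admissible subgroup $Y \subseteq T$ with $\P(\A_Y) = \Q_{n-1}$. Choose any $\hat y \in \max(\Q_{n-1})$; since $\A$ is essential (so $\rk(\P(\A)) = n$) and $\rk(\hat y) = n-1$, the layer $\hat y$ is a coset of a connected subgroup $Y$ of $T$ of dimension $\dim(\G)$, corresponding to a rank-one direct summand of $\Gamma$, hence admissible. Once the identity $\P(\A_Y) = \Q_{n-1}$ is established, \Cref{thm:fib} yields the fibration $\pr : M(\A) \to M(\qA)$ required by \Cref{def:ft}. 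The truncated chain $\Q_0 \subset \cdots \subset \Q_{n-1} \cong \P(\qA)$ witnesses supersolvability of $\P(\qA)$, and $\qA$ is essential in $\Gamma/\Gamma'$ by rank comparison, so the inductive hypothesis shows $\qA$ is fiber-type, whence $\A$ is too.

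The main obstacle is verifying $\P(\A_Y) = \Q_{n-1}$. The plan is to show that every element of $\max(\Q_{n-1})$ is a coset of the \emph{same} subgroup $Y$: then $\A_Y$ is exactly the collection of $\alpha \in \A$ with $H_\alpha \supseteq Y$, and a layer of $\A$ lies in $\Q_{n-1}$ if and only if it decomposes into cosets of $Y$. The coherence of translation directions among elements of $\max(\Q_{n-1})$ should follow by combining the modular-element condition in \Cref{def:mod:2} with the atom-theoretic criterion of \Cref{cor:niceM}, applied inside each geometric lattice $\P(\A)_{\leq x}$ for $x \in \max(\P(\A))$, to rule out distinct rank-one translation subgroups appearing as different maximal elements of $\Q_{n-1}$.
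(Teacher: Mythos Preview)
Your forward direction is correct and matches the paper's argument.

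In the backward direction you have the right skeleton---pick a one-dimensional subgroup $Y$, show $\P(\A_Y)=\Q_{n-1}$, apply \Cref{thm:fib}, and induct---but the crucial step is not actually proved. Your paragraph beginning ``The main obstacle'' announces a plan (show every $\hat y'\in\max(\Q_{n-1})$ is a coset of the \emph{same} $Y$) and then asserts that this ``should follow'' from \Cref{def:mod:2} and \Cref{cor:niceM}, without carrying out the argument. As written this is a gap: you have not explained how \Cref{cor:niceM} rules out two maximal elements of $\Q_{n-1}$ being cosets of distinct rank-one subgroups, and you have not verified that every coset of $Y$ that happens to be a layer must lie in $\Q_{n-1}$ (which is also needed for the equality $\P(\A_Y)=\Q_{n-1}$).

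The paper closes this gap by a cleaner choice of $Y$. Since $\A$ is essential, the identity $e\in T$ is a maximal element of $\P(\A)$; then \Cref{def:mod:2} gives a (unique, by \Cref{rem:uniquey}) $Y\in\max(\Q_{n-1})$ with $Y\leq e$, i.e.\ $Y\ni e$. Thus $Y$ is itself a subgroup, not merely a coset, and automatically $Y\in\Q_{n-1}$. Now both $\P(\A_Y)$ and $\Q_{n-1}$ are join-closed order ideals, so it suffices to compare atom sets. One checks directly that
\[
A(\A_Y)=\{H\in A(\P)\st H\leq Y \text{ or } H\vee Y=\emptyset\}=A(\Q_{n-1}),
\]
the first equality by \Cref{rem:aay} and the second using that $\Q_{n-1}$ is a join-closed order ideal satisfying \Cref{def:mod:1} with $Y$ maximal. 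This avoids entirely the need to argue that different maximal elements of $\Q_{n-1}$ are parallel. Your approach could in principle be completed, but the paper's choice of the identity-containing $Y$ makes the argument a two-line atom comparison rather than a delicate coherence statement.
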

\begin{proof}
In both directions, we proceed by induction. It is clear that when $\rk\Gamma=1$,
every choice of $\A$ is both fiber-type and supersolvable. 

Now, suppose that $\A$ is fiber-type. Let $\Gamma'$ and $\B$ be as in \Cref{def:ft}, then let $T=\Hom(\Gamma,\G)$ and $Y=\ker(\pr)$. 
By induction, $\B$ is supersolvable via a chain $\zero = \Q_0\subset
\Q_1\subset\cdots\subset \Q_{n-1}=\P(\B)$ where each $\Q_i$ is an M-ideal of rank $i$ in 
$\Q_{i+1}$. Through the isomorphism $\P(\B)=\P(\qA)\cong\P(\A_Y)\subseteq\P(\A)$,
we may view each $\Q_i$ as a subposet in $\P(\A)$. Since $\P(\A_Y)$ is an M-ideal of rank $\rk(\P(\A))-1$ in $\P(\A)$ by \Cref{thm:fib}, the chain of $\Q_i$'s satisfies the conditions of \Cref{def:ss}.

Conversely, suppose that $\A$ is supersolvable via a chain $\zero = \Q_0\subset
\Q_1\subset\cdots\subset \Q_n=\P(\A)$ of M-ideals.
Since $\A$ is essential, the identity of $T$ is a maximal element of $\P(\A)$, \Cref{def:mod:2} implies that there is a unique $Y\in\max(\Q_{n-1})$ that contains the identity of $T$.
We will prove that
$\P(\A_Y)=\Q_{n-1}$; then \Cref{thm:fib} and induction will imply
the fiber-type property. Since both $\P(\A_Y)$ and $\Q_{n-1}$ are join-closed
order ideals, it suffices to prove that $A(\A_Y)=A(\Q_{n-1})$. 
This follows from the observation that both of these sets are equal to $\{H\in
A(\P)\st H\leq Y \text{ or } H\vee Y = \emptyset\}$ (cf.\ \Cref{rem:aay}).
\end{proof}

\begin{corollary}\label{cor:kpi1}
The complement of a supersolvable linear, toric, or elliptic arrangement is a $K(\pi,1)$ space.
\end{corollary}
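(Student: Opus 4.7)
The plan is to invoke \Cref{thm:ft=ss}, which reduces supersolvability to the existence of a tower of fiber bundles, and then argue inductively via the long exact sequence in homotopy groups. Concretely, \Cref{thm:ft=ss} says that a supersolvable essential arrangement $\A$ is fiber-type, so by \Cref{def:ft}, $M(\A)$ sits at the top of a tower
\[
M(\A) = M_n \to M_{n-1} \to \cdots \to M_1,
\]
where each map is a fiber bundle whose fiber is homeomorphic to $\G$ with finitely many points removed, and the base $M_1$ is itself a space of the same form (corresponding to the $\rk\Gamma=1$ case of \Cref{def:ft}).

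Second, I would verify that for $\G$ equal to $\C$, $\C^\times$, or an elliptic curve, the space $\G$ minus any finite (possibly empty) subset is a $K(\pi,1)$. In the linear and toric cases this is immediate, since such a complement deformation retracts onto a (possibly empty) wedge of circles, yielding a $K(F_k,1)$ for some free group $F_k$. In the elliptic case, a complement of a nonempty finite subset of an elliptic curve is a noncompact connected surface, hence aspherical by the classification of surfaces; and the elliptic curve itself is $K(\Z^2,1)$.

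Third, I would run the induction on the length $n$ of the tower. The base case $n=1$ is handled by the previous paragraph. For the inductive step, given a fiber bundle $F\to E\to B$ in which $F$ is a $K(\pi,1)$ (so $\pi_i(F)=0$ for $i\geq 2$), the long exact sequence of homotopy groups gives $\pi_i(E)\cong\pi_i(B)$ for $i\geq 2$; combined with the inductive hypothesis that $B$ is a $K(\pi,1)$, this shows that $E$ is also a $K(\pi,1)$. Applied repeatedly up the tower, this proves $M(\A)$ is $K(\pi,1)$.

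The main obstacle is modest: it lies essentially in correctly identifying the base case, most notably in the elliptic setting where $M_1$ may be the entire elliptic curve (when the arrangement at the bottom of the tower happens to be empty). Once one records that the torus is itself aspherical, the rest is a routine combination of \Cref{thm:ft=ss} with standard homotopy theory.
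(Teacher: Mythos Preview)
Your proposal is correct and follows essentially the same approach as the paper's proof: invoke \Cref{thm:ft=ss} to obtain a tower of bundles with punctured-$\G$ fibers, verify asphericity at the base of the tower, and propagate $K(\pi,1)$ up the tower via the long exact sequence in homotopy. Your treatment of the base case is in fact more detailed than the paper's (which simply asserts the punctured surfaces are aspherical), and your concern about an empty bottom arrangement is harmless though in fact unnecessary, since essentiality forces the rank-one arrangement to be nonempty.
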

\begin{proof}
If $\rk\Gamma=1$, then $M(\A)\cong\G\setminus\{\text{finitely many points}\}$ is
$K(\pi,1)$ when $\G=\mathbb C$, $\mathbb C^\times$ or $(S^1)^2$. Now suppose that $M(\A)\to M(\B)$ is a fiber bundle
with fiber $F\cong\G\setminus\{\text{finitely many points}\}$, a $K(\pi,1)$ as above, and $\B$ is
supersolvable. By induction, $M(\B)$ is $K(\pi,1)$. Using the homotopy long
exact sequence for the fibration, one has for $i>1$ an exact sequence
\[0=\pi_i(F)\to \pi_i(M(\A)) \to \pi_i(M(\B))=0\]
Thus, $\pi_i(M(\A))=0$ for $i>1$.
\end{proof}

\begin{example}\label{ex:fib:ft}
The arrangement $\A$ from \Cref{ex:fib:good} is fiber-type, since in
\Cref{ex:ss:ss} it was determined that its poset of layers is supersolvable.
\end{example}

\section{Geometric posets}
Up to this point, we have worked with locally geometric posets. Now we introduce a  ``global'' notion of geometricity (\Cref{def:geom}). This leads to several interesting results. First, we establish an equivalent definition of an M-ideal (\Cref{thm:niceM}) for geometric posets.
We then turn our attention to geometric semilattices. 
We show that a geometric semilattice is supersolvable if and only if its canonical extension to a geometric lattice is supersolvable (\Cref{thm:sscone}), and this has applications to affine hyperplane arrangements (\Cref{thm:affinefib}).
We then prove that supersolvability is preserved under the quotient by a group action (\Cref{thm:quotient}), which has applications to covers of abelian arrangements.

\subsection{Geometric posets and their supersolvability}

The following definition extends the notion of \defh{geometric} beyond semilattices. Indeed, a geometric semilattice in the sense of \cite{WW} is precisely a semilattice satisfying condition \ref{A}, 
as can be seen easily from comparing \ref{A} with \cite[(G4)]{WW}.

\begin{definition}\label{def:geom}
A locally geometric poset $\P$ is \emph{geometric} if for all $x,y\in\P$:
\begin{itemize}
\item[\mylabel{A}{$(\ddagger\ddagger)$}] if $\rk(x)<\rk(y)$ and $I\subseteq A(\P)$ is such that $\bigvee I \ni y$ and $\vert I \vert = \rk(y)$, then there is $a\in I$ such that $a\not \leq x$ and $a\vee x\neq\emptyset$.
\end{itemize}
\end{definition}

\begin{example} As an illustration of condition $(\ddagger\ddagger)$ in \Cref{A} consider the case of an arrangement of lines in the plane, e.g., as in \Cref{fig:lines}. 
If two lines meet at a point $y$ and a third line, that we call $x$, misses the point $y$, the first two lines cannot both be parallel to the third. In the intersection poset of the given line arrangement we will have a rank-two element $y$ that is the join of the (independent) set $I$ given by the first two lines. The line $x$ has rank one less than $y$ and must intersect one of the two lines in $I$ (which we'd call $a$ in the statement of $(\ddagger\ddagger)$). In \Cref{fig:lines} this situation is illustrated with $I=\{a,b\}$.
\end{example}

\begin{figure}[ht]
\begin{tikzpicture}
\draw[thick,->] (-.1,-.1) -- (-.1,.9);
\draw[thick,->] (-.1,-.1) -- (1.9,-.1);
\node[anchor=north] (e1) at (.9,-.1) {$e_1$};
\node[anchor=east] (e2) at (-.1,.4) {$e_2$};
\foreach \x in {0,1,2,3,4,5,6}
    \foreach \y in {0,1,2,3}
        \node (\x\y) at (\x,\y) {};
\draw[color=red,thick] (00.west) -- (60.east);
\draw[color=red,thick] (01.west) -- (61.east);
\draw[color=red,thick] (02.west) -- (62.east);
\draw[color=red,thick] (03.west) -- (63.east);
\draw[color=blue,thick] (00.center) -- (63.center);
\draw[color=blue,thick] (20.center) -- (62.center);
\draw[color=blue,thick] (40.center) -- (61.center);
\draw[color=blue,thick] (01.center) -- (43.center);
\draw[color=blue,thick] (02.center) -- (23.center);
\draw[color=green,thick] (00.south) -- (03.north);
\draw[color=green,thick] (20.south) -- (23.north);
\draw[color=green,thick] (40.south) -- (43.north);
\draw[color=green,thick] (60.south) -- (63.north);
\draw[fill] (22) circle (.2em); 
\draw[fill] (62) circle (.2em); 
\node[anchor=south east] (y) at (22) {$y$};
\node[anchor=south] (a) at (3.2,2) {$a$};
\node[anchor=south east] (b) at (3,2.4) {$b$};
\node[anchor=south] (x) at (3,.5) {$x$};
\node[anchor=west] (j) at (6.1,2) {$x\vee a$};
\end{tikzpicture}
\caption{An arrangement of lines (it should be thought of as repeating periodically in vertical and horizontal direction, generating an infinite line arrangement).}\label{fig:lines}
\end{figure}

\begin{example}
Other posets encountered already, for instance in Figures \ref{fig:locgeom} and \ref{fig:ex:ss:mod}, are geometric posets. We shall see in \Cref{cor:geoquot} that the poset of layers for any abelian Lie group arrangement is geometric.
\end{example}

For geometric posets, supersolvability can be characterized using partitions of atoms, in a way reminiscent of \cite[Remark 2.6]{falk-terao}.

\begin{theorem}\label{thm:niceM}
Let $\P$ be a geometric poset, and let $\Q$ be a pure, join-closed, proper order ideal of $\P$. 
Then $\Q$ is an M-ideal with $\rk(\Q)=\rk(\P)-1$ if and only if \begin{itemize}
    \item[\ref{dagger}] for any two distinct
$a_1,a_2 \in A(\P)\setminus A(\Q)$ and every $x\in a_1\vee a_2$ there is $a_3\in A(\Q)$ with $x > a_3$.
\end{itemize} 
\end{theorem}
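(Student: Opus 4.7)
The forward direction is immediate from Corollary \ref{cor:niceM}, which holds in any locally geometric poset without invoking the global geometric hypothesis. The substance of the theorem is the converse, and my plan is to verify it by reducing to Corollary \ref{cor:niceGL} inside each principal interval $\P_{\leq x}$, $x \in \max(\P)$. The key construction is $\Q_x := \Q \cap \P_{\leq x}$. Since joins in the geometric lattice $\P_{\leq x}$ coincide with minimal upper bounds in $\P$ lying below $x$, and since $\Q$ is join-closed in $\P$, the subset $\Q_x$ is a join-closed order ideal of $\P_{\leq x}$ and hence principal: $\Q_x = (\P_{\leq x})_{\leq y_x}$ for $y_x := \bigvee \Q_x$.

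The condition \ref{dagger} transfers cleanly from $\Q \subseteq \P$ to $\Q_x \subseteq \P_{\leq x}$: distinct atoms of $\P_{\leq x}$ outside $A(\Q_x)$ are distinct atoms of $\P$ outside $A(\Q)$, their join in $\P_{\leq x}$ is a minimal upper bound in $\P$, and any atom of $A(\Q)$ below that join automatically lies in $A(\Q_x)$. Moreover, properness of $\Q$ forces $y_x < x$ for every $x \in \max(\P)$: an atom $a \in A(\P) \setminus A(\Q)$ must exist (any element of $\P \setminus \Q$ is a join of atoms, not all of which can be in $A(\Q)$ by join-closedness), and for any $x' \in \max(\P)$ above $a$ we have $y_{x'} < x'$; pureness of $\Q$ then propagates this strict inequality to every maximal $x$. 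Applying Corollary \ref{cor:niceGL} in each $\P_{\leq x}$ concludes that $y_x$ is modular of rank $\rk(x) - 1$. The global condition \ref{A} forces $\P$ to be pure (if $x, y \in \max(\P)$ with $\rk(x) < \rk(y)$, then \ref{A} applied to an atomic basis of $y$ yields an atom with a common upper bound with $x$, contradicting maximality), so $\rk(x) = \rk(\P)$ throughout, whence $\rk(y_x) = \rk(\P)-1$. Pureness of $\Q$ gives $\rk(\Q) = \rk(\P) - 1$ and $y_x \in \max(\Q)$, establishing \Cref{def:mod:2}.

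For \Cref{def:mod:1}, given $a \in A(\P) \setminus A(\Q)$ and $y \in \Q$ with $a \vee y = \emptyset$, I would argue by contradiction: picking $x_0 \in \max(\P)$ containing $y$ yields $y \leq y_{x_0}$ so $a \vee y_{x_0} = \emptyset$ in $\P$; then repeatedly applying \ref{A} to $a$ against a fixed atomic basis of $y_{x_0}$ inductively builds an element above $a$ that dominates the basis, forcing a common upper bound with $y$ and contradicting $a \vee y = \emptyset$. The main obstacle will be making this propagation argument precise: it requires orchestrating \ref{A} so that successive applications force a common upper bound with $y$ despite the fact that the set $\bigvee I$ in $\P$ may contain minimal upper bounds other than $y_{x_0}$. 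This is precisely where the global geometric hypothesis — not merely local geometricity — is indispensable, since the cross-interval transport of atom information underpinning the induction fails in a merely locally geometric setting.
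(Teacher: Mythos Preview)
Your treatment of \Cref{def:mod:2} is essentially the paper's argument: for each $x\in\max(\P)$ the ideal $\Q_x=\Q\cap\P_{\le x}$ is principal, \ref{dagger} restricts, and \Cref{cor:niceGL} yields a corank-one modular element. Your observation that \ref{A} forces $\P$ to be pure is a useful addition. The claim that ``pureness of $\Q$ propagates'' the inequality $y_{x'}<x'$ to every maximal $x$ is not justified as written (you have not shown $y_{x'}\in\max(\Q)$), but this becomes moot once \Cref{def:mod:1} is established: if some $x\in\max(\P)$ lay in $\Q$ then $a\vee x=\emptyset$ for any atom $a\notin\Q$, contradicting \Cref{def:mod:1}.

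The real gap is \Cref{def:mod:1}, and your diagnosis of the obstacle is accurate. Your iterated use of \ref{A}, climbing from $a$ one rank at a time against a fixed basis $I$ of $y_{x_0}$, stalls: after $\rk(y_{x_0})-1$ steps you reach an element of rank $\rk(y_{x_0})$ lying above $a$ and above all but one member of $I$; at that point the hypothesis of \ref{A} no longer holds, and nothing forces the last basis atom to be captured, so you cannot conclude that $a$ and $y_{x_0}$ have a common upper bound.

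The paper sidesteps this with a \emph{single} application of \ref{A}, reversing the roles of the two elements. Given $a\in A(\P)\setminus A(\Q)$ and $y\in\max(\Q)$ (it suffices to treat maximal $y$), choose $x\in\max(\P)$ with $a\le x$. In the geometric lattice $\P_{\le x}$, use \ref{dagger} together with basis exchange to produce $I\subseteq A(\Q)$ with $\lvert I\rvert=\rk(x)-1$ and $x\in\bigvee(I\cup\{a\})$. Now invoke \ref{A} for the pair $(y,x)$ with the set $I\cup\{a\}$: there is $b\in I\cup\{a\}$ with $b\not\le y$ and $b\vee y\neq\emptyset$. If $b\in I\subseteq A(\Q)$, join-closedness of $\Q$ gives $b\vee y\subseteq\Q$, contradicting maximality of $y$ in $\Q$. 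Hence $b=a$, and $a\vee y\neq\emptyset$ as required.
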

\begin{proof}
By \Cref{cor:niceM}, we need only show  \ref{dagger} implies that $\Q$ is an M-ideal.
For \Cref{def:mod:1}, let $y\in \max(\Q)$ and $a\in A(\P)\setminus A(\Q)$. Take $x\in \max(\P)$ such that $a<x$. 
Take $I\subseteq A(\Q)$ such that $a\vee \bigvee I\ni x$, and $|I|=\rk(x)-1$.
By condition \ref{A}, there exists $b\in I\cup\{a\}$ such that $b\not\leq y$ and $b\vee y\neq\emptyset$.
Since $\Q$ is join-closed and $y$ is maximal in $\Q$, we cannot have $b\in \Q$ and hence $b=a$. In particular, $a\vee y\neq\emptyset$.

For \Cref{def:mod:2}, take $x\in \max\P$ and note that $\Q':=\P_{\leq x}\cap\Q$ is a join-closed order ideal in $\P_{\leq x}$, hence it is of the form $\P_{\leq y}$ for some $y< x$. 
Now \ref{dagger} holds for $\Q'$ in the geometric lattice $\P_{\leq x}$, hence \Cref{cor:niceGL}
shows that $y$ is modular of rank $\rk(\P)-1$ in $\P_{\leq x}$. This also shows that $\rk(\Q)=\rk(\P)-1$.
\end{proof}

As an immediate consequence we obtain the desired characterization of supersolvability in geometric posets.
\begin{corollary}\label{cor:nice:GP}
Let $\P$ be a geometric poset. Then $\P$ is supersolvable if and only if there is a chain $\{\zero\}=\Q_0\subset \Q_1 \subset \ldots \subset \Q_n=\P$ of pure, join-closed order ideals of $\P$ with $\rk(\Q_i)=i$ and so that \ref{dagger} holds for $\Q_{i-1}$ in $\Q_i$, for all $i=1,\ldots,n$.
\end{corollary}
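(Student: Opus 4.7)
The plan is to apply Theorem \ref{thm:niceM} iteratively to each link $\Q_{i-1}\subset \Q_i$ of the chain, in both directions of the biconditional. The definition of supersolvability (\Cref{def:ss}) already unfolds as the existence of a chain $\zero=\Q_0\subset\cdots\subset\Q_n=\P$ in which each $\Q_{i-1}$ is an M-ideal of $\Q_i$ of corank one. Since an M-ideal is by definition a pure, join-closed order ideal, the forward implication reduces to checking that Theorem \ref{thm:niceM} can be applied to each inclusion $\Q_{i-1}\subset\Q_i$ to translate the M-ideal property into \ref{dagger}; the reverse implication runs the same argument in the opposite direction.

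The key technical point I would isolate is that each intermediate ideal $\Q_i$ is itself a geometric poset in the sense of \Cref{def:geom}. That $\Q_i$ is locally geometric is clear because $(\Q_i)_{\leq x}=\P_{\leq x}$ for any $x\in\Q_i$ (order ideals restrict intervals to the same geometric lattices). For condition \ref{A}, given $x,y\in\Q_i$ with $\rk(x)<\rk(y)$ and $I\subseteq A(\Q_i)\subseteq A(\P)$ with $\bigvee I\ni y$ and $\vert I\vert=\rk(y)$, I would apply \ref{A} in $\P$ to obtain $a\in I$ with $a\not\leq x$ and $a\vee x\neq\emptyset$ in $\P$; because $\Q_i$ is join-closed, this join is also realized inside $\Q_i$, so \ref{A} holds in $\Q_i$.

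With this observation, the two directions become routine. For $(\Rightarrow)$, take the chain of M-ideals given by supersolvability; each $\Q_i$ is geometric by the above, and $\Q_{i-1}$ is an M-ideal of $\Q_i$ of corank one by hypothesis, so Theorem \ref{thm:niceM} yields \ref{dagger} for $\Q_{i-1}$ in $\Q_i$. For $(\Leftarrow)$, I would proceed by downward induction along the chain: $\Q_n=\P$ is geometric by hypothesis, and if $\Q_i$ is geometric, then \ref{dagger} together with Theorem \ref{thm:niceM} (applied inside the geometric poset $\Q_i$) promotes $\Q_{i-1}$ to an M-ideal of $\Q_i$; the inductive hypothesis that $\Q_{i-1}$ is geometric then follows from the argument above.

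I do not anticipate a substantial obstacle here: the only non-bookkeeping step is verifying that \ref{A} descends to join-closed order ideals, which is a one-line use of join-closedness. Everything else is a direct invocation of Theorem \ref{thm:niceM} along the chain.
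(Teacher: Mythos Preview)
Your proposal is correct and follows essentially the same approach the paper intends: iterate Theorem~\ref{thm:niceM} along the chain. The paper records this as an immediate consequence without spelling out the verification that each intermediate $\Q_i$ is itself a geometric poset; you correctly identify this as the one substantive step and supply the argument (join-closedness forces $a\vee_\P x\subseteq\Q_i$, so condition~\ref{A} descends). One minor remark: for the forward direction you could cite \Cref{cor:niceM} directly rather than Theorem~\ref{thm:niceM}, since that corollary does not require the ambient poset to be geometric and hence sidesteps the need to establish geometricity of $\Q_i$ on that side of the argument---but your route works as well.
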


\subsection{Supersolvable geometric semilattices}
A main source of intuition on geometric semilattices comes from finite-dimensional vector spaces. The poset of intersections of any arrangement of hyperplanes in a vector space is a geometric semilattice, as is the poset of all affine subspaces of a finite-dimensional vector space. This suggests that geometric semilattices should be an ``affine'' counterpart to geometric lattices (among whose main examples we find posets of intersections of arrangements of {\em linear} hyperplanes). The following structure theorem is an abstract counterpart to this linear-affine relationship.

\begin{theorem}[Wachs-Walker \cite{WW}]\label{thm:WW}
A poset $\P$ is a geometric semilattice if and only if $\P=\L\setminus \L_{\geq a}$ where $\L$ is a geometric lattice and $a$ is an atom of $\L$. The poset $\L$ and the element $a$ are uniquely determined by $\P$, up to isomorphism.
\end{theorem}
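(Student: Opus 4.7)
The plan is to prove the two directions separately, using the matroid structure underlying geometric (semi)lattices throughout. For the backward direction, I start from a geometric lattice $\L$ and an atom $a \in A(\L)$ and verify that $\P := \L \setminus \L_{\geq a}$ is a geometric semilattice. First I would observe that $\P$ is closed under meets in $\L$: if $x,y \in \P$, then $a \leq x \wedge y$ would force $a \leq x$, contradicting $x \in \P$. Thus $\P$ is a meet-semilattice with minimum $\zero$. Since $\P_{\leq x} = \L_{\leq x}$ is geometric for every $x \in \P$, the poset $\P$ is locally geometric. The bulk of the work is verifying condition \ref{A}: given $x,y \in \P$ with $\rk(x) < \rk(y)$ and an independent $I \subseteq A(\P)$ with $\bigvee I \ni y$ and $|I|=\rk(y)$, the matroid exchange in $\L$ produces some $b \in I$ with $b \not\leq x$. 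The crucial step is to show that at least one such $b$ can be chosen so that $b \vee_\L x$ does not contain $a$; otherwise all candidates would force the closure of $x \cup \{a\}$ to contain $I$, whose rank strictly exceeds $\rk(x)$, contradicting semimodularity in $\L$.

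For the forward direction, I would build $\L$ from $\P$ as the lattice of flats of a free single-point extension of the matroid $M_\P$ on $A(\P)$ associated with $\P$. Concretely, the atoms of $\P$ carry a semimatroid structure whose independent sets are those subsets $S \subseteq A(\P)$ admitting a join $\bigvee S$ in $\P$ with $|S|=\rk(\bigvee S)$; condition \ref{A} encodes the matroid exchange axiom for this structure on subsets whose joins exist. Define $\hat M$ to be the matroid on $A(\P) \sqcup \{a\}$ obtained by declaring $S \cup \{a\}$ to be independent whenever $S$ is independent in $M_\P$, and, for each $S \subseteq A(\P)$ whose join does not exist in $\P$, by forcing the closure of $S$ in $\hat M$ to contain $a$. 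Set $\L := L(\hat M)$, the lattice of flats. The main technical step is to verify that $\hat M$ actually satisfies the matroid exchange axioms; here condition \ref{A} does the work of ensuring that partial joins in $\P$ interact coherently with the new atom. Once $\hat M$ is shown to be a matroid, $\L$ is a geometric lattice by standard matroid theory, and the flats of $\L$ not containing $a$ are in order-preserving bijection with $\P$, yielding $\P \cong \L \setminus \L_{\geq a}$.

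Uniqueness follows from the recipe above: the matroid $\hat M$ is uniquely determined by $\P$—a subset $S \subseteq A(\P)$ has $\bigvee S \in \P$ precisely when $a$ is not in the closure of $S$ in $\hat M$—so the pair $(\L, a)$ is recovered from $\P$ up to isomorphism. The main obstacle in the whole argument is the forward direction: formalizing $\hat M$ and checking that it defines a matroid. This hinges on translating condition \ref{A} of \Cref{def:geom} into the matroid exchange axiom, with careful bookkeeping to handle the difference between the partial joins available in $\P$ and the full join operation forced into existence in $\L$.
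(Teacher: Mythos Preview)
The paper does not prove this theorem; it is quoted as a result of Wachs and Walker \cite{WW} and used thereafter as a black box. There is thus no argument in the paper to compare your proposal against.

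That said, your outline is broadly in the spirit of the original Wachs--Walker argument, though their paper works with closure operators and the axioms (G1)--(G4) rather than with semimatroids and free extensions. One point to tighten: you describe $\hat M$ as the ``free single-point extension'' of $M_\P$, but a free extension by a coloop would make $a$ independent of everything in $A(\P)$, which is not what you want---you need $a$ to lie in the closure of precisely those $S \subseteq A(\P)$ whose partial join in $\P$ fails to exist. Your parenthetical clause says this, but the phrase ``free extension'' is misleading; the construction is better described as adjoining a new point $a$ in general position \emph{within} the span of each such $S$. Verifying that the resulting closure operator satisfies the exchange axiom is indeed where \ref{A} is used, and is the substantive part of the argument. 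For uniqueness, note also that you should argue $a$ is determined up to the automorphism type of the pair $(\L,a)$, not that $a$ itself is canonical.
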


We will use this theorem to relate M-ideals in geometric semilattices to the classical notion of modular elements in geometric lattices.

For the remainder of this section let $\P$ be a chain-finite geometric semilattice and let $\L$ be a geometric lattice with an atom $a_0\in A(\L)$ such that $\P=\L\setminus \L_{\geq a_0}$.  The meet operation coincides in $\L$ and $\P$. When needed, we will distinguish the join operations as $\vee_\P$ and $\vee_\L$.

\begin{lemma}\label{lem1trivial}
A subposet $\Q$ of a geometric semilattice $\P=\L\setminus\L_{\geq a_0}$ is a pure, join-closed order ideal satisfying \Cref{def:mod:1} in $\P$ if and only if $\Q=\P_{\leq x}:=\{p\in\P\st p\leq_{\L}x\}$ for some $x\in\L_{\geq a_0}$.
\end{lemma}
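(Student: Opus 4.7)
The plan is to handle the two directions separately; the backward direction is a direct verification using \Cref{thm:WW}, while the forward direction requires a matroid-theoretic argument.

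For the backward direction, suppose $\Q=\P_{\leq x}$ with $x\in\L_{\geq a_0}$. The order-ideal property is immediate, and join-closedness reduces to the fact that any nonempty join $\bigvee_\P T$ for $T\subseteq\P_{\leq x}$ equals $\bigvee_\L T\leq_\L x$. By \Cref{thm:WW} applied to the chain-finite geometric lattice $\L_{\leq x}$ with atom $a_0$, $\P_{\leq x}=\L_{\leq x}\setminus(\L_{\leq x})_{\geq a_0}$ is a geometric semilattice, hence pure. For \Cref{def:mod:1}, if $y\in\P_{\leq x}$ and $a\in A(\P)$ satisfy $a\vee_\P y=\emptyset$, i.e., $a\vee_\L y\geq a_0$ in $\L$, then since $a\not\leq y$ and $a_0\not\leq y$ (as $y\in\P$), both $a\vee_\L y$ and $a_0\vee_\L y$ cover $y$ in $\L$. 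Since $a_0\vee_\L y\leq a\vee_\L y$, they coincide, giving $a\leq a_0\vee_\L y\leq x$, so $a\in\P_{\leq x}$.

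For the forward direction, let $\Q$ be as in the hypothesis and put $A_\Q:=\Q\cap A(\P)$. I plan to set
\[ x:=\bigvee\nolimits_{\L}(A_\Q\cup\{a_0\})\in\L, \]
which exists since every chain-finite lattice has a top and hence $A_\Q\cup\{a_0\}$ is bounded; clearly $x\geq a_0$. The inclusion $\Q\subseteq\P_{\leq x}$ is immediate: every $y\in\Q$ is an $\L$-join of atoms of the geometric lattice $\P_{\leq y}$, all of which lie in $A_\Q$. The reverse inclusion reduces to the key claim that every atom of $\L$ below $x$ lies in $A_\Q\cup\{a_0\}$; granted this, any $p\in\P_{\leq x}$ is an $\L$-join of atoms below $p$, these atoms lie in $A_\Q$ (since $p\not\geq a_0$), and join-closedness places $p\in\Q$.

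The main obstacle is the key claim, which I will prove by contradiction using circuit exchange in the matroid of $\L$. Suppose $a\in A(\L)$ satisfies $a\leq x$ with $a\notin A_\Q\cup\{a_0\}$, so $a\in A(\P)\setminus A_\Q$. Choose a minimal finite $S\subseteq A_\Q\cup\{a_0\}$ with $a\leq\bigvee_\L S$, making $S\cup\{a\}$ a circuit. If $a_0\in S$, set $y:=\bigvee_\L(S\setminus\{a_0\})$; minimality of $S$ forces $y\in\P$ (otherwise $\bigvee_\L S=y$ and $a_0$ is redundant), whence $y\in\Q$ by join-closedness, while circuit exchange gives $a\vee_\L y\geq a_0$. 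Then \Cref{def:mod:1} places $a\in\Q$, a contradiction. If $a_0\notin S$, the subcase $\bigvee_\L S\in\P$ directly forces $a\in\Q$ via join-closedness and the order-ideal property; otherwise $\bigvee_\L S\geq a_0$, and picking a minimal $T\subseteq S$ with $a_0\leq\bigvee_\L T$ yields a second circuit $T\cup\{a_0\}$. Applying strong circuit elimination to eliminate some $t_0\in T$ from $S\cup\{a\}$ and $T\cup\{a_0\}$ produces a new circuit through $a$ of the form $C'\cup\{a\}$ with $C'\subseteq(A_\Q\cup\{a_0\})\setminus\{t_0\}$, $a\leq\bigvee_\L C'$, and $|C'|\leq|S|$. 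Minimality of $S$ then forces $|C'|=|S|$ and $a_0\in C'$, reducing to the first case.
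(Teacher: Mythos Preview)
Your backward direction matches the paper's argument closely (and your explicit justification of purity via \Cref{thm:WW} is a nice touch the paper omits). Your forward direction is correct but takes a genuinely different route. The paper sets $u:=\bigvee_\L\Q$ and $x:=u\vee_\L a_0$ (which agrees with your $x$, since $\bigvee_\L\Q=\bigvee_\L A_\Q$), then splits into the cases $u<x$ and $u=x$ and argues each directly by manipulating joins and covers in $\L$, never invoking matroid circuits. Your approach instead proves the atom-closure statement ``every atom below $x$ lies in $A_\Q\cup\{a_0\}$'' via strong circuit elimination, which is conceptually clean for a matroid-minded reader and isolates the combinatorial core of the argument; the paper's approach is more elementary and stays within the poset language used elsewhere in the paper.

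One small wrinkle: your line ``minimality of $S$ then forces $|C'|=|S|$'' only follows if $S$ is chosen of \emph{minimum cardinality}, not merely inclusion-minimal. This does not actually damage the argument, since what you need is $a_0\in C'$, and that already follows from inclusion-minimality of $S$: if $a_0\notin C'$ then $C'\subseteq S\setminus\{t_0\}\subsetneq S$ with $a\leq\bigvee_\L C'$, contradicting minimality directly. You may want to tighten the wording there.
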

\begin{proof}
First, assume $x\in\L_{\geq a_0}$ and let $\Q=\P_{\leq x}$. Clearly $\Q$ is a pure, join-closed order ideal. To verify \Cref{def:mod:1}, let $a\in A(\P)$ and $y\in\Q$ such that $a\vee_{\P}y=\emptyset$. This means that $a\vee_{\L}y\geq a_0$, and hence $a\vee_{\L}y = a_0\vee_{\L}a\vee_{\L}y \geq a_0\vee_{\L}y>y$. Since $a\vee_{\L}y$ covers $y$, we must have $a\vee_{\L}y=a_0\vee_{\L}y$. Thus, $a\leq a\vee_{\L}y=a_0\vee_{\L}y\leq x$ and hence $a\in \Q$.

Conversely, assume that $\Q$ is a pure, join-closed order ideal of $\P$ satisfying \Cref{def:mod:1}, and let $u=:\bigvee_{\L}\Q$ and $x:=u\vee_{\L}a_0$. It is clear that $\Q\subseteq\P_{\leq x}$, but to obtain equality we consider two cases: either $u<x$ or $u=x$.

If $u<x$, then $u\in\P$ and hence $\Q=\P_{\leq u}$. We thus need to show that $\P_{\leq x}\subseteq\P_{\leq u}$. Let $y\in\P_{\leq x}$, so $y\in\P$ and $y\leq x=u\vee_{\L}a_0$. To argue by contradiction, suppose that $y\not\leq u$. 
Then there exists $a\in A(\P)$ such that $a\leq y$ and $a\not\leq u$. This implies $u\lessdot u\vee_{\L}a\leq u\vee_{\L}a_0$, where the latter inequality holds because $a\leq y\leq u\vee_{\L}a_0$. 
Since $u\lessdot u\vee_{\L}a_0$, this implies that $u\vee_{\L}a = u\vee_{\L}a_0\geq a_0$. However, $u\vee_{\L}a\geq a_0$ implies $u\vee_{\P}a=\emptyset$, contradicting the assumption that $\Q=\P_{\leq u}$ satisfies \Cref{def:mod:1}.

For the second case, suppose $u=x\geq a_0$. 
Then, e.g., by \cite[Theorem 2.29 and 6.4.(iii)]{aigner}, there exists a set $I\subseteq A(\Q)$ such that $x=a_0\vee_{\L}\bigvee_{\L}I$ and $\rk(x)=|I|+1$. But then $\bigvee_{\L}I\not\geq a_0$, and hence $\bigvee_{\P}I\neq\emptyset$. Since $\Q$ is join-closed, this means $y:=\bigvee_{\P}I\in\Q$. Moreover, since $y\vee_{\L}a_0=x$, we have $y\lessdot x$.
Now suppose that $a\in A(\P_{\leq x})\setminus A(\Q)$. Since $\Q$ satisfies \Cref{def:mod:1}, we must have $a\vee_{\L}y\not\geq a_0$, so the second inequality in the following chain is strict $x\geq a_0\vee_{\L}a\vee_{\L}y>a\vee_{\L}y>y$, contradicting the fact that $y\lessdot x$. This means that no such $a$ can exist, i.e. $A(\P_{\leq x})\subseteq A(\Q)$. Since $\Q$ is join-closed and every element of $\P_{\leq x}$ is a join of atoms, we must then have $\P_{\leq x}\subseteq\Q$.
\end{proof}

\begin{lemma}\label{lemcompL} 
Let $\P=\L\setminus\L_{\geq a_0}$ be a geometric semilattice, $x\in\L_{\geq a_0}$, and $\Q=\P_{\leq x}$.
If $z$ is a complement to $x$ in $\L$, then $z\in\P$ and $z$ is a complement to $\max\Q$ in $\P$.
Moreover, every complement to $\max\Q$ in $\P$ is a complement to $x$ in $\L$.
\end{lemma}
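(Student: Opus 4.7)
The plan is to verify both assertions by direct computation of meets and joins in $\L$ and $\P$, using crucially the identity established in the proof of \Cref{lem1trivial}: every $y\in\max\Q$ satisfies $y\vee_\L a_0=x$ in $\L$.

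For the forward direction, suppose $z$ is a complement of $x$ in $\L$. Because $a_0\leq x$ and $z\wedge x=\zero$, I would obtain $z\wedge a_0\leq\zero$, hence $a_0\not\leq z$, so $z\in\P$. For any $y\in\max\Q$ the inequality $y\leq x$ yields $z\wedge y\leq z\wedge x=\zero$, giving the meet condition in $\P$. For the join $z\vee_\P y$ I would split into two cases: if $z\vee_\L y\geq a_0$ then every upper bound of $\{z,y\}$ in $\L$ lies in $\L_{\geq a_0}$, so $z\vee_\P y=\emptyset$ is contained in $\max\P$ vacuously; otherwise $z\vee_\L y\in\P$ and the calculation
\[(z\vee_\L y)\vee_\L a_0 \;=\; z\vee_\L(y\vee_\L a_0) \;=\; z\vee_\L x \;=\; \one\]
shows that $z\vee_\L y$ is covered by $\one$ in $\L$, hence is a coatom and therefore lies in $\max\P$.

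For the converse, suppose $z$ is a complement of $\max\Q$ in $\P$. The meet condition is easy: setting $v=z\wedge x$, one has $v\leq z\in\P$ so $v\in\P$, and $v\leq x$ forces $v\in\Q$; picking any $y\in\max\Q$ with $v\leq y$ then gives $v=v\wedge y\leq z\wedge y=\zero$, hence $v=\zero$. For the join, pick any $y\in\max\Q$; in the subcase $z\vee_\L y\not\geq a_0$ the same identity as above shows that $z\vee_\L y\in\max\P$ is a coatom of $\L$, whence
\[z\vee_\L x \;=\; z\vee_\L(y\vee_\L a_0) \;=\; (z\vee_\L y)\vee_\L a_0 \;=\; \one,\]
as desired.

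The hard part will be the remaining subcase, in which $z\vee_\L y\geq a_0$ for every $y\in\max\Q$: here $z\vee_\P y=\emptyset$ and the complement condition holds only vacuously, while one can compute only $z\vee_\L x=z\vee_\L y\in[x,\one]$. To force $z\vee_\L x=\one$ one has to exploit extra structure on $x$, which in the intended application is provided by $\Q$ being an M-ideal of corank one in $\P$ (equivalently, $x$ being a coatom of $\L$): then $[x,\one]=\{x,\one\}$, and the alternative $z\vee_\L y=x$ would force $z\leq x$ and hence $z\in\Q$, so picking $y'\in\max\Q$ above $z$ would yield $z=z\wedge y'=\zero$, making $z\vee_\L y=y$, which is impossible since $y\in\P$ means $y\not\geq a_0$.
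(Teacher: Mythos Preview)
Your treatment of the forward direction and of the meet condition in the converse follows the paper's argument, only with a more explicit case split on whether $z\vee_\L y\geq a_0$; this is a harmless refinement.

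Your caution about the remaining subcase of the converse is well placed, and in fact you have put your finger on a genuine defect in the lemma as stated. The ``Moreover'' clause is \emph{false} without further hypotheses. Take $\L$ to be the lattice of flats of $U_{3,4}$ on $\{a,b,c,d\}$ direct-summed with a coloop $e$ (so $\rk\L=4$), set $a_0=c$ and $x=\{c,d\}$, so $\max\Q=\{d\}$. Then $z=\{a,b\}\in\P$ satisfies $z\wedge d=\zero$ and $z\vee_\L d=\{a,b,c,d\}\geq a_0$, hence $z\vee_\P d=\emptyset\subseteq\max\P$; thus $z$ is a complement to $\max\Q$ in $\P$. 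But $z\vee_\L x=\{a,b,c,d\}\neq\one$, so $z$ is not a complement of $x$ in $\L$. The paper's proof asserts ``$q\vee_\L z\geq p$ for some $p\in\max\P$'' without justification, and this is exactly what fails here.

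Your proposed patch (assume $x$ is a coatom of $\L$) does repair the argument. However, your remark about ``the intended application'' slightly misreads the paper: in the proof of \Cref{thm:sscone} the rank of $x$ is not restricted, but only the \emph{forward} direction of \Cref{lemcompL} is ever invoked there. The faulty ``Moreover'' clause is not used anywhere, so the paper's main results are unaffected.
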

\begin{proof}
 Let $z$ be a complement to $x$ in $\L$. From $x\wedge_\L z=\zero$ we have $z\in \P$. Now consider any $m\in \max \Q$. In particular,  $m\vee_\L a_0=x$.  Then 
 $m\vee_\L z \lessdot
 (a_0 \vee_\L m\vee_\L z) =
 x\vee_\L z
 =\one
 $, where the first inequality holds because $\L$ is a geometric lattice (see \Cref{def:geomlattice}). In particular $m\vee_\P z\subseteq \max \P$. On the other hand, we have $x \wedge_\L z \geq m \wedge_\L z= m\wedge_\P z$,  thus $x\wedge_\L z=\zero$ implies $m\wedge_\P z=\zero$. 
 
Let $z\in \P$ be a complement to $\max\Q$ in $\P$. For every $q\in \max\Q$ we have $q\vee_\L a_0=x$, and $q\vee_\L z\geq p$ for some $p\in \max\P$. Thus, $x\vee_\L z = (a_0\vee_\L q \vee_\L z) \geq a_0\vee_\L p = \one$. On the other hand, surely $x\wedge z \subseteq \P_{\leq x}=\Q$ and thus there is $q\in \max \Q$ such that  $x\wedge z\leq q $. But then,  $x\wedge z = x\wedge z \wedge z \leq q\wedge z =\zero$. This proves that $z$ is a complement to $x$ in $\L$.
\end{proof}

\begin{theorem}\label{thm:sscone}
Let $\P=\L\setminus\L_{\geq a_0}$ be a geometric semilattice. A subposet $\Q$ of $\P$ is an M-ideal if and only if $\Q=\P_{\leq x}$ for some modular element $x$ in $\L$ with $x\geq a_0$.
Consequently, $\P$ is supersolvable if and only if $\L$ is supersolvable via a chain of modular elements passing through $a_0$.
\end{theorem}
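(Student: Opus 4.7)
The plan is to leverage \Cref{lem1trivial}, which already tells us that every pure, join-closed order ideal of $\P$ satisfying \Cref{def:mod:1} (in particular every M-ideal) has the form $\P_{\leq x}$ for some $x\in\L_{\geq a_0}$. Hence the core task is to show that, for such an $x$, condition \Cref{def:mod:2} holds if and only if $x$ is modular in $\L$. A useful preliminary observation is that the maximal elements of $\P$ are precisely the coatoms of $\L$ not above $a_0$: indeed, atomicity of $\L$ together with the fact that $m\vee_\L a$ covers $m$ in $\L$ for every atom $a\not\leq m$ forces $m\vee_\L a_0=\one$ for any $m\in\max\P$. In particular, for such $m$ one has $\P_{\leq m}=\L_{\leq m}$, a geometric lattice.

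For the direction $(\Leftarrow)$, assuming $x$ is modular in $\L$, I would set $y:=m\wedge_\L x$ for each $m\in\max\P$. A standard rank calculation combining both forms of \Cref{rem:modular-equivalent} shows that $y$ is modular in $\L_{\leq m}$. That $y$ lies in $\max\Q$ follows from the same line of reasoning used at the end of the proof of \Cref{lem1trivial}: any element of $\max(\P_{\leq x})$ is covered by $x$ in $\L$, so $\rk(y)=\rk(x)-1$, matching the rank of $\max\Q$.

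For the direction $(\Rightarrow)$, which I expect to be the main technical obstacle, I propose to prove modularity of $x$ by showing that every complement $z$ of $x$ in $\L$ has rank exactly $\rk(\L)-\rk(x)$, so that the complements form an antichain. By \Cref{lemcompL}, any such $z$ lies in $\P$, and since $\P$ is chain-finite I may pick $m\in\max\P$ with $m\geq z$. The M-ideal property combined with the uniqueness argument above yields that $y:=m\wedge_\L x$ is the unique element of $\max\Q$ below $m$, is modular in $\L_{\leq m}$, and satisfies $\rk(y)=\rk(x)-1$, while $\rk(m)=\rk(\L)-1$. Since $y\wedge_\L z\leq x\wedge_\L z=\zero$, applying modularity of $y$ in $\L_{\leq m}$ in the form \Cref{rem:modular-equivalent}.\eqref{def:eqmod:1} gives $\rk(y\vee_\L z)=\rk(y)+\rk(z)\leq\rk(m)$, so $\rk(z)\leq\rk(\L)-\rk(x)$. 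The reverse inequality is immediate from upper semimodularity of $\L$ applied to $x$ and $z$, using $x\vee_\L z=\one$ and $x\wedge_\L z=\zero$.

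For the ``consequently'' statement, I would induct along the supersolvability chain. Given $\P$ supersolvable via $\{\zero\}=\Q_0\subset\cdots\subset\Q_n=\P$, each interval $\Q_{i+1}$ is itself a geometric semilattice with underlying geometric lattice $\L_{\leq x_{i+1}}$ (via \Cref{thm:WW}), so the first part of the theorem applied inside $\Q_{i+1}$ yields elements $x_i$ modular in $\L_{\leq x_{i+1}}$; a calculation using the modular identity \Cref{rem:modular-equivalent}.\eqref{def:eqmod:2} applied to the (inductively established) modularity of $x_{i+1}$ in $\L$ then promotes modularity in the interval to modularity in $\L$ itself. Inserting $a_0$, which is automatically modular as an atom of a geometric lattice, completes the chain. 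The converse follows immediately from the first part of the theorem together with the elementary observation that modularity in $\L$ restricts to modularity in any interval $\L_{\leq y}$.
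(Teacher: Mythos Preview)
Your proposal is correct and follows the same overall scaffold as the paper (reduce via \Cref{lem1trivial} to the equivalence between \Cref{def:mod:2} for $\Q=\P_{\leq x}$ and modularity of $x$ in $\L$, then deduce the supersolvability statement), but the tactical choices in each direction differ.

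For the implication ``$x$ modular $\Rightarrow$ \Cref{def:mod:2}'', both arguments set $y=m\wedge_\L x$; the paper shows that every complement of $y$ in $\P_{\leq m}$ is already a complement of $x$ in $\L$ (so modularity of $x$ forces the complements of $y$ to be an antichain), whereas you invoke the modular identity \Cref{rem:modular-equivalent}.\eqref{def:eqmod:2} directly to conclude that $y$ is modular in $\L_{\leq m}$. For the reverse implication, the paper argues by contradiction: two comparable complements of $x$ are transported via \Cref{lemcompL} to comparable complements of $y$ inside $\P_{\leq m}$, contradicting modularity of $y$. Your route is a direct rank computation---sandwiching $\rk(z)$ between $\rk(\L)-\rk(x)$ (from semimodularity) and the same value (from modularity of $y=m\wedge_\L x$ in $\L_{\leq m}$)---which avoids \Cref{lemcompL} entirely and is arguably more elementary. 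Finally, for the ``consequently'' clause the paper is content with a one-line bijection of chains, while you spell out the inductive promotion of modularity from $\L_{\leq x_{i+1}}$ to $\L$; your calculation with \Cref{rem:modular-equivalent}.\eqref{def:eqmod:2} does indeed yield this (and is exactly the content of the Ziegler result the paper cites later, in the proof of \Cref{thm:affinefib}). Both approaches buy the same conclusion; yours is a bit more self-contained, the paper's makes fuller use of the complement-transfer lemma it has already set up.
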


\begin{proof} 
By \Cref{lem1trivial}, it suffices to prove that for $x\in\L_{\geq a_0}$, the order ideal $\Q=\P_{\leq x}$ satisfies \Cref{def:mod:2} if and only if $x$ is modular in $\L$.

First, suppose that $\Q$ is an M-ideal, and we show that $x$ is modular in $\L$.
If $x$ is not modular in $\L$
there are two complements $z',z''$ of $x$ in $\L$ such that $z'\leq z''$. By
\Cref{lemcompL}, $z',z''\in \P$ and both are complements in $\P$ to every $y\in \max\Q$. Since $\Q$ is an M-ideal we can choose $m\in \max\P_{\geq z''}$ and $y\in \max\Q_{\leq m}$. Then $y$ is modular in $\P_{\leq m}$ and $z',z''$ are comparable complements of $y$ in $\P_{\leq m}$ -- a contradiction. Thus, $x$ is a modular element in $\L$.

Now assume that $x$ is modular in $\L$.  
Take $m\in \max \P$ and consider $y:= x\wedge_\L m$. Since $m\lessdot \one$,  modularity of $x$ implies $\rk(x)-\rk(y)=1$ and hence $y\lessdot x$. Thus $y\in \max \Q$, and $x\wedge z = y\wedge z$ for all $z\in \P_{\leq m}$ 
(indeed: by modularity $(x\wedge z)\vee y = x \wedge (z\vee y) \geq y$, and  since $z\vee y\not \geq x$ since $z\vee y\in \P_{m}$, $y\lessdot x$ implies $(x\wedge z)\vee y = y$, hence $(x\wedge z)\leq y$ and the claim follows). In particular, for $z\in \P_{\leq m}$ we have that $z\wedge y=\zero$ implies $z\wedge x=0$. Since trivially $z\vee y=\one$ implies $z\vee x=\one$, we have that every complement of $y$ in $\P_{\leq m}$ is a complement of $x$ in $\L$, thus modularity of $x$ implies modularity of $y$.

The claim about supersolvability follows because the chain $\zero\subset \P_{\leq x_1}\subset\cdots\subset\P_{\leq x_n}=\P$ will satisfy \Cref{def:ss} in $\P$ if and only if the chain $\zero\subset\L_{\leq a_0}\subset\L_{\leq x_1}\subset\cdots\subset\L_{\leq x_n}=\L$ does in $\L$.
\end{proof}

\subsection{Affine hyperplane arrangements}
Let $V\cong\C^n$ be a complex vector space, and let $\A$ be an arrangement of affine hyperplanes in $V$. 
Associated to $\A$ is a polynomial $f_\A\in\C[x_1,\dots,x_n]$ whose solution set is the union of the hyperplanes in $\A$.
Denote by $f_{c\A}\in\C[x_0,x_1,\dots,x_n]$ the homogenization of $f_\A$. 
The \emph{cone} of $\A$, denoted by $c\A$, is the linear hyperplane arrangement in $\C^{n+1}$ whose hyperplanes are the (linear) components of the solution set of $f_{c\A}$.
The cone $c\A$ has one more hyperplane than $\A$ does, namely $H_0=\ker(x_0)$.
This coning construction can be reversed to define the \emph{de-cone} $d\A$ of any linear hyperplane arrangement, see \cite{OT}.

\begin{remark}
The poset of layers of the affine arrangement $\A$ has the structure of a geometric semilattice since $\P(\A)\cong \P(c\A)\setminus\P(c\A)_{\geq H_0}$. Note that \Cref{cor:nice:GP} shows that $\P(\A)$ is supersolvable according to \Cref{def:ss} exactly when $\A$ is supersolvable in the sense of the definition given by Falk and Terao  \cite[Remark 2.6]{falk-terao}.
\end{remark}

The following extends \Cref{def:ft} to affine arrangements:
\begin{definition}
We inductively define an affine hyperplane arrangement $\A$ in a complex vector space $V$ to be \defhno{fiber-type} if either $\dim(V)=1$ or there is a choice of coordinates $V\cong\C^n$ and an arrangement $\B$ in $\C^{n-1}$ such that the projection $p:\C^n\to \C^{n-1}$ onto the first $(n-1)$ coordinates restricts to a fiber bundle $M(\A)\to M(\B)$ whose fibers are homeomorphic to $\C$ with finitely many points removed.
\end{definition}

This definition allows us to be obtain an affine analogue to Terao's Fibration Theorem \cite{teraofibthm}.

\begin{theorem}\label{thm:affinefib}
Let $\A$ be an essential arrangement of affine hyperplanes in a complex vector space. Then $\P(\A)$ is supersolvable if and only if $\A$ is fiber-type.
\end{theorem}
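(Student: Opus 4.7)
The plan is to reduce the theorem to Terao's linear fibration theorem (our \Cref{thm:ft=ss}) via the coning and de-coning constructions, with \Cref{thm:sscone} providing the dictionary between supersolvability of the affine arrangement $\A$ and that of its cone $c\A$.

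By \Cref{thm:sscone}, the poset $\P(\A)$ is supersolvable if and only if $\P(c\A)$ is supersolvable via a chain of modular elements passing through the cone hyperplane $H_0 = \{x_0 = 0\}$. Applying \Cref{thm:ft=ss} to the linear (hence abelian) arrangement $c\A$, this is equivalent to $c\A$ being (linear) fiber-type via a tower of fibrations in which, at each stage, the M-ideal corresponding to the projection contains the image of $H_0$. By \Cref{thm:fib} and \Cref{cor:Yislayer}, the kernel $Y_k$ of the $k$-th projection is then a line lying inside the current image of $H_0$.

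The central observation used to transfer between linear fibrations for $c\A$ and affine fibrations for $\A$ is that $M(c\A) \cong \C^\times \times M(\A)$ via the chart $(x_0, \vec{x}) \mapsto (x_0, \vec{x}/x_0)$, under which the slice $\{x_0 = 1\} \cap M(c\A)$ is identified with $M(\A)$. Whenever the kernel $Y$ of a linear projection $p \colon \C^{n+1} \to \C^n$ lies inside $H_0$, the map $p$ preserves the coordinate $x_0$ and therefore restricts to an affine projection on the slice $\{x_0 = 1\} \cong \C^n$, with target the slice $\{x_0 = 1\} \cong \C^{n-1}$. Since $Y \subseteq H_0$, the line $u+Y$ does not meet $H_0$ for $u$ with $x_0 \neq 0$, so a linear fiber bundle $M(c\A) \to M(\overline{c\A_Y})$ with fiber homeomorphic to $\C$ minus $\ell$ points restricts to an affine fiber bundle $M(\A) \to M(d\,\overline{c\A_Y})$ with a fiber of the same homeomorphism type. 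Conversely, any affine fibration $\A \to \B$ lifts (by extending each projection to preserve $x_0$) to a linear fibration of $c\A$ whose kernel lies inside $H_0$.

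Iterating the correspondence along the modular chain establishes the equivalence in both directions. The main obstacle will be verifying that the condition ``the modular chain passes through $H_0$'' propagates faithfully through each stage of the iteration: one must check that the coning and de-coning operations commute with the successive projections. This should follow from the $\C^\times$-equivariance described above, combined with the fact that de-coning removes precisely the hyperplane corresponding to the bottom atom $H_0$ of the modular chain, so that after projecting out $Y_k$ the remaining modular chain in the image still begins with the image of $H_0$.
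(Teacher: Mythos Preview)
Your plan is essentially the paper's proof: both directions go through \Cref{thm:sscone} and the cone/de-cone homeomorphism $M(c\A)\cong M(\A)\times\C^\times$, slicing at $x_0=1$ to pass between linear bundles for $c\A$ (with kernel contained in $H_0$) and affine bundles for $\A$. The paper writes out the explicit coordinate compositions, but the geometry is exactly what you describe.

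The one place where the paper supplies a tool you only flag as an obstacle is the backward direction. After lifting an affine bundle $M(\A)\to M(\B)$ to a linear bundle $M(c\A)\to M(c\B)$, one knows the kernel $Y$ is a modular element of $\P(c\A)$ contained in $H_0$, and by induction $\P(c\B)\cong\P(c\A)_{\leq Y}$ carries a modular chain through the image of $H_0$. To finish, one must lift that chain to a modular chain in $\P(c\A)$ through $H_0$; the paper invokes \cite[Proposition~2.2.1a)]{ziegler}, which says that modular elements of $\L_{\leq y}$ are modular in $\L$ when $y$ itself is modular. Your appeal to ``$\C^\times$-equivariance'' and ``de-coning removes precisely $H_0$'' does not by itself supply this lattice-theoretic fact, so you should cite or prove it.
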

\begin{proof}
We proceed by induction on the rank of $\A$.
Notice that every rank-one locally geometric poset is supersolvable, and every rank-one affine arrangement is fiber-type. 
Let $H_0$ be the additional hyperplane in $c\A$ so that $\P(\A)=\P(c\A)\setminus\P(c\A)_{\geq H_0}$, which under our choice of coordinates on $\C^{n+1}$ will be the first coordinate hyperplane. 

Assume that $\P(\A)$ is supersolvable, which implies by \Cref{thm:sscone} that there is a modular chain $\zero<H_0=Y_1<Y_2<\cdots<Y_{n-1}<Y_n=\one$ in $\P(c\A)$. Abbreviate $Y:=Y_{n-1}$.%

Then we may pick coordinates on the ambient vector space of $c\A$ so that the projection $p:\C^{n+1}\to\C^n$ is the quotient by $Y$,
and so that this restricts to a fiber bundle $M(c\A)\to M(c\qA)$.
The following composition is then a fiber bundle
\begin{center}
\begin{tikzpicture}[scale=0.825, every node/.style={scale=0.8}]
\node (2) at (0,0) {$M(\A)\times\C^\times$};
\node (3) at (4,0) {$M(c\A)$};
\node (4) at (8,0) {$M(c\qA)$};
\node (5) at (12,0) {$M(d(c\qA))\times\C^\times$};
\draw[->] (2) edge node[above]{$\cong$} (3);
\draw[->] (3) edge node[above]{$p$} (4);
\draw[->] (4) edge node[above]{$\cong$} (5);
\node (a) at (0,-.75) {$((x_1,\dots,x_n),x_0)$};
\node (b) at (4,-.75)  {$(x_0,x_0x_1,\dots,x_0x_n)$};
\node (c) at (8,-.75) {$(x_0,x_0x_1,\dots,x_0x_{n-1})$};
\node (d) at (12,-.75) {$((x_1,\dots,x_{n-1}),x_0)$};
\draw[|->] (a)--(b);
\draw[|->] (b)--(c);
\draw[|->] (c)--(d);
\end{tikzpicture}
\end{center}
where the first and last maps are the standard cone/de-cone homeomorphisms (see eg. \cite[Proposition 5.1]{OT}).
The subbundle obtained by setting $x_0=1$ is then a fiber bundle $M(\A)\to M(d(c\qA))$.
Now $\P(c\qA)$ is isomorphic to the subposet $\P(c\A)_{\leq Y}$, therefore $c\qA$ is supersolvable via the chain of modular elements $\zero<H_0/Y<Y_2/Y<\cdots<Y_{n-1}/Y$, and its de-cone $d(c\qA)$ with respect to $H_0/Y$ is supersolvable by \Cref{thm:sscone}. 
By induction, $d(c\qA)$ is fiber-type. Therefore, $\A$ will also be fiber-type.

Conversely, assume that $\A$ is fiber-type, and choose coordinates so that the projection $\C^n\to \C^{n-1}$ restricts to a fiber bundle $p:M(\A)\to M(\B)$ for some fiber-type arrangement $\B$ in $\C^{n-1}$.
The composition
\begin{center}
\begin{tikzpicture}[scale=0.825, every node/.style={scale=0.8}]
\node (1) at (0,0) {$M(c\A)$};
\node (2) at (4,0) {$M(\A)\times\C^\times$};
\node (3) at (8,0) {$M(\B)\times\C^\times$};
\node (4) at (12,0) {$M(c\B)$};
\draw[->] (1) edge node[above]{$\cong$} (2);
\draw[->] (2) edge node[above]{$p\times\operatorname{id}$} (3);
\draw[->] (3) edge node[above]{$\cong$} (4);
\node (a) at (0,-.75) {$(x_0,x_1,\dots,x_n)$}; 
\node (b) at (4,-.75)  {$\left(\left(\frac{x_1}{x_0},\dots,\frac{x_n}{x_0}\right),x_0\right)$};
\node (c) at (8,-.75) {$\left(\left(\frac{x_1}{x_0},\dots,\frac{x_{n-1}}{x_0}\right),x_0\right)$};
\node (d) at (12,-.75) {$(x_0,x_1,\dots,x_{n-1})$};
\draw[|->] (a)--(b);
\draw[|->] (b)--(c);
\draw[|->] (c)--(d);
\end{tikzpicture}
\end{center}
is then a fiber bundle, which we denote by $\hat{p}$. 
This fiber bundle is the quotient by some dimension-one modular element $Y\in \P(c\A)$, which is necessarily contained in the additional hyperplane $H_0$.
Since $\B$ is fiber-type, $\P(\B)$ is supersolvable by induction. 
Then by \Cref{thm:sscone}, $\P(c\B)$ is supersolvable via a chain of modular elements passing through the additional hyperplane $\hat{p}(H_0)$ of $c\B$.
Via the poset isomorphism $\P(c\A)_{\leq Y}\cong \P(c\B)$ induced by $\hat{p}$, and using modularity of $Y$ in $\P(c\A)$, \cite[Proposition 2.2.1a)]{ziegler} yields a chain of modular elements in $\P(c\A)$ passing through $H_0$.%

Again by \Cref{thm:sscone}, we conclude that $\P(\A)$ is supersolvable.
\end{proof}

\subsection{Group quotients and topological covers}

In the following we study the behaviour of supersolvability of posets with respect to certain types of group actions.

Let $G$ be a group. An \textit{action} of $G$ on a poset $\P$ is any group homomorphism $G\to\Aut(\P)$ from $G$ to the group of automorphisms of $\P$. Given a group element $g\in G$ it is customary to denote the associated automorphism by $g:\P\to\P$. For $x\in \P$ we will then often write $gx$ for $g(x)$. Following \cite{dD}, we will focus on the following special type of action.

\begin{definition}
\label{def:translative}
Let $\P$ be a poset with an action of a group $G$. We call the action \defh{translative} if $x\vee gx\neq\emptyset$ implies $x=gx$ for every $x\in \P$ and every $g\in G$.
\end{definition}

Translative actions were introduced in \cite{DR} in order to model periodic hyperplane arrangements, as we illustrate in the next example.

\begin{example}\label{ex-translation} Consider the periodic arrangement of lines represented in \Cref{fig:lines}. The standard generators $e_1,e_2$ of the group $G=\mathbb Z^2$ act via the traslations given by the arrow in the picture. This action is translative. Indeed, for any $g\in G$ and any line $x$, the lines $gx$ and $x$ have the same direction. Hence, if they intersect (i.e., if $x\vee gx \neq \emptyset$), they must be identical ($x=gx$). 
\end{example}

Write $Gx=\{gx\st g\in G\}$ for the orbit of an $x\in\P$ under $G$. If a group $G$ acts on a poset $\P$ we define the set of orbits $\P/G:=\{Gx\st x\in \P\}$. On it we consider the relation given by $Gx\leq Gy$ if there is $g\in G$ with $x\leq gy$.

\begin{lemma}[{\cite[Lemmas 2.12 and 2.13]{dD}}] \label{lociso}
Let $\P$ be a poset with a translative action of a group $G$. Then the relation $\leq$ on $P/G$ is a partial order relation. Moreover, for every $z\in \P$ the function $f_z:\P_{\leq z}\to (\P/G)_{\leq Gz}$, $p\mapsto Gp$, defines an isomorphism of posets. 
\end{lemma}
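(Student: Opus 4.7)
The plan is to verify the three axioms of a partial order on $\P/G$ and then establish the four conditions (well-defined, bijective, order-preserving in both directions) needed for $f_z$ to be a poset isomorphism. The single recurring tool is translativity: any two elements of the same $G$-orbit that admit a common upper bound must coincide.

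For the partial order, reflexivity of $\leq$ on $\P/G$ is immediate. For transitivity, if $Gx \leq Gy$ and $Gy \leq Gz$ via $x \leq gy$ and $y \leq hz$, then applying the order automorphism $g$ to $y \leq hz$ gives $gy \leq ghz$, so $x \leq ghz$ and $Gx \leq Gz$. For antisymmetry, suppose $Gx \leq Gy$ and $Gy \leq Gx$ via $x \leq gy$ and $y \leq hx$. Applying $g$ to the second inequality gives $gy \leq ghx$, whence $x \leq gy \leq ghx$; in particular $ghx$ is a common upper bound of $x$ and $ghx$ (since $x \leq ghx$), so $x \vee ghx \neq \emptyset$ and translativity forces $x = ghx$. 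This squeezes $x \leq gy \leq x$, giving $x = gy$ and hence $Gx = Gy$.

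For $f_z$, the map is well-defined into $(\P/G)_{\leq Gz}$ by definition of the order on $\P/G$, and is order-preserving for the same reason. Injectivity uses translativity directly: if $p, p' \in \P_{\leq z}$ with $Gp = Gp'$, write $p' = gp$; then $z$ is an upper bound of both $p$ and $gp$, so $p \vee gp \neq \emptyset$ and translativity gives $p = gp = p'$. Surjectivity is a routine lift: given $Gq \leq Gz$ via $q \leq gz$, the element $g^{-1}q$ lies in $\P_{\leq z}$ and maps to $Gq$.

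The last step — that $f_z^{-1}$ is order-preserving — is the real point, where translativity must be used with a twist. Suppose $p, p' \in \P_{\leq z}$ and $Gp \leq Gp'$ via some $g\in G$ with $p \leq gp'$. Applying $g^{-1}$, one obtains $g^{-1}p \leq p' \leq z$, so both $p$ and $g^{-1}p$ lie below $z$. Thus $z$ witnesses $p \vee g^{-1}p \neq \emptyset$, and translativity forces $p = g^{-1}p$. Combining this with $g^{-1}p \leq p'$ yields $p \leq p'$, which is exactly what is needed. I expect this final step to be the main (albeit mild) obstacle, since one has to realize that the translate of $p$ provided by $g^{-1}$ is the one that shares the ceiling $z$ with $p$ itself, rather than trying to compare $gp'$ with $p'$ directly.
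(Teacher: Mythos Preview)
Your proof is correct. Note, however, that the paper does not give its own proof of this lemma: it is stated with a citation to \cite[Lemmas 2.12 and 2.13]{dD} and used as a black box, so there is no in-paper argument to compare against. Your verification of the partial-order axioms and the four properties of $f_z$ is clean and complete; the use of translativity in the antisymmetry step and in showing that $f_z^{-1}$ is order-preserving is exactly the point of the definition.
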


\begin{example}[{\cite[Section 9]{dD}}]\label{ex:UC}
Let $\A$ be an abelian Lie group arrangement of rank $n$ in $\G=(S^1)^d\times \R^v$. Then the lift of all $H_\alpha$, $\alpha\in\A$ to the universal cover of $T$ is an arrangement $\A^\upharpoonright$ of affine subspaces in $\mathbb R^{(d+v)n}$. Its poset of layers $\P(\A^\upharpoonright)$ is a geometric semilattice and the action on $\A^\upharpoonright$ of the group of deck transformations induces a translative action of $\mathbb Z^{nd}$ on $\P(\A^\upharpoonright)$. Then, $\P(\A)$ is isomorphic to the quotient  $\P(\A^\upharpoonright)/\mathbb Z^{dn}$. For instance, \Cref{fig:lines} depicts the arrangement $\A^\upharpoonright$ for the toric arrangement $\A$ of \Cref{fig:loctriv}.
\end{example}

\begin{example}\label{ex:IC}
Let $\Gamma_1$ be a finitely-generated free abelian group, and let $\Gamma_2$ be a subgroup of $\Gamma_1$ of finite index. Let $\G$ be a connected abelian Lie group and let $\A\subseteq \Gamma_2$ be a finite subset. 
Call $\P_1$, $\P_2$ the posets of layers of the arrangements defined by $\A$ in  $T_1:=\Hom(\Gamma_1,\G)$, resp.\ $T_2:=\Hom(\Gamma_2,\G)$. 
Now since $\G$ is a product of copies of the injective $\mathbb Z$-modules $S^1$ and $\R$, $\G$ is injective itself and so $\Hom(-,\G)$ is exact. This implies that the inclusion $\Gamma_2\hookrightarrow \Gamma_1$ induces a covering map $T_1\to T_2$ whose group of deck transformations is the discrete subgroup $G$ of $T_1$ that is the image of the inclusion  
$\Hom(\Gamma_1/\Gamma_2,\G)\hookrightarrow\Hom(\Gamma_1,\G)$.
Recall that $\A\subseteq\Gamma_2$, thus for every $\alpha\in\A$ and $g\in G$ one has $g(\alpha)=0$.
This implies $GH_\alpha=H_\alpha$ for every $\alpha\in\A$, and so $G$ acts on $\P_1$.
Moreover, since every layer $Y$ of $\A$ in $T_1$ is a coset of a subgroup, its image under a deck transformation is another coset of the same subgroup -- hence either identical or disjoint with $Y$. This shows that the induced action of the deck transformations is translative on $\P_1$, and $\P_2$ is the quotient of $\P_1$ by this action. 
\end{example}

\begin{lemma}\label{lem:geometriquot} Let $\P$ be a locally geometric poset with a translative action of a group $G$. Then, $\P/G$ is a locally geometric poset. Moreover, if $\P$ is geometric then so is $\P/G$.
\end{lemma}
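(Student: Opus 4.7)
The plan is to handle the two assertions separately, leveraging the local isomorphism of \Cref{lociso} and, crucially, translativity.

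For the first assertion, I would verify local geometricity of $\P/G$ in three observations. (a) The minimum $\zero$ of $\P$ is fixed by every automorphism (being the unique rank-zero element of a graded poset), so $G\zero = \{\zero\}$ is a minimum of $\P/G$. (b) The assignment $Gz \mapsto \rk(z)$ is well-defined because $G$ acts by rank-preserving automorphisms, and by \Cref{lociso} it restricts to the usual rank function on each principal downset, so it is a rank function on $\P/G$ and makes $\P/G$ graded. (c) For every $Gz \in \P/G$, the principal downset $(\P/G)_{\leq Gz}$ is isomorphic to $\P_{\leq z}$ via \Cref{lociso}, hence a geometric lattice.

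For the second assertion, I assume $\P$ is geometric and verify condition \ref{A} for $\P/G$. Given $Gx, Gy \in \P/G$ with $\rk(Gx) < \rk(Gy)$ and $I' = \{Ga_1, \dots, Ga_k\} \subseteq A(\P/G)$ with $|I'| = \rk(Gy)$ and $Gy \in \bigvee I'$, the opening move is to normalize representatives. Each relation $Ga_i \leq Gy$ provides, after replacing $a_i$ by an appropriate translate, some $a_i \leq y$ in $\P$; under the isomorphism $f_y : \P_{\leq y} \to (\P/G)_{\leq Gy}$ the condition $Gy \in \bigvee I'$ translates to $y = \bigvee\{a_1, \dots, a_k\}$ inside the lattice $\P_{\leq y}$. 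Since any upper bound of the $a_i$ in $\P$ that lies below $y$ must equal $y$, we still have $y \in \bigvee\{a_1, \dots, a_k\}$ in $\P$. Now pick any representative $x$ of $Gx$; since $\rk(x) < \rk(y)$ and $|\{a_1, \dots, a_k\}| = \rk(y)$, applying \ref{A} in $\P$ yields some $a := a_i$ with $a \not\leq x$ and $a \vee x \neq \emptyset$. The latter descends immediately: any common upper bound in $\P$ yields one in $\P/G$, so $Ga \vee Gx \neq \emptyset$.

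The main obstacle is promoting $a \not\leq x$ (a statement about one representative) to $Ga \not\leq Gx$ (a statement about the whole orbit), since a priori some translate $ga$ might satisfy $ga \leq x$. Here translativity intervenes. Pick a common upper bound $z$ of $a$ and $x$ in $\P$; the chain $ga \leq x \leq z$ combined with $a \leq z$ shows that $a$ and $ga$ share a common upper bound in $\P$, so $a \vee (ga) \neq \emptyset$. Translativity then forces $a = ga$, and consequently $a = ga \leq x$ contradicts $a \not\leq x$. Therefore no such $g$ exists, $Ga \not\leq Gx$, and condition \ref{A} holds for $\P/G$.
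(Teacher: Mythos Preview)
Your proof is correct and follows essentially the same approach as the paper: lift the data to $\P$ via the local isomorphism of \Cref{lociso}, apply \ref{A} there, and push back down. The only cosmetic difference is in the final step showing $Ga \not\leq Gx$: the paper invokes the local isomorphism $f_{z'}:\P_{\leq z'}\to(\P/G)_{\leq Gz'}$ (with $z'\in a\vee x$) to transfer the non-comparability $a\not\leq x$ directly, whereas you unpack the same idea by hand using translativity and the common upper bound $z$. Since the injectivity of $f_{z'}$ in \Cref{lociso} is itself proved via exactly this translativity argument, the two are equivalent.
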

\begin{proof}
The first claim is an immediate corollary of \Cref{lociso}. For the second claim, suppose $\P$ satisfies \ref{A} and let $f:\P\to\P/G$ denote the quotient map. Let $x,y\in \P/G$ with $\rk(x)<\rk(y)$ and choose $I\subseteq A(\P/G)$ with $\rk(y)=\vert I \vert$ and $y\in \bigvee I$. Choose $y'\in f^{-1}(y)$ and $x'\in f^{-1}(x)$. Since the $G$-action preserves rank, $\rk(x)=\rk(x')$ and $\rk(y)=\rk(y')$. Let $I'$ be the preimage of $I$ under the local isomorphism $\P_{\leq y'}\simeq (\P/G)_{\leq y}$ given by \Cref{lociso}. Then $\vert I'\vert=\rk(y')>\rk(x')$. Using property \ref{A} in $\P$ we can choose $a'\in I'$ such that $a'\not\leq z$ and $a'\vee x'\neq\emptyset$. Choose $z'\in a'\vee x'$ and let $a:=f(a')\in I$. Now $f(z')\geq a$ and $f(z')\geq f(x')=x$. Moreover, $a\in I$ and, with \Cref{lociso} applied to $\P_{\leq z'}$, $a\not\leq x$. Thus $\P/G$ satisfies \ref{A}.
\end{proof}

\begin{corollary}\label{cor:geoquot}
Let $\A$ be any abelian Lie group arrangement. Then the poset $\P(\A)$ is geometric.
\end{corollary}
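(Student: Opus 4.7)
The plan is to bootstrap from the affine/semilattice case, which the paper has already set up, by combining \Cref{ex:UC} with \Cref{lem:geometriquot}. First, I would invoke \Cref{ex:UC} to realize $\P(\A)$ as the quotient $\P(\A^\upharpoonright)/\mathbb{Z}^{dn}$, where $\P(\A^\upharpoonright)$ is the poset of layers of the lifted affine subspace arrangement in $\mathbb{R}^{(d+v)n}$ and the $\mathbb{Z}^{dn}$-action coming from deck transformations is translative. The key point furnished by that example is that $\P(\A^\upharpoonright)$ is a geometric semilattice.

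Next, I would verify that a geometric semilattice is automatically a geometric poset in the sense of \Cref{def:geom}. This is noted in the paragraph preceding \Cref{def:geom}: condition \ref{A} is precisely the semilattice axiom (G4) of Wachs--Walker. So no new work is needed here beyond citing the comparison with \cite{WW}.

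Finally, the conclusion follows from the second assertion of \Cref{lem:geometriquot}, which says that translative quotients preserve geometricity. Applying this lemma to $\P(\A^\upharpoonright)$ with its $\mathbb{Z}^{dn}$-action yields that $\P(\A)\cong \P(\A^\upharpoonright)/\mathbb{Z}^{dn}$ is geometric, as desired.

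Since every ingredient is already developed earlier in the section, I do not foresee a significant obstacle; the proof is essentially a one-line assembly. The only thing worth being careful about is to state explicitly that the lifted arrangement $\A^\upharpoonright$ gives a genuine geometric semilattice (so that \ref{A} holds for $\P(\A^\upharpoonright)$ and \Cref{lem:geometriquot} applies) before passing to the quotient.
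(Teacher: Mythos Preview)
Your proposal is correct and follows essentially the same route as the paper: the paper's proof invokes \Cref{lem:geometriquot} together with the fact that $\P(\A)$ is a translative quotient of a geometric semilattice (it cites \cite[Lemma 9.2.(ii)]{dD} directly, which is precisely the content you are pulling from \Cref{ex:UC}), and notes that geometric semilattices are geometric posets.
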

\begin{proof}
The claim follows from \Cref{lem:geometriquot} because $\P(\A)$ is a quotient  of a geometric semilattice (see \cite[Lemma 9.2.(ii)]{dD}) by a translative action, and geometric semilattices are geometric posets.
\end{proof}

\begin{lemma} \label{lemvee}
Let $\P$ be a poset with a translative action of a group $G$. Then, for all $x_1,x_2\in \P$,
$$Gx_1\vee_{\P/G} Gx_2 = \left.\left(\bigcup_{g_1,g_2\in G} g_1x_1\vee_\P g_2x_2\right)\right/G.$$ 
\end{lemma}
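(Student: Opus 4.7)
The plan is to prove the identity by establishing the two inclusions separately, using translativity as the key tool to force orbit representatives to coincide whenever they share a common upper bound.

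For the forward inclusion, I would take $Gy$ to be a minimal upper bound of $Gx_1$ and $Gx_2$ in $\P/G$. Unpacking the definition of the order on $\P/G$, there exist $h_1,h_2\in G$ with $h_1^{-1}x_1\leq y$ and $h_2^{-1}x_2\leq y$, so that $y$ is a common upper bound in $\P$ of these two elements. Pick $z\in h_1^{-1}x_1\vee_\P h_2^{-1}x_2$ with $z\leq y$. Then $Gz$ is an upper bound of $Gx_1$ and $Gx_2$ satisfying $Gz\leq Gy$, so minimality yields $Gz=Gy$, placing $Gy$ in the right-hand side.

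For the reverse inclusion, I would take $y\in g_1x_1\vee_\P g_2x_2$ for some $g_1,g_2\in G$. The upper bound property passes to orbits immediately. For minimality, suppose $Gz\leq Gy$ is another upper bound of $Gx_1$ and $Gx_2$ in $\P/G$. Choose the representative $z$ so that $z\leq y$, and find $h_1,h_2\in G$ with $h_i^{-1}x_i\leq z$ for $i=1,2$. Then $g_ix_i$ and $h_i^{-1}x_i$ both lie in the orbit $Gx_i$ and both sit below $y$; translativity (applied with $g=g_ih_i$) then forces $g_ix_i=h_i^{-1}x_i$. Hence $z$ is a common upper bound of $g_1x_1$ and $g_2x_2$ bounded above by $y$, and minimality of $y$ in $g_1x_1\vee_\P g_2x_2$ gives $z=y$, so $Gz=Gy$.

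The main technical obstacle is the use of translativity in the reverse inclusion: without it, a smaller orbit $Gz\leq Gy$ could a priori realize joins against representatives of the $Gx_i$ different from those used by $y$, and the minimality of $y$ in $\P$ would tell us nothing about $z$. Translativity rigidifies the choice of representative in each orbit, ensuring that the formation of joins passes cleanly through the quotient $\P\to\P/G$.
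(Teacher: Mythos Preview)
Your argument is correct and follows the same two-inclusion strategy as the paper. The only substantive difference is packaging: the paper routes both directions through the local isomorphism $f_z:\P_{\leq z}\to(\P/G)_{\leq Gz}$ of \Cref{lociso} (which encodes translativity), whereas you invoke translativity directly. In the reverse inclusion this amounts to the same computation.

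One small point in your forward inclusion: you write ``pick $z\in h_1^{-1}x_1\vee_\P h_2^{-1}x_2$ with $z\leq y$,'' but the lemma is stated for an arbitrary poset, so such a minimal upper bound need not exist below a given upper bound $y$. The paper sidesteps this by showing that the chosen representative $y$ is \emph{itself} in $h_1^{-1}x_1\vee_\P h_2^{-1}x_2$: if $p\leq y$ is any common upper bound of the $h_i^{-1}x_i$, then $Gp\leq Gy$ is an upper bound of $Gx_1,Gx_2$, so minimality forces $Gp=Gy$, and then injectivity of $f_y$ (equivalently, translativity applied to $p=gy\leq y$) gives $p=y$. This fix is immediate and uses exactly the tool you already deploy in the other direction; in the chain-finite setting of the paper's applications your version is fine as written.
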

\begin{proof}  
Let $z\in g_1x_1\vee_\P g_2x_2$ for some $g_1,g_2\in \P$.  
In particular, $Gz\geq Gx_i$ for $i=1,2$ and, for every $Gp\in\P/G $ with $Gz\geq Gp\geq Gx_i$, $i=1,2$, it must be $z \geq_\P f_z^{-1}(p) \geq_\P g_ix_i$, $i=1,2$ (where $f_z$ is the isomorphism of \Cref{lociso}). Then, by definition of $z$, $f_z^{-1}(p)=z$. This means $Gp=Gz$ and proves the right-to-left inclusion in the claim.

For the left-to-right inclusion, let $Gz\in Gx\vee_{\P/G} Gy$. Then, by definition, there are $g_1,g_2\in G$ with $z\geq g_ix_i$ for $i=1,2$. Now for every $p\in\P$ with $z\geq p\geq g_ix_i$, $i=1,2$, we must have $Gx_i\leq f_z(p)\leq f_z(z)=Gz$ (again $f_z$ is the isomorphism of \Cref{lociso}). Now $Gz\in Gx_1\vee Gx_2$ implies $f_z(p)=f_z(z)$, and bijectivity of $f_z$ shows $p=z$, whence $z\in g_1x_1\vee_{\P} g_2x_2$.
\end{proof}

\begin{lemma} Let $\P$ be a chain-finite poset with an action of a group $G$ and let $\Q$ be a $G$-invariant subposet (i.e., $G\Q=\Q$). 
\begin{itemize}
\item[(i)] $\Q$ is an order ideal if and only if $\Q/G$ is.
\item[(ii)] $\Q$ is pure if and only if $\Q/G$ is.
\item[(iii)] If the action is translative, $\Q$ is join-closed if and only if $\Q/G$ is.
\end{itemize}
\end{lemma}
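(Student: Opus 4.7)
The plan is to handle the three parts in sequence, observing that (i) and (ii) are essentially formal consequences of $G$-invariance of $\Q$ together with the fact that $G$ acts by order-preserving automorphisms, while (iii) uses translativity in an essential way through the local isomorphism $\P_{\leq z}\cong(\P/G)_{\leq Gz}$ from \Cref{lociso}.

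For (i), both directions reduce to lifting a relation $Gx\leq Gy$ to a comparable pair $x\leq gy$ in $\P$, together with the observation that $gy\in\Q$ if and only if $y\in\Q$ by $G$-invariance. For (ii), I will first show that the quotient map induces a bijection between the $G$-orbits of maximal elements of $\Q$ and the maximal elements of $\Q/G$: one direction is immediate from the lifting argument, while the other uses chain-finiteness to run an ascending process $y < g_1 y < g_2 g_1 y < \cdots$ inside an orbit whose image in $\Q/G$ is maximal (each strict ascent is forced to stay in the orbit by maximality of $Gy$), terminating at an element of $\Q$ that is itself maximal. Since $G$ acts by automorphisms and \Cref{lociso} identifies $\rk_{\P}(x)=\rk_{\P/G}(Gx)$, purity transfers in both directions.

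Part (iii) is where translativity really enters. The central claim to establish is: for any $T\subseteq\P_{\leq z}$, the element $z$ is a minimal upper bound of $T$ in $\P$ if and only if $Gz$ is a minimal upper bound of $\{Gt\colon t\in T\}$ in $\P/G$. This generalizes \Cref{lemvee} from pairs to arbitrary subsets, and it follows from \Cref{lociso}, which identifies $\P_{\leq z}$ with $(\P/G)_{\leq Gz}$ as posets, so upper bounds of $T$ below $z$ correspond bijectively with upper bounds of $\{Gt\}$ below $Gz$ and minimality on one side implies minimality on the other. Granting this claim, both directions of (iii) become short: for $(\Leftarrow)$, given $z\in\bigvee_{\P}T$ with $T\subseteq\Q$, the claim yields $Gz\in\bigvee_{\P/G}\{Gt\colon t\in T\}\subseteq\Q/G$, whence $z\in\Q$ by $G$-invariance; for $(\Rightarrow)$, given $Gz$ a minimal upper bound of $S\subseteq\Q/G$, I would choose for each $Gx\in S$ a representative $x'\leq z$ (which exists since $Gz\geq Gx$), form $T=\{x'\}\subseteq\Q$, apply the claim to conclude $z\in\bigvee_{\P}T$, and invoke join-closedness of $\Q$ to get $z\in\Q$.

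The main obstacle is the central claim in (iii): one might worry that differing choices of representatives for each orbit in $S$ could produce competing minimal upper bounds in $\P/G$ not arising from any single $z\in\P$. The resolution is that this anxiety evaporates once one fixes the candidate $z$ (respectively $Gz$) and restricts to the principal order ideal $\P_{\leq z}$ (respectively $(\P/G)_{\leq Gz}$): on these intervals, \Cref{lociso}, available precisely because the action is translative, turns the problem into a statement about two genuinely isomorphic posets, so minimal upper bounds match up automatically.
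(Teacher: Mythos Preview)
Your argument is correct and aligns with the paper's very terse proof, which disposes of (i) and (ii) by noting that $G$ acts by order-preserving, chain-length-preserving automorphisms, and derives (iii) directly from \Cref{lemvee}. Your treatment of (iii) via the local isomorphism of \Cref{lociso} is essentially a restatement of (the obvious extension of) \Cref{lemvee} to arbitrary subsets; the one small wrinkle is that in (ii) your ascending-chain step and your invocation of \Cref{lociso} both tacitly use translativity, whereas the lemma as stated reserves that hypothesis for (iii)---but since $\P/G$ is only guaranteed to be a partial order under a translative action (cf.\ \Cref{lociso}), this assumption is implicitly in force throughout anyway.
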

\begin{proof} 
Since $G$ acts by order-preserving automorphisms, (i) is immediate and (ii) follows since those automorphisms preserve chain length. Claim (iii) is a consequence of \Cref{lemvee}.
\end{proof}

\begin{lemma} \label{lem:quotient1}
Let $\P$ be a locally geometric poset with a translative action of
a group $G$ and let $\Q$ be a $G$-invariant subposet of $\P$.  If $\Q\subseteq
\P$ satisfies \Cref{def:mod:1}, then so does $\Q/G\subseteq \P/G$. When $\P$
satisfies \ref{A}, the converse also holds.
\end{lemma}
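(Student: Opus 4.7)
The plan is to prove the two implications separately. The forward direction reduces to \Cref{lemvee}, while the converse uses \ref{A} together with translativity to rule out the bad subcases.

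For the forward direction, suppose $\Q$ satisfies \Cref{def:mod:1}. Given $Gy\in\Q/G$ with $y\in\Q$ (by $G$-invariance) and $Ga\in A(\P/G)$ with $a\in A(\P)$ (since the local isomorphism of \Cref{lociso} preserves rank) such that $Ga\vee_{\P/G}Gy=\emptyset$, I invoke \Cref{lemvee} to get $g_1 a\vee_\P g_2 y=\emptyset$ for every $g_1,g_2\in G$. Specialising to $g_1=g_2=e$ yields $a\vee y=\emptyset$, so \Cref{def:mod:1} in $\P$ forces $a\in\Q$ and hence $Ga\in\Q/G$.

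For the converse, assume $\P$ satisfies \ref{A} and $\Q/G$ satisfies \Cref{def:mod:1}. Take $y\in\Q$ and $a\in A(\P)$ with $a\vee y=\emptyset$; the goal is $a\in\Q$. If $Ga\vee_{\P/G}Gy=\emptyset$, then \Cref{def:mod:1} applied in $\P/G$ places $Ga$ in $\Q/G$, whence $a\in\Q$ by $G$-invariance. Otherwise \Cref{lemvee} provides $g\in G$ and $z\in a\vee_\P gy$; since $a\vee y=\emptyset$, necessarily $gy\neq y$, and translativity forces $gy\vee y=\emptyset$. If $a\leq gy$, then $g^{-1}a\leq y\in\Q$, and since $\Q$ is an order ideal in the natural setting of the lemma (equivalent, by the preceding lemma, to $\Q/G$ being an order ideal), I conclude $g^{-1}a\in\Q$ and hence $a\in\Q$ by $G$-invariance.

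If instead $a\not\leq gy$, then $\rk(z)=\rk(y)+1$, and I apply \ref{A} in $\P$ to $x=y$ and $y'=z$, using the basis $I=\{a\}\cup J$ of $z$ where $J\subseteq A(\P_{\leq gy})$ is a basis of $gy$ in the geometric lattice $\P_{\leq gy}$. Property \ref{A} yields $b\in I$ with $b\not\leq y$ and $b\vee y\neq\emptyset$; the case $b=a$ immediately contradicts the hypothesis $a\vee y=\emptyset$. The main subtlety, which I expect to be the key step, is ruling out the case $b\in J$. Here $b\leq gy$ implies $g^{-1}b\leq y$; if $g^{-1}b=b$ then $b\leq y$, contradicting $b\not\leq y$. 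Otherwise translativity applied to $g^{-1}$ forces $b\vee g^{-1}b=\emptyset$, yet any $w\in b\vee y$ satisfies $w\geq b$ and $w\geq y\geq g^{-1}b$ and hence exhibits a common upper bound of $b$ and $g^{-1}b$, a contradiction. Thus $b\in J$ is impossible, so $b=a$, which again contradicts $a\vee y=\emptyset$; this rules out the subcase $a\not\leq gy$ and completes the argument.
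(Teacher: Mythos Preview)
Your forward direction is correct and matches the paper's. For the converse, your overall strategy—reduce to showing $Ga\vee_{\P/G}Gy=\emptyset$, and otherwise derive a contradiction from \ref{A} and translativity—is the same as the paper's, and your argument in the subcase $a\not\leq gy$ is a correct, fully detailed version of what the paper compresses into the single line ``translativity and \ref{A} imply that $gy\vee_\P ga\neq\emptyset$.''

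The subcase $a\leq gy$ is where both your proof and the paper's hit the same snag. The paper simply asserts $y\lessdot p$ (equivalently $\rk(gy)<\rk(p)$), which fails precisely when $ga\leq y$; you instead dispatch this subcase by invoking that $\Q$ is an order ideal. That hypothesis is \emph{not} part of the lemma as stated, and it is genuinely needed: the converse is false for arbitrary $G$-invariant subposets. For a counterexample, let $\P$ be the intersection semilattice of the affine line arrangement in $\R^2$ consisting of the vertical lines $L_k=\{x=k\}$ (for $k\in\Z$) and the horizontal line $M=\{y=0\}$, with rank-two elements $p_k=L_k\cap M$; let $G=\Z$ act by horizontal integer translation. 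This action is translative and $\P$ satisfies \ref{A}. Take $\Q=\{\hat0\}\cup\{p_k:k\in\Z\}$, which is $G$-invariant but not an order ideal. Then $\P/G\cong B_2$ and $\Q/G=\{\hat0,\hat1\}$ satisfies \Cref{def:mod:1} vacuously, yet $\Q$ does not (take $y=p_0$ and $a=L_1$: then $a\vee y=\emptyset$ but $a\notin\Q$; note $a\leq gy=p_1$ with $g=1$, exactly the problematic subcase).

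So your instinct to add the order-ideal hypothesis is exactly right; it is what the application in \Cref{thm:quotient} actually uses, and with that hypothesis in place your proof is complete and more carefully case-split than the paper's.
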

\begin{proof} 
Let \Cref{def:mod:1} hold for $\Q$, and consider $Gy\in \Q/G$, $Ga\in A(\P/G)$ such that $Gy\vee_{\P/G} Ga = \emptyset$. Then $y\in Q$, $a\in A(\P)$ and, by \Cref{lemvee}, $y\vee_\P a=\emptyset$. By assumption then $a\in \Q$ and so $Ga\in \Q/G$. 

For the reverse implication, suppose that $\P$ satisfies \ref{A} and let
\Cref{def:mod:1} hold for $\Q/G$. Pick $y\in Q$, $a\in A(\P)$ such that
$y\vee_\P a=\emptyset$. Now if $Gp\in Gy\vee Ga$ then we can choose $p$ and
$g\in G$ such that $p\in y\vee ga$. 
Since the action preserves rank, $\rk(gy)<\rk(p)$ and $\rk(ga)=1$. Because $\P$ is locally geometric, then, $y\lessdot p$.
Now
translativity and \ref{A} imply that $gy\vee_\P ga\neq\emptyset$ contradicting the choice of $y$ and $a$. Therefore $Gy\vee Ga=\emptyset$ and by assumption $Ga\in \Q/G$, which means $a\in \Q$.
\end{proof}

\begin{lemma}\label{lem:quotient2}
Let $\P$ be a locally geometric poset with a translative action of a group $G$ and let $\Q$ be a $G$-invariant subposet of $\P$. Then, $\Q\subseteq \P$ satisfies \Cref{def:mod:2} if and only if $\Q/G\subseteq \P/G$ does.
\end{lemma}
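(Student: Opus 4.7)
My plan is to exploit the local isomorphism of \Cref{lociso}, which says that $f_z \st \P_{\leq z} \to (\P/G)_{\leq Gz}$ is a poset isomorphism for every $z \in \P$. Since modularity in a geometric lattice is an intrinsic property of the poset structure, $f_x$ will send modular elements of $\P_{\leq x}$ to modular elements of $(\P/G)_{\leq Gx}$ and vice versa. The proof will consist of checking that maximal elements match up on both sides (in $\P$ vs.\ $\P/G$, and in $\Q$ vs.\ $\Q/G$), and then transferring the chosen modular element across $f_x$.

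For the forward direction, I would start with $Gx \in \max(\P/G)$. Since $G$ acts by order-preserving automorphisms, $x \in \max(\P)$. By hypothesis there is $y \in \max(\Q)$ with $y \leq x$ and $y$ modular in $\P_{\leq x}$. Then $Gy \leq Gx$, and $Gy$ is modular in $(\P/G)_{\leq Gx}$ via the isomorphism $f_x$. To see $Gy \in \max(\Q/G)$, suppose $Gy < Gy'$ for some $y' \in \Q$; then $y \leq gy'$ for some $g \in G$, and since $y$ is maximal in $\Q$ and $gy' \in \Q$ (by $G$-invariance), we must have $y = gy'$, contradicting $Gy < Gy'$.

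For the reverse direction, I would start with $x \in \max(\P)$, so $Gx \in \max(\P/G)$. By hypothesis there is $Gy \in \max(\Q/G)$ with $Gy \leq Gx$ and $Gy$ modular in $(\P/G)_{\leq Gx}$. Picking $g \in G$ so that $gy \leq x$ and replacing $y$ by $gy$ (still in $\Q$ by $G$-invariance, still in the same orbit), I may assume $y \leq x$. The isomorphism $f_x$ then pulls back the modularity of $Gy$ to modularity of $y$ in $\P_{\leq x}$. The only subtle point is verifying $y \in \max(\Q)$: if $y < y'$ with $y' \in \Q$, then since $y \vee y' \ni y'$ is nonempty, translativity rules out $Gy = Gy'$ (which would force $y' = hy$ and then $y < hy$ with $y \vee hy \neq \emptyset$, contradicting translativity), so $Gy < Gy'$ in $\Q/G$, contradicting maximality of $Gy$.

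The main step to be careful about is confirming that modularity transfers cleanly across the local isomorphism $f_x$; this is essentially automatic because \Cref{def:modular} (or any of the equivalent forms in \Cref{rem:modular-equivalent}) is phrased purely in terms of the lattice operations of $\P_{\leq x}$, which $f_x$ preserves. The only other potentially delicate step is the $\max$-correspondence for $\Q$, which is where translativity of the action enters in an essential way.
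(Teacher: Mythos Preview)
Your proof is correct and follows essentially the same line as the paper's: both hinge on the local isomorphism $f_x$ of \Cref{lociso} to transfer modularity between $\P_{\leq x}$ and $(\P/G)_{\leq Gx}$, together with the observation that $x\in\max(\P)$ if and only if $Gx\in\max(\P/G)$. Your treatment of the maximality of $y$ in $\Q$ (resp.\ of $Gy$ in $\Q/G$) is more explicit than the paper's, which subsumes it into the local-isomorphism argument, but the underlying idea is the same.
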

\begin{proof} For every $Gx\in \max \P/G$, \Cref{lociso} shows that the quotient map defines a poset isomorphism between $\P_{\leq x}$ and $(\P/G)_{\leq Gx}$. In particular, given $y\in \P_{\leq x}$, every complement of some $Gy\in (\P/G)_{\leq Gx}$ is the orbit of a complement of $y$, and viceversa. Therefore $y$ is modular in $\P_{\leq x}$ if and only if $Gy$ is modular in $(\P/G)_{\leq Gx}$. Moreover, obviously $y\in \Q$ if and only if $Gy\in \Q/G$. Noting that the orbit of every $x\in \max\P$ is maximal in $\P/G$ proves the claim.
\end{proof}

\begin{theorem}\label{thm:quotient}
	Let $\P$ be a locally geometric poset with a translative action of a
group $G$ and let $\Q$ be a $G$-invariant subposet of $\P$. If $\Q$ is an
M-ideal in $\P$, then $\Q/G$ is an M-ideal in $\P/G$. Moreover, if $\P$
satisfies \ref{A}, the converse also holds.
\end{theorem}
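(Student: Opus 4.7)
The plan is to observe that being an M-ideal is a conjunction of four conditions, each of which we have already (or can easily) show descends to (and, under the geometricity hypothesis \ref{A}, lifts from) the quotient. Specifically, unpacking \Cref{def:modideal}, $\Q$ is an M-ideal in $\P$ iff $\Q$ is (i) an order ideal, (ii) pure, (iii) join-closed, and it satisfies (iv)~\Cref{def:mod:1} and (v)~\Cref{def:mod:2}. So I would simply check that each of (i)--(v) is preserved and, with the extra hypothesis, reflected by passing to the quotient. The bulk of the work has already been done in the preceding lemmas; the proof is essentially a bookkeeping exercise.

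First, I would note that \Cref{lem:geometriquot} ensures $\P/G$ is locally geometric, so that speaking about M-ideals in $\P/G$ even makes sense, and that it is geometric whenever $\P$ is. Next, the unnamed lemma immediately preceding \Cref{lem:quotient1} handles (i), (ii), (iii): $\Q$ is an order ideal (resp.\ pure, resp.\ join-closed, using translativity for this last point) if and only if $\Q/G$ is. For (iv), \Cref{lem:quotient1} gives one direction unconditionally and the reverse direction under the hypothesis \ref{A}. For (v), \Cref{lem:quotient2} gives the biconditional unconditionally.

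Putting this all together, if $\Q$ is an M-ideal in $\P$ then each of (i)--(v) holds for $\Q$, hence each holds for $\Q/G$ by the relevant lemma, so $\Q/G$ is an M-ideal in $\P/G$. Conversely, if $\P$ satisfies \ref{A} and $\Q/G$ is an M-ideal in $\P/G$, then again each of (i)--(v) holds for $\Q/G$ and hence, by the same lemmas, for $\Q$ (noting that only (iv) genuinely needs \ref{A} for the reverse implication).

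The only subtle point is the converse of (iv), which is already encapsulated in \Cref{lem:quotient1}: one must verify that an atom $a \in A(\P) \setminus A(\Q)$ with $a \vee y = \emptyset$ for some $y \in \Q$ lifts to the assertion $Ga \vee Gy = \emptyset$ in $\P/G$, and this is exactly where \ref{A} is used (via \Cref{lemvee} and translativity) to prevent some $g$-translate $gy$ from conspiring to create a nonempty join with $a$. Since this step is already done, the proof of \Cref{thm:quotient} itself contains no further obstacle beyond citing the right lemma for each of the conditions (i)--(v).
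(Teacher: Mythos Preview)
Your proposal is correct and follows essentially the same approach as the paper: the paper's proof simply says ``This is a combination of \Cref{lem:quotient1,lem:quotient2},'' which implicitly also relies on the preceding unnamed lemma handling (i)--(iii) and on \Cref{lem:geometriquot}. You have made all these citations explicit, which is arguably clearer, but the content is identical.
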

\begin{proof}
This is a combination of \Cref{lem:quotient1,lem:quotient2}.
\end{proof}

\begin{corollary}\label{cor:coverdown}
Let $\Gamma_1$ be a finitely-generated free abelian group, and let $\Gamma_2$ be a subgroup of $\Gamma_1$ of finite index. Let $\G$ be an abelian Lie group and let $\A\subseteq \Gamma_2$ be a finite subset. Call $\P_1$, $\P_2$ the posets of layers of the arrangements defined by $\A$ in  $\Hom(\Gamma_1,\G)$, resp.\ $\Hom(\Gamma_2,\G)$. 
Then $\P_1$ is supersolvable if and only if $\P_2$ is.
\end{corollary}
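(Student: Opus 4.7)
My plan is to combine \Cref{ex:IC}, which realizes $\P_2$ as the quotient $\P_1/G$ under the translative action of the finite group $G = \Hom(\Gamma_1/\Gamma_2, \G)$, with \Cref{thm:quotient}; since \Cref{cor:geoquot} ensures $\P_1$ is geometric, both directions of \Cref{thm:quotient} are available.

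For the backward implication, given a supersolvability chain $\{\zero\} = \Q'_0 \subset \cdots \subset \Q'_n = \P_2$, I would set $\Q_i := q^{-1}(\Q'_i)$ under the quotient map $q \colon \P_1 \to \P_2$. Each $\Q_i$ is $G$-invariant by construction and, using \Cref{lemvee} together with translativity, is a pure, join-closed order ideal of $\P_1$ with $\rk(\Q_i) = i$. Since $\Q_{i+1}$, as a join-closed order ideal of the geometric poset $\P_1$, inherits geometricity, applying the converse part of \Cref{thm:quotient} within $\Q_{i+1}$ shows that $\Q_i$ is an M-ideal of $\Q_{i+1}$, producing the desired chain in $\P_1$.

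The forward implication I would establish by induction on the rank $n$ of $\A$ (with $n = 1$ trivial). Given a supersolvability chain $\Q_0 \subset \cdots \subset \Q_n = \P_1$, I first argue that $\Q_{n-1}$ is the poset of layers of a sub-arrangement $\A_{Y_1}$ in $T_1$: by \Cref{def:mod:2} and essentiality there is a unique $Y_1 \in \max(\Q_{n-1})$ containing the identity of $T_1$, and the argument from the proof of \Cref{thm:ft=ss} yields $\Q_{n-1} = \P(\A_{Y_1})$ with $Y_1$ admissible, corresponding to a decomposition $\Gamma_1 = \Gamma_1' \oplus \Gamma_1''$ and $\A_{Y_1} = \A \cap \Gamma_1''$. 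Since $\A_{Y_1} \subseteq \A \subseteq \Gamma_2$, every $g \in G$ vanishes on $\A_{Y_1}$ and thus fixes each $H_\alpha$ setwise for $\alpha \in \A_{Y_1}$, so $\Q_{n-1}$ is $G$-invariant. \Cref{thm:quotient} then produces $\Q_{n-1}/G$ as a corank-one M-ideal of $\P_2$.

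To close the induction I would use the isomorphism $\P(\A_Y) \cong \P(\qA)$ to identify $\Q_{n-1}$ with the poset of layers of $\A_{Y_1}$ in $\Hom(\Gamma_1'', \G)$ and $\Q_{n-1}/G$ with its counterpart in $\Hom(\Gamma_2'', \G)$, where $\Gamma_2'' := \Gamma_2 \cap \Gamma_1''$ is finite-index in $\Gamma_1''$; it is a direct summand of $\Gamma_2$ because $\Gamma_2/\Gamma_2'' \hookrightarrow \Gamma_1/\Gamma_1'' \cong \Z$ is free, so the extension splits. The inductive hypothesis applied to $\A_{Y_1} \subseteq \Gamma_2'' \subseteq \Gamma_1''$ then transfers supersolvability of $\Q_{n-1}$ to $\Q_{n-1}/G$, and prepending $\Q_{n-1}/G \subset \P_2$ produces a supersolvability chain for $\P_2$. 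The most delicate step is ensuring $G$-invariance of the top M-ideal in $\P_1$, which rests on the identification $\Q_{n-1} = \P(\A_{Y_1})$ from the proof of \Cref{thm:ft=ss} combined with the standing hypothesis $\A \subseteq \Gamma_2$.
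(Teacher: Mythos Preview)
Your backward direction is essentially the paper's: take preimages of a chain in $\P_2$, note they are $G$-invariant, and invoke \Cref{thm:quotient} (with the geometricity of $\P_1$ from \Cref{cor:geoquot} enabling the converse).

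Your forward direction is correct but takes a longer route than the paper. The paper observes that $GH_\alpha=H_\alpha$ for every $\alpha\in\A$ (from \Cref{ex:IC}), so for every layer $Y$ the set $\max\P(\A_Y)$ is $G$-invariant; since each member $\Q_i$ of the supersolvability chain is of the form $\P(\A_{Y_i})$ (iterating the identification you extract from the proof of \Cref{thm:ft=ss}), \emph{every} $\Q_i$ is $G$-invariant simultaneously, and \Cref{thm:quotient} applied to each inclusion $\Q_{i-1}\subset\Q_i$ immediately yields a supersolvability chain for $\P_2=\P_1/G$. Your argument instead establishes $G$-invariance only for $\Q_{n-1}$, then recurses on rank by passing to the summands $\Gamma_1''$ and $\Gamma_2'':=\Gamma_2\cap\Gamma_1''$, which forces you to verify that $\Gamma_2''$ is a finite-index direct summand of $\Gamma_2$ and to identify $\Q_{n-1}/G$ with a poset of layers in $\Hom(\Gamma_2'',\G)$. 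All of this checks out, but none of it is needed: once you know each $\Q_i$ is $G$-invariant, the quotient chain is produced in one stroke. A minor point: you invoke essentiality to locate $Y_1$, but the corollary does not assume it; the argument from \Cref{thm:ft=ss} goes through using the maximal element of $\P_1$ through the identity in place of the identity itself.
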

\begin{proof} Both posets are geometric by \Cref{cor:geoquot}.
From \Cref{ex:IC} we know that $\P_2$ is the quotient of $\P_1$ by the  translative action of the group $G$ of deck transformations of the covering $\Hom(\Gamma_1,\G)\to \Hom(\Gamma_2,\G)$, and that $GH_\alpha=H_\alpha$ for every $\alpha\in A$. The last item implies that $\tr{Y}$ is $G$-invariant 
for every layer $Y\in \P_1$. In particular, if $\P_1$ is supersolvable, all members of the associated chain of M-ideals are $G$-invariant. On the other hand, if $\P_2$ is supersolvable, the preimage of every element of the associated chain of M-ideals is obviously $G$-invariant. Now the claimed equivalence follows by \Cref{thm:quotient}.
\end{proof}

\begin{corollary}\label{cor:coverup}
Recall the setup of \Cref{ex:UC}. For every abelian Lie group arrangement $\A$, the poset $\P(\A^\upharpoonright)$ is supersolvable if and only if $\P(\A)$ is.
\end{corollary}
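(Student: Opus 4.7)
The plan is to apply \Cref{thm:quotient} to the setup of \Cref{ex:UC}, where $G := \mathbb{Z}^{nd}$ acts translatively on the geometric semilattice $\P := \P(\A^\upharpoonright)$ with $\P/G \cong \P(\A)$. Since $\P$ is a geometric semilattice it is in particular a geometric poset (satisfying \ref{A}), so both hypotheses of \Cref{thm:quotient} are in force: $G$-invariant M-ideals of $\P$ correspond bijectively to M-ideals of $\P/G$.

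For the implication ``$\P(\A)$ supersolvable $\Rightarrow$ $\P(\A^\upharpoonright)$ supersolvable,'' I would pull back a supersolvable chain $\mathcal{R}_0 \subset \cdots \subset \mathcal{R}_n = \P(\A)$ along the quotient map $f : \P \to \P/G$. Setting $\Q_i := f^{-1}(\mathcal{R}_i)$, each $\Q_i$ is tautologically $G$-invariant, and \Cref{thm:quotient}---combined with the local isomorphism of \Cref{lociso} to transfer purity and rank---yields directly that each $\Q_i$ is an M-ideal of rank $i$ inside $\Q_{i+1}$. This is the straightforward half.

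For the reverse direction, I need to exhibit a $G$-invariant supersolvable chain in $\P$. My approach mirrors the template of \Cref{cor:coverdown}: the key claim is that $\tr{Y}$ is $G$-invariant for every layer $Y \in \P$, whence the join-closed order ideal $\P((\A^\upharpoonright)_Y)$ generated by it is also $G$-invariant. Indeed, by \Cref{rem:aay}, $\tr{Y}$ consists of cosets of $Y$ in $\tilde T$; since the deck group acts by translations, it permutes cosets of $Y$ amongst themselves, and because $G\tilde H_\alpha = \tilde H_\alpha$ as a set of components for every $\alpha \in \A$, such a coset which happens to be a layer of $\A^\upharpoonright$ is mapped to another layer. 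By induction on rank I would then build a $G$-invariant supersolvable chain in $\P$ whose corank-one members all have the form $\P((\A^\upharpoonright)_Y)$, and then descend it to $\P(\A)$ via \Cref{thm:quotient}.

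The main obstacle is to justify that every corank-one M-ideal arising in a supersolvable chain of $\P(\A^\upharpoonright)$ may be taken in the form $\P((\A^\upharpoonright)_Y)$ for some admissible subgroup $Y$ of the universal cover $\tilde T$---this is the combinatorial shadow of the fibration theorem (\Cref{thm:fib}, via \Cref{cor:Yislayer}) in the geometric-semilattice setting of real affine subspace arrangements. Once this identification is made, the $G$-invariance argument above goes through and the two implications fit together to establish the claimed equivalence.
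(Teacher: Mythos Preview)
Your proposal is correct and follows essentially the same approach as the paper's proof: apply \Cref{thm:quotient} once the relevant M-ideals are shown to be $G$-invariant, which in turn rests on the fact that $G$ permutes the lifts of each $H_\alpha$ and hence stabilizes $\tr{Y}$ for every layer $Y$. The paper is considerably more terse than you are---after noting that $G$ acts on the union of all lifts of each $H_\alpha$, it simply asserts ``$\tr{Y}$ is $G$-invariant for all layers $Y\in\P(\A^\upharpoonright)$; thus, all M-ideals of $\P(\A^\upharpoonright)$ are $G$-invariant,'' glossing over exactly the identification you flag as your main obstacle. Your proposed resolution is the intended one; the cleanest reference for it is not \Cref{thm:fib}/\Cref{cor:Yislayer} but the argument in the converse direction of the proof of \Cref{thm:ft=ss}, which shows (for any essential abelian Lie group arrangement, hence for the real arrangement $\A^\upharpoonright$) that a corank-one M-ideal $\Q_{n-1}$ in a supersolvable chain equals $\P(\A_Y)$ for the unique $Y\in\max(\Q_{n-1})$ through the identity.
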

\begin{proof}
\def\Hup{\widetilde{H}}
The proof is similar to that of \Cref{cor:coverdown}. We already know that both posets are geometric. For every $\alpha\in \A$ 
the group $G= \mathbb Z^{dn}$ of deck transformations acts on the union of all lifts of $H_\alpha$ to the universal cover of $T$.
Hence $\tr{Y}$ is $G$-invariant for all layers $Y\in\P(\A^\upharpoonright)$. Thus, all M-ideals of $\P(\A^\upharpoonright)$ are $G$-invariant. On the other hand, obviously the lift of every M-ideal of $\P(\A)$ is $G$-invariant. The claimed equivalence follows by \Cref{thm:quotient}.
\end{proof}

\begin{remark} \Cref{ex:UC} describes the lift of an abelian arrangement to the universal cover of its ambient space. \Cref{cor:coverup} can be generalized to other covering spaces in the following way. Let $\G_1$, $\G_2$ be two connected abelian Lie groups and suppose that $\G_2$ is a topological cover of $\G_1$. Then $\G_1$ is isomorphic to the quotient $\G_2/L$, where $L$ is a discrete subgroup of $\G_2$. Let $\Gamma$ be a finitely generated free Abelian group. 
Then $\Hom(\Gamma,-)$ is exact, hence $\Hom(\Gamma,\G_1)$ is the quotient of the topological group $\Hom(\Gamma,\G_2)$ by its discrete subgroup $\Hom(\Gamma,L)$. 
In particular, $\Hom(\Gamma,\G_2)$ is a topological covering of $\Hom(\Gamma,\G_1)$.
 
Let $\A\subseteq \Gamma$ be a finite subset. Call $\P_1$ the poset of layers of the arrangement defined by $\A$ in $\Hom(\Gamma,\G_1)$, and let $\P_2$ be the poset of layers of the lift of that arrangement to the cover  $\Hom(\Gamma,\G_2)$ of $\Hom(\Gamma,\G_1)$. Then, $\P_1$ is supersolvable if and only if $\P_2$ is. This follows from \Cref{cor:coverup} because $\G_1$ and $\G_2$ have the same universal cover, thus both arrangements lift to the same $\A^\upharpoonright$.
\end{remark}

\section{Strict supersolvability and configuration spaces}

\subsection{Definitions}
Several phenomena that are well-known about supersolvable geometric lattices in
fact do not hold in our general setting. The difficulty lies in \Cref{def:mod:1},
which may be rephrased as requiring  $|a\vee y|\geq 1$ for any $y\in\Q$ and $a\in
A(\P)\setminus A(\Q)$. If instead we require such $a$ and $y$ to have a
\textit{unique} minimal upper bound, these phenomena do indeed appear.
In other words, we need a stronger notion than an M-ideal, which we define next.

\begin{definition}\label{def:TM}
A \defh{TM-ideal} of a locally geometric poset $\P$ is a pure,
join-closed, order ideal $\Q \subseteq\P$ such that:
\begin{deflist}
\item\label[definition]{def:TM:1}
$|a\vee y|=1$ whenever $y\in \Q $ and $a\in A(\P)\setminus A(\Q )$. 
\item\label[definition]{def:TM:2}
for every $x\in\max(\P)$, there is some $y\in\max(\Q )$ such that $y$ is a modular
element in the geometric lattice $\P_{\leq x}$.
\end{deflist}
\end{definition}

\begin{example}
$\{\zero\}$ and $\P$ are always TM-ideals of $\P$.
\end{example}

\begin{remark}\label{rem:TM}
When $\P$ is the poset of layers for an abelian arrangement $\A$ in $T$, \Cref{def:TM} has the following topological interpretation.
Let $Y$ be an admissible subgroup of $T$ for which $\P(\A_Y)$ is an M-ideal of $\P(\A)$. The following statements are equivalent.
\begin{enumerate}
\item \label{thm:monodromy:TM}
$\P(\A_Y)$ is a TM-ideal of $\P(\A)$ with $\rk(\P(\A_Y))=\rk(\P(\A))-1$.
\item \label{thm:monodromy:int}
For every $Y'\in\tr{Y}$ and $H\in A(\A\setminus\A_Y)$, the intersection $Y'\cap H$ is
connected.
\end{enumerate}
\end{remark}

\begin{definition}\label{def:sss}
A locally geometric poset $\P$ is \defh{strictly supersolvable} if there is a
chain \[
\zero = \Q_0 \subset \Q_1 \subset \cdots \subset \Q_n = \P\]
 where each $\Q_i$ is a TM-ideal of $\Q_{i+1}$ with $\rk(\Q_i)=i$.
\end{definition}

\begin{example}
Any rank-one locally geometric poset is strictly supersolvable.
\end{example}

\begin{example}\label{ex:MnotTM}
The poset $\P$ from \Cref{ex:ss:mod} (see \Cref{fig:ex:ss:mod}) is not strictly
supersolvable. Indeed, its only proper M-ideals are $\P_{\leq 1}$ and $\P_{\leq
3}$, and the fact that $|1\vee3|=2$ means neither is a TM-ideal.
\end{example}

\begin{example}\label{ex:sss}
Let $\Gamma=\mathbb Z^2$ and $\A=\{\alpha_1=(1,0),\alpha_2=(0,2),\alpha_3=(1,2)\}$. Let $\G=S^1\times \mathbb R^v$ and
consider an arrangement in $T$.
If $v=0$ or $v=1$, we may identify
$T$ with $(S^1)^2$ or $(\C^\times)^2$, respectively.
\Cref{fig:loctriv} depicts the arrangement in $(S^1)^2$ and \Cref{fig:ex:sss} depicts the Hasse diagram for
its poset of layers.
This poset has a proper TM-ideal, whose maximal elements are the connected components of $H_2$, which is shown in the left of \Cref{fig:ex:sss}. 
Thus, $\P(\A)$ is strictly supersolvable.

Note, however, that the fiber bundle depicted in \Cref{fig:loctriv} corresponds to the M-ideal $\P(\A)_{\leq H_{\alpha_1}}$ in the right of \Cref{fig:ex:sss}, which is not a TM-ideal since $H_{\alpha_1}\cap H_{\alpha_3}$ is disconnected.
\begin{figure}[ht]
\begin{tikzpicture}
\solidnodes
\node[red] (T) at (0,0) {};
\node (1) at (-1.5,1.5) {};
\node[red] (21) at (-.5,1.5) {};
\node[red] (22) at (.5,1.5) {};
\node (3) at (1.5,1.5) {};
\node (11) at (-1,3) {};
\node (-11) at (1,3) {};
\draw[-] (T)--(1);
\draw[red,-,ultra thick] (T)--(21);
\draw[red,-,ultra thick] (T)--(22);
\draw[-] (T)--(3);
\draw[-] (1)--(11);
\draw[-] (1)--(-11);
\draw[-] (21)--(11);
\draw[-] (22)--(-11);
\draw[-] (3)--(11);
\draw[-] (3)--(-11);
\end{tikzpicture}
\hspace{1in}
\begin{tikzpicture}
\solidnodes
\node[blue] (T) at (0,0) {};
\node (1) at (-1.5,1.5) {};
\node (21) at (-.5,1.5) {};
\node (22) at (.5,1.5) {};
\node[blue] (3) at (1.5,1.5) {};
\node (11) at (-1,3) {};
\node (-11) at (1,3) {};
\draw[-] (T)--(1);
\draw[-] (T)--(21);
\draw[-] (T)--(22);
\draw[blue,-,ultra thick] (T)--(3);
\draw[-] (1)--(11);
\draw[-] (1)--(-11);
\draw[-] (21)--(11);
\draw[-] (22)--(-11);
\draw[-] (3)--(11);
\draw[-] (3)--(-11);
\end{tikzpicture}
\caption{The Hasse diagram for the poset of layers of the arrangement $\A$ from Example \ref{ex:sss}. The elements of a TM-ideal colored in \textcolor{red}{red} (left); the elements of an M-ideal that is not a TM-ideal are colored in \textcolor{blue}{blue} (right).}
\label{fig:ex:sss}
\end{figure}
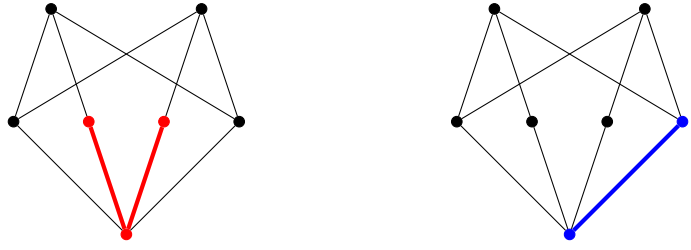
\end{example}

\begin{example}
Dowling posets are strictly supersolvable; see the proof of \Cref{prop:dowling}.
\end{example}

\begin{proposition}
Let $\P$ be a geometric semilattice. Then $\Q$ is an M-ideal of $\P$ if and only
if $\Q$ is a TM-ideal of $\P$. Consequently, $\P$ is supersolvable if and only if it
is strictly supersolvable.
\end{proposition}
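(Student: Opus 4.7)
The plan is to exploit the fact that in any meet-semilattice, a nonempty set of minimal upper bounds of two elements must be a singleton. Granting this, conditions \Cref{def:mod:1} and \Cref{def:TM:1} become equivalent, while conditions \Cref{def:mod:2} and \Cref{def:TM:2} are already identical, giving the equivalence of the two notions of ideal. The supersolvability equivalence then follows immediately from the definitions \Cref{def:ss} and \Cref{def:sss}, since they refer to the same chains.

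First I would establish the key observation: in the chain-finite meet-semilattice $\P$, if $u_1,u_2\in a\vee y$ for some $a,y\in\P$, then the meet $u_1\wedge u_2$ exists in $\P$ and is itself a common upper bound of $a$ and $y$ (because $a\leq u_i$ for $i=1,2$ means $a$ is a common lower bound of $u_1,u_2$, hence $a\leq u_1\wedge u_2$, and similarly for $y$). Minimality of $u_1$ among common upper bounds, together with $u_1\wedge u_2\leq u_1$, forces $u_1=u_1\wedge u_2$, and by symmetry $u_2=u_1\wedge u_2$. Thus $|a\vee y|\leq 1$ whenever $a\vee y$ is nonempty.

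Next I would check both directions of the ideal equivalence. For the M-ideal $\Rightarrow$ TM-ideal direction, given $y\in\Q$ and $a\in A(\P)\setminus A(\Q)$, I observe that $a\notin\Q$ (since an atom of $\P$ lying in the order ideal $\Q$ is necessarily an atom of $\Q$), so the contrapositive of \Cref{def:mod:1} gives $a\vee y\neq\emptyset$, and the observation above upgrades this to $|a\vee y|=1$, verifying \Cref{def:TM:1}. For the TM-ideal $\Rightarrow$ M-ideal direction, given $y\in\Q$ and $a\in A(\P)$ with $a\vee y=\emptyset$, condition \Cref{def:TM:1} cannot hold for $a$, so necessarily $a\in A(\Q)\subseteq\Q$, verifying \Cref{def:mod:1}.

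Finally, since the two chain conditions \Cref{def:ss} and \Cref{def:sss} differ only in requiring M-ideals versus TM-ideals at each step, and these notions coincide in a geometric semilattice by the above, $\P$ is supersolvable if and only if it is strictly supersolvable. I do not anticipate a serious obstacle; the only point requiring care is the justification that the meet in $\P$ of two common upper bounds remains a common upper bound, which is the standard property of a meet-semilattice and requires chain-finiteness only to guarantee existence of the meet.
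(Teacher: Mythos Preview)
Your proposal is correct and follows essentially the same approach as the paper's proof: both rest on the observation that in a meet-semilattice any nonempty $a\vee y$ is a singleton, so \Cref{def:mod:1} and \Cref{def:TM:1} become equivalent. You simply fill in more detail (both directions of the ideal equivalence, and the passage to chains) than the paper's one-line argument; the only minor slip is the remark that chain-finiteness is needed for the meet to exist---in a meet-semilattice meets exist by definition---but this does not affect the argument.
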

\begin{proof}
In a geometric semilattice, if two elements have an upper bound then they have a
unique minimal upper bound. This means that \Cref{def:mod:1} implies
\Cref{def:TM:1} for geometric semilattices, hence every M-ideal is a TM-ideal.
\end{proof}

\subsection{Characteristic polynomial}
The {\em characteristic polynomial} of any bounded-below poset $\P$ with a rank
function $\rk$ is defined as
\[ \chi_{\P}(t):=\sum_{x\in \P} \mu_{\P}(x)t^{\rk(\P)-\rk(x)}, \]
where $\mu_{\P}$ is iteratively defined by $\mu_{\P}(\zero)=1$ and for any
$x\in\P\setminus\zero$ one has $\sum_{y\in\P_{\leq x}} \mu_{\P}(y) = 0$.
A feature of supersolvable geometric lattices is that their characteristic
polynomial decomposes into linear factors over $\mathbb{Z}$. We show that this
is true also for \textit{strictly} supersolvable posets.

\begin{theorem}\label{thm:charpoly}
Let $\Q $ be a TM-ideal of a locally geometric poset $\P$ with
$\rk(\Q )=\rk(\P)-1$, and let $a=|A(\P)\setminus A(\Q )|$. Then
\[\chi_\P(t) = \chi_\Q (t)\cdot(t-a).\]
In particular, if $\P$ is strictly supersolvable via the chain of TM-ideals $\zero=\Q_0\subset \Q_1\subset\cdots\subset Q_n=\P$, and $a_i=|A(\Q_i)\setminus A(\Q_{i-1})|$ for each $i$, then 
\[\chi_\P(t) = \prod_{i=1}^n (t-a_i).\]
\end{theorem}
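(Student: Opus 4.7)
The plan is to prove the base factorization $\chi_\P(t) = \chi_\Q(t)(t-a)$ by reducing it to an atom-level counting identity, then obtain the product formula by induction along the chain of TM-ideals.

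First I would exploit that $\Q$ is an order ideal of $\P$, so that $\P_{\leq y} = \Q_{\leq y}$ and hence $\mu_\P(y)=\mu_\Q(y)$ for every $y\in \Q$. Expanding $\chi_\Q(t)(t-a)$ and comparing rank by rank with $\chi_\P(t)=\sum_{x\in\P}\mu_\P(x)\,t^{\rk(\P)-\rk(x)}$, the desired identity becomes
\[
\sum_{x\in \P\setminus \Q} \mu_\P(x)\, t^{\rk(\P)-\rk(x)} \;=\; -a\sum_{y\in \Q} \mu_\P(y)\, t^{\rk(\P)-\rk(y)-1}.
\]
For each $x\in \P\setminus \Q$, the join-closed order ideal $\Q\cap \P_{\leq x}$ is principal inside the geometric lattice $\P_{\leq x}$, hence of the form $(\P_{\leq x})_{\leq y(x)}$ for a unique $y(x)\in \Q$. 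By \Cref{prop:localmod}, $y(x)$ is a modular element of corank one in $\P_{\leq x}$; in particular $x\gtrdot y(x)$, which takes care of the rank shift in the two sides. Applying Stanley's modular factorization inside $\P_{\leq x}$ and evaluating at $t=0$ yields
\[
\mu_\P(x) \;=\; -a_x\cdot\mu_\P(y(x)), \qquad a_x := |\{b\in A(\P)\setminus A(\Q) : b\leq x\}|,
\]
where the identification of $a_x$ uses that every atom of $\P_{\leq y(x)}$ lies in $A(\Q)$.

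Substituting and regrouping the sum by $y(x)=y$, the target reduces to a purely combinatorial claim:
\[
\sum_{\substack{x\in \P\setminus \Q \\ y(x)=y}} a_x \;=\; a\qquad\text{for every } y\in \Q.
\]
Swapping the order of summation, the left-hand side counts pairs $(x,b)$ with $b\in A(\P)\setminus A(\Q)$, $b\leq x$, and $y(x)=y$. Here the TM-ideal hypothesis is decisive: \Cref{def:TM:1} guarantees that for each such $b$ and each $y\in \Q$ the set $b\vee y$ consists of a single element $x_b$. I would then verify that $x_b\in \P\setminus \Q$ (because $b\leq x_b$ and $b\notin \Q$), that $x_b\gtrdot y$ (computing $b\vee y$ in the geometric lattice $\P_{\leq x_b}$ yields a cover of $y$), and that $y(x_b)=y$ (otherwise $y(x_b)=x_b$, contradicting $x_b\notin \Q$). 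Conversely, any $x$ with $b\leq x$ and $y(x)=y$ satisfies $x_b\leq x$, and inside $\P_{\leq x}$ the element $x_b$ covers $y$; since $\rk(x)=\rk(y(x))+1=\rk(x_b)$, this forces $x=x_b$. Thus each atom $b\in A(\P)\setminus A(\Q)$ contributes exactly one pair, giving the count $a$.

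The ``in particular'' statement is then immediate by induction on $n$: the pair $\Q_{n-1}\subset \Q_n=\P$ satisfies the hypothesis of the first half, giving $\chi_\P(t)=\chi_{\Q_{n-1}}(t)(t-a_n)$, while $\Q_{n-1}$ inherits strict supersolvability from the truncated chain. The main obstacle is the bijection step: under the weaker M-ideal hypothesis the join $b\vee y$ may consist of several elements (cf.\ \Cref{ex:MnotTM}), so a single atom would contribute to several $x$'s above $y$ and the counting identity $\sum a_x=a$ would fail. It is precisely the upgrade from $|b\vee y|\geq 1$ to $|b\vee y|=1$ in \Cref{def:TM:1} that forces the polynomial to split.
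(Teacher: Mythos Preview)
Your argument is correct and follows essentially the same route as the paper: both split $\chi_\P$ into the contribution from $\Q$ and from $\P\setminus\Q$, use \Cref{prop:localmod} to assign to each $x\in\P\setminus\Q$ the unique $y(x)\in\Q$ it covers, apply Stanley's modular factorization in $\P_{\leq x}$ to get $\mu_\P(x)=-a_x\,\mu_\Q(y(x))$, and then invoke \Cref{def:TM:1} to show that the sets $A_x$ for $x$ covering a fixed $y$ partition $A(\P)\setminus A(\Q)$. Your bijection $(b,y)\mapsto x_b$ is exactly this partition statement unpacked, and the induction for the product formula is identical.
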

\begin{proof}
The second claim, for strictly supersolvable posets, follows from the first by induction. Thus, we need only consider a TM-ideal $\Q$ in a locally geometric poset $\P$.

Let $x\in\P\setminus\Q$. By \Cref{prop:localmod} and \Cref{rem:uniquey}, there is a unique $y\in\Q$ such that $x$ covers $y$, and this $y$ is a modular element of $\P_{\leq x}$. Thus, $\chi_{\P_{\leq x}}(t)=\chi_{\Q_{\leq y}}(t)(t-|A_x|)$ by \cite[Theorem 2]{stanley}, where $A_x=A(\P_{\leq x})\setminus A(\Q)$. Extracting the constant term of each side of this equation yields $\mu_\P(x)=-\mu_\Q(y)|A_x|$.
Moreover, for $y\in\Q$, the sets $\{A_x \st x\in\P\setminus\Q, y\lessdot x\}$ partition $A(\P)\setminus A(\Q)$ by \Cref{def:TM:1}. This means that for each $y\in\Q$,  $\displaystyle\sum_{\substack{x\in\P\setminus\Q\\ y\lessdot x}} |A_x|=a$.

Therefore,
\begin{align*}
\chi_{\P}(t)
&=\sum_{x\in\P} \mu_{\P}(x) t^{\rk(\P)-\rk(x)}\\
&= \sum_{y\in \Q } \mu_\Q (y)t^{\rk(\Q )+1-\rk(y)} + 
\sum_{y\in \Q } \sum_{\substack{x\in\P\setminus \Q \\y\lessdot x}} \mu_{\P}(x)
t^{\rk(\P)-\rk(x)}\\
&= \sum_{y\in \Q } \mu_\Q (y)t^{\rk(\Q )+1-\rk(y)} + 
\sum_{y\in \Q } \sum_{\substack{x\in\P\setminus \Q \\y\lessdot x}} -\mu_{\Q}(y)|A_x|
t^{\rk(\P)-\rk(x)}\\
&= \sum_{y\in \Q } \mu_\Q (y)t^{\rk(\Q )+1-\rk(y)} + 
\sum_{y\in \Q } -a\mu_\Q (y) t^{\rk(\Q )-\rk(y)}\\
&= \sum_{y\in \Q } \mu_\Q (y) t^{\rk(\Q )-\rk(y)} (t-a) 
= \chi_\Q (t)\cdot(t-a).
\end{align*}
\end{proof}

\begin{remark}
The complete factorization in the case of strictly supersolvable posets
could be proved directly using \cite[Theorem 4.4]{Hallam} and the partition of $A(\P)$ whose blocks are the sets $A(\Q_i)\setminus A(\Q_{i-1})$.
This was carried out in the example of Dowling posets in \cite[Theorem B]{BG} and is straightforward to generalize to our setting.
\end{remark}

\begin{remark}
The assumption that $\Q $ is a TM-ideal in \Cref{thm:charpoly} is
necessary, as demonstrated in the following examples. Accordingly, a poset being
supersolvable is not enough for its characteristic polynomial to factor
completely over $\Z$.
\end{remark}

\begin{example}\label{ex:charpoly}
Consider the poset $\P$ depicted in \Cref{fig:ex:sss} (see also \Cref{ex:sss}).
Its characteristic polynomial is
\[\chi_{\P}(t) = t^2-4t+4 = (t-2)(t-2).\]
This agrees with the fact that the TM-ideal $\Q$ colored in red in \Cref{fig:ex:sss} has $\chi_\Q(t)=t-2$ and $|A(\P)\setminus A(\Q)|=2$.
On the other hand, the M-ideal $\Q'$ colored in blue in \Cref{fig:ex:sss} is not a TM-ideal,
and we see that $\chi_{\Q '}(t)=t-1$ does not divide $\chi_{\P}(t)$.
\end{example}

\begin{example}
Consider again the poset $\P$ in \Cref{fig:ex:ss:mod}, see also 
\Cref{ex:ss:mod}. It is supersolvable, with $\{\zero,1\}$ and $\{\zero,3\}$
both M-ideals. However, it is not strictly supersolvable and its
characteristic polynomial $\chi_{\P}(t) = t^2-3t+3$ does not factor over the
integers. 
\end{example}

\begin{corollary}\label{cor:poincare}
Let $\G\cong(S^1)^d\times\R^v$ with $v>0$, let $\Gamma$ be a free abelian group with
$\rk\Gamma=n$, let $\A$ be an essential arrangement in $T=\Hom(\Gamma,\G)$ whose
poset of layers is strictly supersolvable via the chain of TM-ideals
$\zero\subset \Q_1\subset\cdots\subset \Q_n=\P(\A)$, and let $a_i=|A(\Q_i)\setminus
A(\Q_{i-1})|$ for each $i$. 

Then the Poincar\'e polynomial of $M(\A)$ factors as:
\[\sum_{j\geq0} \rk H_j(M(\A);\Z) t^j = \prod_{i=1}^n \left( (1+t)^d + a_i
t^{d+v-1}\right).\]
\end{corollary}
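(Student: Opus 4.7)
The plan is to reduce the claim to a substitution in the characteristic polynomial, then apply the factorization of $\chi_{\P(\A)}$ already established in \Cref{thm:charpoly}.

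First, I would invoke \Cref{thm:charpoly}: since $\P(\A)$ is strictly supersolvable via $\zero\subset\Q_1\subset\cdots\subset\Q_n=\P(\A)$ with $a_i = |A(\Q_i)\setminus A(\Q_{i-1})|$, we have
\[\chi_{\P(\A)}(t) = \prod_{i=1}^n (t-a_i).\]

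Next, I would recall the formula of Orlik--Solomon \cite{OS} (in the case $\G=\C$) and Liu--Tran--Yoshinaga \cite{LTY} (in the general noncompact abelian case, i.e.\ $v>0$), expressing the Poincar\'e polynomial of $M(\A)$ as a substitution of the characteristic polynomial of the poset of layers. Concretely, the formula takes the shape
\[\sum_{j\geq 0}\rk H_j(M(\A);\Z)\, t^j = (-t^{d+v-1})^n\,\chi_{\P(\A)}\!\left(-\frac{(1+t)^d}{t^{d+v-1}}\right),\]
which specializes to the classical Orlik--Solomon formula when $(d,v)=(0,2)$ and matches the known expression for toric complements when $(d,v)=(1,1)$. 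The noncompactness hypothesis $v>0$ is essential here; compact factors (such as in the elliptic case) contribute classes that obstruct a clean substitution.

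Substituting the factorization of $\chi_{\P(\A)}(t)$ into this formula and distributing the factor $(-t^{d+v-1})^n = \prod_{i=1}^n (-t^{d+v-1})$ across the $n$ linear factors gives
\[\prod_{i=1}^n \left(-t^{d+v-1}\right)\left(-\frac{(1+t)^d}{t^{d+v-1}}-a_i\right) = \prod_{i=1}^n \left((1+t)^d + a_i t^{d+v-1}\right),\]
which is the asserted product. The only nontrivial ingredient is the LTY formula; everything else is a direct algebraic manipulation. I expect no substantive obstacle beyond verifying that the conventions for the substitution agree with those of \cite{LTY,OS}, and that the rank $n$ of $\Gamma$ matches the rank $\rk(\P(\A))$ under the essentiality assumption, which is immediate from \Cref{def:alga}.
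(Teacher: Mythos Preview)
Your proposal is correct and follows essentially the same approach as the paper: invoke the Liu--Tran--Yoshinaga substitution formula for the Poincar\'e polynomial in terms of $\chi_{\P(\A)}$, then plug in the factorization $\chi_{\P(\A)}(t)=\prod_i(t-a_i)$ from \Cref{thm:charpoly} and simplify each factor. The paper's proof is the same two-line argument, citing \cite[Theorem 7.7]{LTY} for the substitution and \Cref{thm:charpoly} for the factorization.
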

\begin{proof}
By \cite[Theorem 7.8]{LTY}, the Poincar\'e polynomial of $M(\A)$ is given by
\[ \sum_{j\geq0} \rk H_j(M(\A);\Z) t^j = (-t^{d+v-1})^n \chi_{\P(\A)}\left(
-\frac{(1+t)^d}{t^{d+v-1}}\right).\]
The result then follows from the formula of \Cref{thm:charpoly}.
\end{proof}

\subsection{Topological fibrations}
Recall from \Cref{ex:graphic} the configuration space $\Conf_n(\G)$, viewed as the complement of an abelian arrangement.
Cohen \cite[Theorem 1.1.5]{cohen} observed that fiber bundles of hyperplane arrangement complements can be pulled back from the classical Fadell--Neuwirth bundles of configuration spaces \cite{FN} defined by dropping the last point in a configuration.
We see the same phenomenon when our arrangement bundle corresponds to a TM-ideal, but an M-ideal is not sufficient. 

As in \S\ref{sec:fib}, we assume throughout that $\G$ is a connected abelian Lie group, $T=\Hom(\Gamma,\G)$ for some finite-rank free abelian group $\Gamma$, $Y$ is an admissible subgroup of $T$, and $\A$ is an essential arrangement in $T$.

\begin{theorem}\label{thm:FN}
Suppose that $\P(\A_Y)$ is a TM-ideal of $\P(\A)$. Then 
there exists a map $g:M(\qA)\to\Conf_\ell(\G)$ such that $p$ is the
pullback of the Fadell--Neuwirth bundle $\Conf_{\ell+1}(\G)\to\Conf_\ell(\G)$
along the map $g$.
\end{theorem}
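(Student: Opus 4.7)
The plan is to identify, for each atom $H\in A(\A\setminus\A_Y)$, a continuous section $s_H\colon M(\qA)\to\G$ tracking the unique puncture that $H$ contributes to each fiber of $p$. The classifying map $g$ will be built by bundling these sections, and $M(\A)$ will then match the pullback on the nose.

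The main step is constructing the $s_H$'s. By \Cref{cor:cover}, for each $\alpha\in\A\setminus\A_Y$ the projection $p$ restricts to a covering map $H_\alpha\to M(\qA)$ of degree $c(\alpha)^d$ (using \Cref{lem:calpha}). The TM-ideal hypothesis, via \Cref{rem:TM}, forces every $Y'\in\tr{Y}$ to meet each atom $H\in A(\A\setminus\A_Y)$ in at most one point; since the $c(\alpha)^d$ preimage points in any fiber of $p|_{H_\alpha}$ are distributed among the connected components of $H_\alpha$ with at most one per component, each component $H$ must cover $M(\qA)$ with degree exactly one, so $p|_H$ is a homeomorphism. I would then fix the splitting $T\cong T/Y\times Y$ coming from $\Gamma=\Gamma'\oplus(\Gamma/\Gamma')$, identify $Y\cong\G$ via $\phi\mapsto\phi(\beta_0)$, and define $s_H$ as the composition of $(p|_H)^{-1}$ with the projection $T\to Y\cong\G$.

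Enumerating $A(\A\setminus\A_Y)=\{H_1,\dots,H_\ell\}$, I set $g(u):=(s_{H_1}(u),\dots,s_{H_\ell}(u))$; this lands in $\Conf_\ell(\G)$ by \Cref{lem:tgood}.\eqref{lem:tgood:Int} together with the observation that distinct atoms lying on the same $H_\alpha$ are disjoint. The pullback property is then checked via $\Phi\colon M(\A)\to M(\qA)\times_{\Conf_\ell(\G)}\Conf_{\ell+1}(\G)$, $(u,y)\mapsto(u,(y,s_{H_1}(u),\dots,s_{H_\ell}(u)))$: in the chosen splitting a point $(u,y)\in T$ with $u\in M(\qA)$ lies in some $H_i$ precisely when $y=s_{H_i}(u)$, so the defining condition of $M(\A)$ reduces to $y\notin\{s_{H_i}(u)\}_i$, matching exactly the fiber description of the pullback. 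The evident forgetful map provides the continuous inverse, using the local triviality from \Cref{thm:fib}.

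The main obstacle is the degree-one conclusion for each connected component $H$ of $H_\alpha$; this is exactly what the TM-ideal hypothesis buys over the weaker M-ideal condition. For a general M-ideal, the monodromy of the cover $p|_{H_\alpha}\to M(\qA)$ can permute sheets nontrivially, obstructing the existence of a global section $s_H$, which is why an M-ideal suffices for a fibration but not for a Fadell--Neuwirth pullback.
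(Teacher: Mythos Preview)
Your proposal is correct and follows essentially the same route as the paper: both invert $p$ on each connected component $H\in A(\A\setminus\A_Y)$ using the TM-condition to obtain degree one, bundle these inverses into $g$, and then verify the pullback (you via an explicit inverse homeomorphism, the paper via the universal property of the square). One small remark: the paper works with the covering $H_{\alpha}\to T/Y$ coming from \Cref{lem:calpha} rather than the covering over $M(\qA)$ from \Cref{cor:cover}, so that the degree can be read off at the fiber $Y\in\tr{Y}$ where \Cref{rem:TM} applies; your phrasing mixes fibers over $M(\qA)$ with fibers $Y'\in\tr{Y}$, which actually lie over the arrangement rather than its complement, so you should either extend the covering to all of $T/Y$ or invoke constancy of the degree more explicitly.
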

\begin{proof}
Write $\A\setminus\A_Y=\{\alpha_1,\dots,\alpha_s\}$, and for each $1\leq i\leq s$ write $H_{\alpha_i}=\sqcup_{j=1}^{c_i} H_{\alpha_i,j}$, where $c_i$ is the number of connected components of $H_{\alpha_i}$. 
By \Cref{def:TM:1}, the covering map in \Cref{cor:cover} restricts to a homeomorphism $p_{i,j}:=p|_{H_{\alpha_i,j}}:H_{\alpha_i,j}\to T/Y$ for each $1\leq i\leq s$ and $1\leq j\leq c_i$. 
 We can choose coordinates (e.g., as in the proof of \Cref{lem:calpha}) such that $p_{i,j}$ is the projection along the last coordinate, sending $(t,r)\mapsto t$. Since $p_{i,j}$ is a homeomorphism, there are continuous functions $r_{i,j} : T/Y\to \G$ such that $p_{i,j}^{-1}:T/Y\to H_{\alpha_i,j}$ sends $t\mapsto (t,r_{i,j}(t)) $. 
 Then the product of the $r_{i,j}$ defines a continuous map $\hat{g}:T/Y\to\G^\ell$,  
 where $\ell=\sum_i c_i$.
\Cref{lem:calpha} guarantees that for any $t\in M(\qA)$, the points $r_{i,j}(t)$ are all distinct. Therefore, $\hat{g}$ restricts to a continuous map $g:M(\qA)\to \Conf_\ell(\G)$.

Notice that the two fiber bundles form a commutative square as depicted in \Cref{fig:FN}, where the map $h:M(\A)\to \Conf_{\ell+1}(\G)$ is defined, for $(t,z)\in M(\A)\subseteq T\cong  (T/Y)\times \G$, by $(t,z)\mapsto (g(t),z)$. We now show that this square satisfies the universal property of a pullback.
 
\begin{figure}[ht]
\begin{tikzcd}
X \arrow[bend left=10]{rrd}{f_2}\arrow[dashed]{rd}{f}\arrow[bend right=10, swap]{ddr}{f_1} &&\\
& M(\A) \arrow{r}{h}\arrow{d}{p}& \Conf_{\ell+1}(\G)\arrow{d}{\pi} \\
& M(\qA) \arrow{r}{g} & \Conf_\ell(\G) 
\end{tikzcd}
\caption{Pullback diagram in the proof of \Cref{thm:FN}}
\label{fig:FN}
\end{figure}
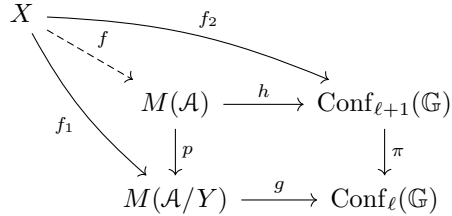
Let $X$ be a topological space and $f_1,f_2$ two continuous functions such that $g\circ f_1=\pi\circ f_2$. The map
$$
f: X\to M(\A),\quad\quad x\mapsto (f_1(x),(f_2(x))_{\ell+1}) 
$$
is well-defined because $f_1(x)\in M(\qA)$ and $(f_2(x))_{\ell+1}$ does not coincide with any of  the coordinates of $g(f_1(x))=(\pi\circ f_2)(x)=(f_2(x)_1,\ldots f_2(x)_{\ell})$, i.e.\ the punctures of the fiber $p^{-1}(t)$. It is continuous because the $f_i$ are, and commutes with $p$ and $h$, since by definition $p\circ f = f_1$ and $h\circ f = (g\circ f_1, (f_2)_{\ell+1}) = (\pi\circ f_2, (f_2)_{\ell+1}) = f_2$.
\end{proof}

\begin{remark}\label{rem:monodromy}
The idea to have in mind for the map $g$ of \Cref{thm:FN} is that it should pick
out the punctures in the fiber over a point of $M(\qA)$.
If $\P(\A_Y)$ is an M-ideal but not a TM-ideal, then there is some $H\in
A(\A\setminus\A_Y)$ such that $H\cap Y$ is not connected. The nontrivial monodromy of the covering map $H\to T/Y$ then obstructs the existence of the desired map $r$ within the proof.
We will see below (\Cref{thm:monodromy}) that trivial monodromy implies that the M-ideal $\P(\A_Y)$ is in fact a TM-ideal.
\end{remark}

\begin{corollary}\label{cor:section}
If $\P(\A_Y)$ is a TM-ideal of $\P(\A)$, then the fiber bundle $M(\A)\to M(\qA)$ admits a section.
\end{corollary}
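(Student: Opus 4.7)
The plan is to leverage \Cref{thm:FN}, which realizes $p\colon M(\A)\to M(\qA)$ as the pullback of the Fadell--Neuwirth fibration $\pi\colon \Conf_{\ell+1}(\G)\to\Conf_\ell(\G)$ along the map $g$ constructed in its proof. By the universal property of the pullback, any global section $\sigma$ of $\pi$ yields a section $u\mapsto(u,\sigma(g(u)))$ of $p$. It therefore suffices to produce a section of the Fadell--Neuwirth fibration for an arbitrary connected abelian Lie group $\G$.

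To construct such a section, we exploit the Lie group structure of $\G$. The left-translation action of $\G$ on itself gives a $\G$-equivariant homeomorphism
\[
\Conf_m(\G)\;\xrightarrow{\sim}\;\G\times\Conf_{m-1}(\G\setminus\{e\}),\qquad (x_1,\dots,x_m)\mapsto(x_1,x_1^{-1}x_2,\dots,x_1^{-1}x_m),
\]
under which $\pi$ becomes the trivial product of $\operatorname{id}_\G$ with the analogous Fadell--Neuwirth fibration on the open connected manifold $\G\setminus\{e\}$. A section of $\pi$ is thus equivalent to a continuous avoidance map $\Conf_\ell(\G\setminus\{e\})\to\G\setminus\{e\}$ returning a point distinct from each of its $\ell$ arguments.

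When $\G$ has a noncompact factor -- so $\G\cong(S^1)^d\times\R^v$ with $v>0$, covering the linear and toric cases -- we would build the avoidance map by pushing a new point to infinity: fix a projection $\pi_{\R}\colon\G\to\R$ onto an $\R$-factor and return the point whose $\R$-coordinate is strictly larger than $\max_i\pi_{\R}(x_i)$, with all remaining coordinates fixed. The main obstacle is the purely compact case $\G=(S^1)^d$, relevant to the elliptic setting, where no such push to infinity is available. In that case we would iterate the translation-reduction argument on the punctured torus $(S^1)^d\setminus\{e\}$, or invoke classical results on configuration spaces of Lie groups, to finish the construction and thereby the proof.
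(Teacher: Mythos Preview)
Your reduction via \Cref{thm:FN} to a section of the Fadell--Neuwirth bundle, and your passage from $\Conf_{m}(\G)$ to $\G\times\Conf_{m-1}(\G\setminus\{e\})$, match the paper exactly. Your ``push past the maximum'' section in the noncompact case $v>0$ is a perfectly good variant of what the paper does there.

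The gap is the compact case $\G=(S^1)^d$. You acknowledge this as ``the main obstacle'' and then do not actually resolve it: ``iterate the translation-reduction argument on the punctured torus'' is not a well-defined step, since $(S^1)^d\setminus\{e\}$ is not a group and the translation trick does not recurse; and ``invoke classical results'' is not a proof. The paper closes this gap with a single concrete construction that works uniformly for every connected abelian Lie group $\G$: having removed the identity, one pushes the new point \emph{toward} the puncture rather than to infinity. Concretely, given $(x_1,\dots,x_{\ell-1})\in\Conf_{\ell-1}(\G\setminus\{0\})$, let $\mu$ be the minimum distance from $0$ among the $x_i$, fix a coordinate direction, and set $x_\ell$ to be the point at distance $\mu/2$ from $0$ in that direction. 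Then $x_\ell\neq 0$ and $x_\ell\neq x_i$ for all $i$, and this varies continuously. No compactness hypothesis is needed, so the elliptic case is handled by exactly the same formula.
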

\begin{proof}
By \Cref{thm:FN} it suffices to prove that the  bundle $\Conf_{\ell+1}(\G)\to\Conf_\ell(\G)$ has a section. For $v>0$ this follows from \cite[Example 4.1]{cohen-config}.
For the general case, note that this bundle is isomorphic to the bundle $\pi\times\operatorname{id}:\Conf_{\ell}(\G\setminus0)\times\G \to \Conf_{\ell-1}(\G\setminus0)\times\G$ (see eg. \cite[Theorem 4]{FN}). In order to obtain a section of $\pi$, fix a coordinate direction in $\G$. Given $x=(x_1,\ldots,x_{\ell-1})\in\Conf_{\ell-1}(\G\setminus 0)$ let $\mu(x)$ be the minimum of the (finite) set of (nonzero) distances of the $x_i$ from $0$ in $\G$. Then a section of $\pi$ is obtained by mapping $x$ to $(x_1,\ldots,x_{\ell-1},x_\ell)$, where $x_\ell$ is the point of $\G$ at distance $\frac{\mu(x)}{2}$ from $0$ in the chosen coordinate direction.
\end{proof}

\begin{corollary}\label{cor:pi1}
The fundamental group of a strictly supersolvable linear, toric, or elliptic arrangement has the structure of an iterated semidirect product of free groups.
\end{corollary}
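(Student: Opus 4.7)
The plan is to induct on the rank $n$ of $\A$. Since strict supersolvability implies supersolvability, \Cref{thm:ft=ss} produces an admissible subgroup $Y$ such that $\P(\A_Y)$ is a corank-one TM-ideal in $\P(\A)$, and the projection restricts to a fiber bundle $\pr\colon M(\A)\to M(\qA)$ with fiber $F$ homeomorphic to $\G$ minus finitely many points. The TM-ideal property is inherited through the chain of \Cref{def:sss}, so $\qA$ is again a strictly supersolvable arrangement of rank $n-1$ to which the inductive hypothesis applies.

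The base case $n=1$ is straightforward: because $\A$ is essential, $M(\A)$ is $\G$ with at least one point removed. For $\G=\C$, $\G=\C^\times$, or $\G$ an elliptic curve, this open surface has free fundamental group, which counts as the degenerate iterated semidirect product.

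For the inductive step, the key input is \Cref{cor:section}: since $\P(\A_Y)$ is a TM-ideal, the bundle $\pr$ admits a continuous section $s\colon M(\qA)\to M(\A)$. By \Cref{cor:kpi1}, $M(\qA)$ is a $K(\pi,1)$, so $\pi_2(M(\qA))=0$ and the long exact sequence in homotopy collapses to the split short exact sequence
\[
1 \longrightarrow \pi_1(F) \longrightarrow \pi_1(M(\A)) \longrightarrow \pi_1(M(\qA)) \longrightarrow 1,
\]
with splitting induced by $s_*$. Hence $\pi_1(M(\A)) \cong \pi_1(F) \rtimes \pi_1(M(\qA))$. The fiber $F$ is a punctured $\G$, so $\pi_1(F)$ is free by the base-case argument, and by induction $\pi_1(M(\qA))$ is already an iterated semidirect product of free groups; adjoining the new free factor preserves this structure.

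The main subtlety I anticipate is the bookkeeping when the chain of TM-ideals is passed downward. Once $\P(\A_Y)$ is identified as the topmost TM-ideal $\Q_{n-1}$, one must check that the remaining chain $\Q_0\subset\cdots\subset \Q_{n-1}$ still consists of TM-ideals when the defining conditions of \Cref{def:TM} are computed inside the smaller poset $\P(\A_Y)$ rather than inside the ambient $\P(\A)$. This should reduce to the fact that $\P(\A_Y)$ is a join-closed order ideal: joins computed in $\P(\A_Y)$ agree with those in $\P(\A)$ whenever the arguments lie in $\P(\A_Y)$, so uniqueness of minimal upper bounds upstairs automatically implies uniqueness downstairs, and the modularity condition \Cref{def:TM:2} restricts without trouble.
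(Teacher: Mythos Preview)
Your proof is correct and follows essentially the same approach as the paper: induct on rank, use \Cref{cor:kpi1} to collapse the homotopy long exact sequence to a short exact sequence of fundamental groups, and use \Cref{cor:section} to split it. Your additional remarks on the inductive bookkeeping (identifying $\P(\A_Y)$ with $\Q_{n-1}$ via the proof of \Cref{thm:ft=ss}, and verifying that the truncated chain of TM-ideals persists in $\P(\A_Y)$) are details the paper leaves implicit, but they are straightforward and your explanation is accurate.
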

\begin{proof}
We proceed by induction on the rank of $\Gamma$. If $\rk(\Gamma)=1$, then $M(\A)\cong\G\setminus\{\text{finitely many points}\}$ with $\G=\C$, $\C^\times$, or $(S^1)^2$, hence the fundamental group of $M(\A)$ is isomorphic to a free group.
Now for $\rk(\Gamma)>1$, there is a strictly supersolvable arrangement $\B$ for which $M(\A)\to M(\B)$ is a fiber bundle with $F\cong\G\setminus\{\text{finitely many points}\}$. Using the homotopy long exact sequence for the fibration and \Cref{cor:kpi1}, we have a short exact sequence of fundamental groups
\[ 0 \to \pi_1(F) \to \pi_1(M(\A)) \to \pi_1(M(\B)) \to 0.\]
\Cref{cor:section} implies that this short exact sequence is split, and hence \[\pi_1(M(\A))\cong\pi_1(F)\rtimes\pi_1(M(\B)).\] Since $\pi_1(F)$ is a free group the result follows by induction.
\end{proof}

\begin{lemma}\label{map_pi1}
Assume that $\G\cong (S^1)^d\times\R^v$ and $v>0$.
The inclusion $\iota: M(\A)\to T$ induces a surjective map of fundamental groups $\iota^*:\pi_1(M(\A))\to\pi_1 (T)$.
\end{lemma}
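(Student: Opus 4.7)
The plan is to deduce surjectivity of $\iota^*$ from a codimension/transversality argument, using the hypothesis $v > 0$ to force $T \setminus M(\A)$ to have real codimension at least two whenever $\pi_1(T)$ is nontrivial. Identifying $T = \Hom(\Gamma,\G) \cong \G^n$ with $n = \rk \Gamma$, one has $\pi_1(T) \cong \Z^{dn}$, and each $H_\alpha$ is a closed Lie subgroup of $T$ of real codimension $\dim_{\R}\G = d+v$, so $T \setminus M(\A)$ is a finite union of closed submanifolds, each of real codimension $d+v$.

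I would then split into cases. If $d=0$, then $T \cong \R^{vn}$ is contractible, so $\pi_1(T) = 0$ and surjectivity is automatic. Otherwise $d \geq 1$, which combined with $v > 0$ gives $d+v \geq 2$, so every component of $T \setminus M(\A)$ has real codimension at least two in $T$. Pick a basepoint $t_0 \in M(\A)$ (which exists since $M(\A)$ is open and nonempty in $T$). Given a class $[\gamma] \in \pi_1(T, t_0)$ represented by a smooth loop $\gamma\colon S^1 \to T$, I would apply the smooth transversality theorem rel basepoint to produce a nearby loop $\gamma'$, smoothly homotopic to $\gamma$ in $T$ rel $t_0$, that is transverse to each of the finitely many $H_\alpha$. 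The expected dimension of such a transverse intersection is
\[
1 + (d+v)(n-1) - (d+v)n \;=\; 1 - (d+v) \;<\; 0,
\]
so $\gamma'$ is disjoint from $\bigcup_\alpha H_\alpha$, hence lies entirely in $M(\A)$ and satisfies $\iota^*[\gamma'] = [\gamma]$.

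The main technical input is a standard rel-basepoint smooth transversality statement (e.g.\ as in Guillemin--Pollack or Hirsch), guaranteeing that the perturbation can be chosen to fix $t_0$ and to remain within $T$; everything else is an elementary codimension count. The hypothesis $v > 0$ is used precisely to guarantee $d+v \geq 2$ in the case $d \geq 1$, which is the exact threshold at which a generic $1$-dimensional loop misses a codimension-$(d+v)$ subset.
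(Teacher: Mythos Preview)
Your argument is correct. The transversality approach is standard and sound: the codimension count is right (each component of $H_\alpha$ is a closed submanifold of $T$ of real codimension $d+v$), the case split on $d=0$ versus $d\geq 1$ is exactly what is needed, and rel-basepoint transversality applies since the basepoint already lies off every $H_\alpha$. One minor imprecision: $H_\alpha$ need not be connected (if $\alpha$ is a nontrivial multiple of a primitive element it has $k^d$ components), but each component still has codimension $d+v$ and the transversality argument goes through component-by-component.

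The paper takes a different and more explicit route. It uses the product decomposition $T\cong (S^1)^{dn}\times \R^{vn}$ and observes that for any point $r$ in the complement $M(\A_\R)$ of the associated real arrangement in $\R^{vn}$, the entire torus fiber $\pi_\R^{-1}(r)\cong (S^1)^{dn}$ sits inside $M(\A)$; since the inclusion of this fiber into $T$ is a homotopy equivalence, its factorization through $M(\A)$ forces $\iota^*$ to be surjective. Compared with yours, this avoids any differential-topology input and in fact produces a genuine subspace of $M(\A)$ carrying all of $\pi_1(T)$, which is a slightly stronger statement. Your approach, on the other hand, is insensitive to the specific Lie-group structure and would apply verbatim to any finite collection of closed codimension-$\geq 2$ submanifolds, so it is the more robust of the two.
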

\begin{proof}
The product structure of $\G$ induces a decomposition \[\Hom(\Gamma,\G)=\Hom(\Gamma,(S^1)^d)\times\Hom(\Gamma,\R^{v}).\]
Thus, every $t\in \Hom(\Gamma,\G)$ is a pair $t=(z, x)$ with $z\in \Hom(\Gamma,(S^1)^d)$ and $x\in \Hom(\Gamma,\R^v)$. 
Now $\alpha\in \Gamma$ is in $\ker(t)$ if and only if $\alpha\in \ker(z)\cap\ker(x)$.
Let $M(\A_\R)$ denote the complement of the arrangement defined by $\A$ in $\Hom(\Gamma,\R^v)\simeq \R^{vn}$. Let $r\in M(\A_\R)$. Then for all $\alpha\in \A$ we have $\alpha\not\in\ker(r:\Gamma\to\R^v)$  and so $\alpha\not\in\ker((z,r):\Gamma\to(S^1)^d\times\R^v)$ for every $z\in \Hom(\Gamma,(S^1)^d)\simeq (S^1)^{dn}$. 
  Thus $\pi_\R^{-1}(r)\subseteq M(\A)$, where $\pi_\R$ denotes the natural projection $T\to \R^{vn}$ induced by the projection $\G\to\R^v$.
By definition of $\pi_\R$, we have $T\simeq \pi_\R^{-1}(r)\times\R^{vn}$, hence the inclusion $j:\pi_\R^{-1}(r)\hookrightarrow T$ induces an isomorphism of fundamental groups. Since $j$ factors as a composition of inclusions $T_c\hookrightarrow M(\A)\hookrightarrow T$, the homomorphism $\iota^*:\pi_1(M(\A))\to \pi_1(T)$ is surjective as claimed. 
\end{proof}

\begin{theorem}\label{thm:monodromy}
Assume that $\G\cong(S^1)^d\times\R^v$ and $v>0$. Let $Y$ be an admissible subgroup of $T$ for which $M(\A)\to M(\qA)$ is a fiber bundle with fiber $F$. If the monodromy action of $\pi_1(M(\qA))$ on $H^*(F;\Z)$ is trivial, then $\P(A_Y)$ is a TM-ideal of $\P(\A)$. Consequently, the conclusions of \Cref{thm:FN} and \Cref{cor:section} hold, as well as the following tensor decomposition of vector spaces:
\[H^*(M(\A);\QQ) \cong H^*(M(\qA);\QQ)\otimes H^*(F;\QQ).\]
\end{theorem}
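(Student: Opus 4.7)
The strategy is to handle the three claims of the theorem sequentially: first the TM-ideal property of $\P(\A_Y)$, then the existence of the Fadell--Neuwirth pullback and a section (which follow immediately from earlier results), and finally the tensor decomposition of rational cohomology.

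For the first claim I use \Cref{rem:TM}: since the M-ideal property of $\P(\A_Y)$ is equivalent, by \Cref{thm:fib}, to the existence of the bundle, upgrading it to a TM-ideal amounts to showing that $Y'\cap H$ is connected for every $Y'\in\tr{Y}$ and every $H\in A(\A\setminus\A_Y)$. By \Cref{cor:cover}, each restricted projection $p|_H\colon H\to T/Y$ is a covering map whose degree $d_H$ coincides with $|Y'\cap H|$, so it suffices to show $d_H=1$ for every such $H$. I argue by contraposition: suppose $d_H\geq 2$ for some $H$. Since $H$ is connected, the monodromy action of $\pi_1(T/Y)$ on the covering fiber $(u+Y)\cap H$ is transitive, hence nontrivial. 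Because $v>0$, \Cref{map_pi1} applied to $\qA$ inside $T/Y$ yields that $\pi_1(M(\qA))\to\pi_1(T/Y)$ is surjective; consequently the monodromy of the bundle $M(\A)\to M(\qA)$ acts on the fiber $F\cong \G\setminus\{\ell\text{ points}\}$ by a nontrivial permutation of the $d_H$ punctures associated to $H$. To conclude that the induced action on $H^*(F;\Z)$ is nontrivial, I appeal to the long exact sequence of the pair $(\G,F)$: the hypothesis $v>0$ forces $H^{d+v}(\G;\Z)=0$, and excision produces a surjection
\[
H^{d+v-1}(F;\Z)\twoheadrightarrow H^{d+v}(\G,F;\Z)\cong\Z^\ell
\]
whose target is spanned by Thom classes indexed by the punctures; these are permuted nontrivially, contradicting triviality of the monodromy on $H^*(F;\Z)$.

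Once the TM-ideal property is in hand, \Cref{thm:FN} supplies the Fadell--Neuwirth pullback diagram and \Cref{cor:section} the section, so only the cohomology isomorphism remains. For this I plan to use the Serre spectral sequence of $p\colon M(\A)\to M(\qA)$. Under the trivial monodromy hypothesis its $E_2$ page has the form
\[
E_2^{s,t}\cong H^s(M(\qA);\QQ)\otimes H^t(F;\QQ),
\]
so it suffices to establish collapse at $E_2$. I propose to deduce this from the analogous collapse for the Fadell--Neuwirth bundle $\Conf_{\ell+1}(\G)\to\Conf_\ell(\G)$: that bundle satisfies the Leray--Hirsch condition by way of the standard classes $\omega_{i,\ell+1}\in H^*(\Conf_{\ell+1}(\G);\QQ)$ arising from the projections $(x_1,\ldots,x_{\ell+1})\mapsto x_{\ell+1}-x_i\in\G\setminus\{0\}$, whose restrictions to any fiber generate $H^*(F;\QQ)$ as an algebra. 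By naturality of the Serre spectral sequence under pullback, collapse is inherited by the bundle $p$, yielding the desired tensor decomposition as graded vector spaces.

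The main obstacle is the careful identification, in the first step, of the bundle monodromy with the monodromies of the coverings $p|_H$: one must verify that the permutation of punctures in $F$ produced by a loop in $M(\qA)$ is determined, component by component, by the covering monodromy on each $H\in A(\A\setminus\A_Y)$, and then that such a nontrivial permutation induces a nontrivial action on $H^*(F;\Z)$. Once this identification is in place the long-exact-sequence argument is direct. A lesser concern is confirming the Leray--Hirsch hypothesis for the Fadell--Neuwirth bundle with general fiber $\G\setminus\{\ell\text{ points}\}$; for the applications envisioned in this paper it is enough to treat the hyperplane, toric, and elliptic cases individually.
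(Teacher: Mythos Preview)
Your argument for the TM-ideal claim is essentially the paper's: both argue by contraposition, produce a loop in $T/Y$ whose monodromy permutes two punctures of $F$ nontrivially (you phrase this via the covering $p|_H$ of \Cref{cor:cover}, the paper via an explicit path in $H$), lift it to $\pi_1(M(\qA))$ using \Cref{map_pi1}, and detect the resulting nontriviality on $H^{d+v-1}(F)$ through the long exact sequence of the pair $(Y,F)\cong(\G,F)$. The identification you flag as the ``main obstacle'' is handled in the paper exactly as you outline.

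For the tensor decomposition your route diverges from the paper's. The paper observes that trivial monodromy makes the $E_2$ page of the Serre spectral sequence equal to the tensor product, and then forces collapse by a dimension count: the one-step factorization $\chi_{\P(\A)}(t)=\chi_{\P(\A_Y)}(t)\,(t-a)$ of \Cref{thm:charpoly}, combined with the Liu--Tran--Yoshinaga formula used in the proof of \Cref{cor:poincare}, shows that the total dimension of $E_2$ already equals $\dim H^*(M(\A);\QQ)$, so all differentials vanish. Your Leray--Hirsch argument via pullback from the Fadell--Neuwirth bundle is a legitimate alternative; it avoids the external Betti-number formula and even upgrades the conclusion to an isomorphism of $H^*(M(\qA);\QQ)$-modules rather than graded vector spaces. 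One correction: the classes $\omega_{i,\ell+1}$ alone do not restrict to algebra generators of $H^*(F;\QQ)$ when $d>0$; you also need the pullbacks of a basis of $H^*(\G;\QQ)$ along the last-coordinate projection $\Conf_{\ell+1}(\G)\to\G$ to cover the torus factor of the fiber. With that amendment the Leray--Hirsch hypothesis is satisfied and your pullback argument goes through. (The elliptic case you list at the end is excluded here by the hypothesis $v>0$.)
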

\begin{proof}
The conclusions of \Cref{thm:FN} and \Cref{cor:section} will hold as long as $\P(\A_Y)$ is a TM-ideal of $\P(\A)$.
The triviality of the monodromy action implies that the $E_2$ term of the Serre spectral sequence of the fiber bundle $M(\A)\to M(\qA)$ can be expressed as the above tensor product. Since this tensor product has Hilbert series equal to that of $H^*(M(\A);\QQ)$ via \Cref{cor:poincare}, the differentials must be trivial.
It remains to prove that if the monodromy action is trivial, then $\P(\A_Y)$ is a TM-ideal of $\P(\A)$.

By \Cref{lem:calpha} and \Cref{rem:TM}, if 
$\P(\A_Y)$ is not a TM-ideal, 
then there is $H\in \A\setminus \A_Y$ such that $H\cap Y$ is disconnected, hence it contains two 
points $y_0,y_1$ in distinct connected components. Since $H$ is path-connected, we can let $\gamma$ denote a path from $y_0$ to $y_1$ in $H$. Then, $p(\gamma)$ is a closed path in $T/Y$. The element of $\pi_1(T/Y)$ defined by the class of $p(\gamma)$ determines a continuous map $f:Y\to Y$ with $f(y_0)=y_1$ (by uniqueness of lifting). 

The inclusions $\iota: M(\qA) \hookrightarrow T/Y$ and $M(\A) \hookrightarrow T$ define a bundle map from the fibration $M(\A)\to M(\qA)$ with fiber $F$  
to the trivial fibration $T\to T/Y$ with fiber $Y$.

By \Cref{map_pi1}, there is a closed path $\gamma'$ in $M(\qA)$ whose homotopy class maps to the class of $p(\gamma)$ under $\iota^*$. The element of $\pi_1(M(\qA))$ defined by $\gamma'$ acts on $F$ via $f':=f_{\vert F}$, the restriction of $f$. Now it is enough to prove that $f'$ acts nontrivially on the cohomology of $F$.

Write $k:=\dim \G$.  The pair $(Y,F)$ has the (co)homology of a wedge of $k$-spheres, one for each puncture of $F$. Thus we can consider the generators $e_0,e_1$ of $H^k(Y,F)$ corresponding to $y_0,y_1$. The map $g$ induced by $f$ on  $H^{k}(Y,F)$ satisfies $g(e_0)={e_1}$.  
The functions $f, f', g$ define the following automorphism of the long exact sequence of the pair $(Y,F)$. (Note that $v>0$ implies that the homological dimension of $\G$ -- hence of $Y$ -- is strictly less than $k$.)

\begin{center}
\begin{tikzcd}
\cdots \arrow[r] 
& H^{k-1}(F) \arrow[r, "\partial"]\arrow[d, "(f')^*"] & 
H^{k}(Y,F) \arrow[r] \arrow[d, "g"] & 
H^k (Y) = 0 \arrow[r]\arrow[d]  &\cdots \\
\cdots \arrow[r] 
& H^{k-1}(F) \arrow[r] & H^{k}(Y,F) \arrow[r] & H^k (Y) = 0 \arrow[r] &\cdots \\
\end{tikzcd} 
\end{center}
Since $e_0\in H^{k}(Y,F)$ is in the kernel of the differential, there is $a\in H^{k-1}(F)$ with $\partial(a)=e_0$. Now  $(f')^*(a)$ must map under $\partial$ to $e_1=g(e_0)$. Since $e_0\neq e_1$, we have $a\neq (f')^*(a)$, showing that the action of $\gamma'\in\pi_1(\qA)$ on $H^{k-1}(F)$ is nontrivial.
\end{proof}

\begin{remark}\label{rmk:toricmonodromy}
Let $\G=S^1\times\R$. Then $\Conf_\ell(\G)$ is equal to the complement of a fiber-type hyperplane arrangement, namely the arrangement in $\C^\ell$ containing all diagonal and coordinate hyperplanes. As such, the monodromy action of $\pi_1(\Conf_{\ell-1}(\G))$ on the cohomology of the fiber $\G\setminus\{\ell-1\text{ points}\}$ is trivial \cite[Proposition 2.5]{FR}.
Using \Cref{thm:monodromy}, this implies a converse of \Cref{thm:FN}: {\em whenever a toric arrangement bundle $M(\A)\to M(\qA)$ is pulled back from a configuration space bundle $\Conf_{\ell}(\G)\to \Conf_{\ell-1}(\G)$, $\P(\A_Y)$ is a TM-ideal of $\P(\A)$.}
\end{remark}

\begin{question}
\label{q:monodromy}
Let $\G$ be any connected abelian Lie group and $\Gamma$ a finitely generated free abelian group. Let $\A$ be an arrangement in $T=\Hom(\Gamma,\G)$, and let  $Y$ be an admissible subgroup of $T$ for which $M(\A)\to M(\qA)$ is a fiber bundle with fiber $F$. What is the monodromy action of $\pi_1(M(\qA))$ on the cohomology of the fiber $F$?
\end{question}

We conclude with a formula relating the cohomology of a fiber-type toric arrangement with the lower central series of its fundamental group. For this, recall the lower central series of a group.

\begin{definition}\label{def:LCS}
Let $\gp$ be a group. The \defh{lower central series} of $\gp$ is defined as $\gp=\gp_1\supseteq\gp_2\subseteq\gp_3\supseteq\cdots$ where $\gp_{j+1}=[\gp_j,\gp]$ for all $j\geq1$.
For each $j$, let $\gp(j):=\gp_j/\gp_{j-1}$.
When $\gp$ is a finitely generated group, each $\gp(j)$ is a finitely-generated abelian group \cite[Theorem 5.4]{MKS}. Then, we let $\varphi_j(\gp)$ denote the rank of $\gp(j)$ as an abelian group.
\end{definition}
Note that $\gp(1)$ is the abelianization of $\gp$. In particular, if $X$ is a topological space and $\gp=\pi_1(X)$ is finitely generated, then $\gp(1)\cong H_1(X)$ and $\varphi_1$ is the first Betti number of $X$.

\begin{theorem}\label{thm:LCS}
Let $\A$ be a strictly supersolvable toric arrangement via the chain of TM-ideals $\P(\A)=\Q_n\supset\Q_{n-1}\supset\cdots\supset\Q_1\supset\Q_0=\zero$. 
Let $a_i=|A(\Q_i)\setminus A(\Q_{i-1})|$ for each $i=1,2,\dots,n$.
Let $\gp=\pi_1(M(\A))$ be the fundamental group of the arrangement complement, and for each $j\geq 1$ abbreviate $\varphi_j=\varphi_j(\gp)$.
Then each $\gp(j)$ is a free abelian group and 
\[ \prod_{j=1}^\infty (1-t^j)^{\varphi_j}
= \prod_{i=1}^n (1-(a_i+1)t),\]
which by \Cref{cor:poincare} is equal to the Poincar\'e polynomial of $M(\A)$.
\end{theorem}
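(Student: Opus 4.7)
The plan is to reassemble, in order: (i) the tower of fibrations, (ii) the splittings and trivial monodromy coming from the TM-ideal hypothesis, (iii) the Falk--Randell theorem on lower central series of almost direct products, and (iv) Witt's formula for free groups.

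Strict supersolvability together with \Cref{thm:ft=ss} and iteration of the Fibration Theorem \Cref{thm:fib} supplies a tower of fiber bundles
\[ M(\A) = M(\B_n) \longrightarrow M(\B_{n-1}) \longrightarrow \cdots \longrightarrow M(\B_1), \]
in which the fiber at level $i$ is $\C^\times$ with $a_i$ points removed (using \Cref{lem:tgood} together with \Cref{def:TM:1} to count the punctures); equivalently, the fiber is homotopy equivalent to a wedge of $a_i+1$ circles, and its fundamental group is a free group of rank $a_i+1$. By \Cref{cor:section} the TM-ideal hypothesis furnishes a section of each bundle, so the homotopy long exact sequence yields a split short exact sequence of fundamental groups at each level. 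Moreover, by \Cref{rmk:toricmonodromy} the monodromy action on the cohomology of each fiber is trivial; in particular, the action on the abelianization of the free kernel is trivial, so each split extension is an \emph{almost direct product} in the Falk--Randell sense.

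I would then invoke the Falk--Randell lemma for almost direct products: if $G = H \rtimes K$ with $K$ acting trivially on $H^{\mathrm{ab}}$, then for every $j \geq 1$ the $j$th lower central series quotient $G(j)$ decomposes as $H(j) \oplus K(j)$. Iterating this down the tower gives a direct-sum decomposition of $\gp(j)$ into the LCS quotients of the free groups $\pi_1(F_i)$. Since LCS quotients of free groups are free abelian (Magnus--Witt), this proves the first assertion of the theorem; passing to ranks and combining with Witt's identity $\prod_j(1-t^j)^{\varphi_j(\text{free of rank }k)} = 1 - kt$ gives
\[ \prod_{j=1}^\infty (1-t^j)^{\varphi_j} \;=\; \prod_{i=1}^n \prod_{j=1}^\infty (1-t^j)^{\varphi_j(\pi_1(F_i))} \;=\; \prod_{i=1}^n \bigl(1 - (a_i+1)t\bigr). \]
The identification of the right-hand side with (a substitution of) the Poincar\'e polynomial then follows from \Cref{cor:poincare} with $d = v = 1$.

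The main obstacle is mostly bookkeeping: every topological input — tower of fibrations, section, trivial monodromy — has already been established, and the Falk--Randell almost-direct-product lemma and the Magnus--Witt/Witt facts for free groups are classical. The one conceptual point worth highlighting is that strict supersolvability is genuinely needed rather than mere supersolvability, since \Cref{cor:section} requires TM-ideals and, by \Cref{thm:monodromy}, cohomological triviality of the monodromy in fact \emph{forces} the ideal to be a TM-ideal, so this hypothesis cannot be relaxed.
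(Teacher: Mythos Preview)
Your proof is correct and follows essentially the same strategy as the paper's: both use the tower of split fibrations furnished by \Cref{cor:section}, apply the Falk--Randell almost-direct-product decomposition of the lower central series quotients, and finish with Witt's identity for free groups. If anything, you are slightly more explicit than the paper in invoking \Cref{rmk:toricmonodromy} to verify the trivial-action hypothesis needed for the Falk--Randell lemma, whereas the paper simply cites their Corollary~3.6 after establishing the split short exact sequence and otherwise packages the argument as a straightforward induction on $n$.
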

\begin{proof}
We proceed by induction on $n$, with both the base case and the inductive step relying on the following identity from \cite[p. 330 and Corollary 5.12.(iv)]{MKS}. Let $\mathfrak{F}_r$ be the free group on $r$ generators, and let $N_{r,j} = \varphi_j(\mathfrak{F}_r)$. Then 
\begin{equation}\label{eq:MKS}\tag{$\star$}
\prod_{j=1}^\infty (1-t^j)^{N_{r,j}} = 1-rt.
\end{equation}

Now, our base case is an arrangement of $a_1$ points in $\C^\times$, whose complement is the complement of $a_1+1$ points in $\C$.
Thus, the fundamental group is a free group on $r=a_1+1$ generators, for which the groups $\mathfrak{F}_r(j)$ are free abelian and \eqref{eq:MKS} is precisely the desired formula.

Now for $n>1$, let $Y\in\P(\A)$ be the subgroup for which $\Q_{n-1}=\P(\A_Y)$. 
Let $F$ be the fiber of the bundle $M(\A)\to M(\qA)$, so that $\pi_1(F)\cong\mathfrak{F}_r$ with $r=a_n+1$.
By \Cref{cor:kpi1} and \Cref{cor:section}, we have a split short exact sequence of fundamental groups 
\begin{center}
\begin{tikzcd}
1 \arrow[r]
& \pi_1(F) \arrow[r]
& \pi_1(M(\A)) \arrow[shift left,r]
& \pi_1(M(\qA)) \arrow[shift left,l] \arrow[r]
& 1 
\end{tikzcd} 
\end{center}
Abbreviate $\gp=\pi_1(M(\A))$, $\varphi_j=\varphi_j(\gp)$, $\gp'=\pi_1(M(\qA))$, and $\varphi'_j=\varphi_j(\gp')$.
Then \cite[Corollary 3.6]{FR} implies
$\gp(j)\cong\gp'(j)\oplus\mathfrak{F}_r(j)$ for every $j\geq 1$.
By our inductive hypothesis, each $\gp(j)$ is a direct sum of free abelian groups, hence it is also free abelian. 
This direct sum decomposition also implies
$\varphi_j=\varphi'_j+N_{r,j}$ for every $j\geq 1$, thus
\[
\prod_{j=1}^\infty (1-t^j)^{\varphi_j}
= \prod_{j=1}^\infty(1-t^j)^{\varphi'_j}
\prod_{j=1}^\infty(1-t^j)^{N_{r,j}}
\]
which by induction and \eqref{eq:MKS} is equal to
\[\prod_{i=1}^{n-1}(1-(a_i+1)t)(1-(a_n+1)t).\]
\end{proof}

%\bibliographystyle{alpha}
%\bibliography{supersolvable}

\end{document}